\newcommand{\defn}[1]{{\em #1}\index{#1}}
\newcommand\comment[1]{\hspace{-8px}{\color{purple}{\bf *}}}
\theoremstyle{definition} 
\newtheorem{thm}{Theorem}[section]
\newtheorem{definition}[thm]{Definition}
\newtheorem{lem}[thm]{Lemma}
\newtheorem{cor}[thm]{Corollary}
\newtheorem{prop}[thm]{Proposition}
\newtheorem{remark}[thm]{Remark}
\newcommand\norm[1]{\| #1 \|}
\newcommand\abs[1]{\left\lvert #1 \right\rvert}
\newcommand\pmat[1]{\begin{pmatrix}#1\end{pmatrix}} 
\newcommand\smat[1]{\left(\begin{smallmatrix}#1\end{smallmatrix}\right)} 
\newcommand\isom{\cong}
\def\<{\langle}
\def\>{\rangle} 
\newcommand\floor[1]{\lfloor #1 \rfloor}
\newcommand\ceil[1]{\lceil #1 \rceil}
\DeclareMathOperator\id{id}
\DeclareMathOperator\im{im}
\DeclareMathOperator\ind{ind}
\DeclareMathOperator\supp{supp}
\DeclareMathOperator\Span{span}
\DeclareMathOperator\aut{aut}
\def\bR{\mathbb{R}}
\def\bN{\mathbb{N}}
\def\bZ{\mathbb{Z}}
\def\bI{\mathbb{I}}
\def\cA{\mathcal{A}}
\def\cC{\mathcal{C}}
\def\cL{\mathcal{L}}
\def\cM{\mathcal{M}}
\def\cP{\mathcal{P}}
\def\cR{\mathcal{R}}
\def\cS{\mathcal{S}}
\def\cT{\mathcal{T}}
\def\cV{\mathcal{V}}
\def\cZ{\mathcal{Z}}
\def\brho{{\bar\rho}}
\def\blambda{{\bar\lambda}}
\def\ab{{\alpha,\beta}}
\def\bcP{{\bar\cP}}
\def\bcR{{\bar\cR}}
\newcommand{\ft}{{\mathfrak t}}
\newcommand{\case}[1] { \vspace{.1in} \noindent {\em Case #1\/}}
\newcommand{\step}[1] { \vspace{.1in} \noindent {\em Step #1\/}}
\begin{document}
\title{Combinatorial embedded contact homology for toric contact manifolds}
\author{Keon Choi\footnote{Supported by ERC advanced grant LDTBud}}
\date{}
\maketitle

\begin{abstract}
Computing embedded contact homology (ECH) and related invariants of certain toric 3-manifolds (in the sense of Lerman \cite{lerman}) has led to interesting new results in the study of symplectic embeddings \cite{cfghr,concconv,qech}. Here, we give a combinatorial formulation of ECH chain complexes for general toric contact 3-manifolds. As a corollary, we prove Conjecture A.3 from \cite{beyond}.
\end{abstract}

\tableofcontents

\section{Introduction}\label{section:intro}
Embedded contact homology (ECH) is an invariant of a contact 3-manifold. The goal of this paper is to combinatorially describe ECH chain complexes (ECC) of certain contact manifolds. A combinatorial formulation of Heegaard Floer homology (which is isomorphic to ECH by \cite{cgh2,klt}) is given in \cite{sw} but understanding ECC itself is useful for studying contact geometric properties lost under this isomorphism: e.g. usage of ECH capacities \cite{cfghr,concconv,qech} and other obstructions \cite{beyond} to symplectic embeddings.

In \cite{t3}, Hutchings and Sullivan introduced ``polygonal paths'' and ``rounding corners'' to describe the generators and differentials of ECC for $(T^3,\lambda_n)$ where $\lambda_n:=\cos (2\pi nx) dt_1+ \sin (2\pi nx) dt_1$. We extend this result to all toric contact 3-manifolds, that is, $(Y^3,\lambda)$ with a $\lambda$-preserving effective $T^2$-action. According to Lerman \cite{lerman}, such $Y$ admits the contact moment map $\mu_\lambda:Y \to (\ft^2)^*$ which factors through its orbital moment map $a_\lambda: Y/T^2 \to (\ft^2)^*$. If the action is free, $Y/T^2$ is homeomorphic to $\bR/\bZ$ and $Y$ is diffeomorphic to $T^3$. Otherwise, $Y/T^2$ is homeomorphic to $[0,1]$ and $Y$ is diffeomorphic to a lens space (including $S^1 \times S^2$). In either case, $Y$ contains as a dense open submanifold a principal $T^2$-bundle $Y^o :=(0,1) \times T^2$ and after re-identifying the fibres if necessary, $\lambda|_{Y^o} = \pi^*(a_\lambda|_{(0,1)})$ where $\pi:Y\to Y/T^2$.

Let $\bI=[0,1]$ and $Y=\bI \times T^2$ with the projection $\pi_\bI:Y \to \bI$. For any $a:\bI \to (\ft^2)^*=T^*T^2$, $\pi_\bI^*a:Y \to (\ft^2)^* \subset T^*Y$, considered as a 1-form, is contact if and only if $a \times a' > 0$ where $\times$ is the standard cross product on $(\ft^2)^* = \bR^2$. We call any such $a$ an (abstract) orbital moment map. Our main theorem describes $ECC(Y,\lambda,J)$ where $\lambda$ is a certain perturbation of $\pi_\bI^*a$ for a generic orbital moment map $a$ and $J$ is a certain generic $\lambda$-admissible almost complex structure on $\bR \times Y$. Recalling ECC is generated over $\bZ/2$ by admissible orbit sets of $\lambda$ and the differential $\partial$ counts ECH index 1 $J$-holomorphic curves (see \S \ref{section:ech}), we show:
\begin{thm}\label{thm:main}
Let $(Y,\lambda,J)$ be as above. For a pair $(\alpha,\beta)$ of admissible orbit sets of $\lambda$, $\< \partial \alpha, \beta\>  =1 \in \bZ/2$ if and only if the region $\cR_\ab$ associated to it can be written as a concatenation $\cT_1\cR' \cT_2$ where $\cT_i$ are trivial regions and $\cR'$ is a non-local, indecomposable, $a$-positive, minimally positive and almost minimally decorated region.
\end{thm}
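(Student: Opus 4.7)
The plan is to extend the template of Hutchings--Sullivan for $(T^3,\lambda_n)$ to all toric contact 3-manifolds by exploiting the $T^2$-invariance of $\pi_\bI^* a$. Since every fibre over $(0,1)$ of $\pi_\bI$ is foliated by Reeb orbits, I would first choose an explicit Morse--Bott perturbation $\lambda$ of $\pi_\bI^* a$, localized near the finitely many ``special fibres'' where $a'$ has rational slope (together with the degenerate fibres over $\{0,1\}$ when $Y$ is a lens space), so that admissible orbit sets correspond bijectively to polygonal paths drawn in the moment image $a(\bI)\subset(\ft^2)^*$. Simultaneously, I would choose a $\lambda$-admissible almost complex structure $J$ that is $T^2$-invariant away from the perturbation regions; then $J$-holomorphic curves project, via $\bR \times Y \to \bR \times (Y/T^2)$, to immersed curves whose combinatorics are controlled by the $T^2$-isotropy type of their asymptotic orbits.

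Next I would install the combinatorial dictionary. For a pair $(\alpha,\beta)$ the region $\cR_{\ab}$ is the planar region between the two polygonal paths, decorated by orbit multiplicity and elliptic/hyperbolic type at each vertex. Conversely, for an ECH index $1$ holomorphic curve $C$ from $\alpha$ to $\beta$, the $T^2$-invariance reduces $C$ (up to the perturbation regions) to its projection, and the ECH index formula decomposes into contributions that are each visible on this projection. I would then identify each combinatorial adjective with one piece of the index computation: $a$-positivity with compatibility between the projection's orientation and the direction of $a$; minimal positivity with the partition condition at positive ends; almost minimal decoration with the analogous condition at negative ends; indecomposability with the requirement that $C$ be connected modulo trivial cylinders; and non-locality with the separation of $\cT_1\cR'\cT_2$, ensuring that only the middle factor $\cR'$ contributes to the differential.

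The theorem then splits into two implications. For the forward direction, I would assume $\<\partial \alpha,\beta\>=1$ and unpack $I(C)=1$ together with the partition conditions of Hutchings to force the projection of $C$ into the stated concatenation form. For the reverse direction, starting from a decomposition $\cR_\ab = \cT_1 \cR' \cT_2$ with $\cR'$ satisfying the four listed properties, I would construct, via gluing from explicit local models at each ``rounded corner'' of the polygon, a $J$-holomorphic curve whose trivial pieces come from branched covers over $\cT_1,\cT_2$ and whose non-trivial piece is determined by $\cR'$. Uniqueness of the count modulo $2$ should follow from an obstruction calculation analogous to \cite{t3}, combined with an automatic transversality statement for the projected problem.

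The hardest step will be the analysis at the fibres over $\{0,1\}$ when $Y$ is a lens space, where the $T^2$-action degenerates to a circle action and the combinatorial path terminates. The perturbed Reeb dynamics there produce short elliptic orbits whose contributions to the Conley--Zehnder sum, the writhe estimate, and the partition conditions involve non-trivial isotropy data that have no analogue in the $T^3$ case of \cite{t3}. Verifying that ``minimally positive'' and ``almost minimally decorated'' correctly encode the behaviour at these endpoint orbits --- and in particular that the combinatorics then imply Conjecture A.3 of \cite{beyond} --- will require an explicit local model in cylindrical coordinates near each degenerate fibre, together with a compactness argument preventing symmetry-breaking curves from appearing under the perturbation of $J$.
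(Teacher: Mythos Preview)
Your proposal has two genuine gaps, one in each direction.

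\textbf{Reverse direction.} You propose to build the curve by ``gluing from explicit local models at each rounded corner'' and then verify the mod-$2$ count by an obstruction calculation. The paper does not attempt this, and for good reason: such a gluing would require obstruction-bundle analysis at every corner, which is exactly what the $T^3$ paper \cite{t3} avoids. Instead the paper proves existence by an \emph{algebraic} induction on a complexity $z(\cR_{\alpha,\beta})$ using $\partial^2=0$. First one shows (Proposition~3.12, via Lemma~3.11) that the count $\#(\cM_1(\alpha,\beta)/\bR)$ depends only on $\cR_{\alpha,\beta}$ and not on which $\cR$-adapted orbital moment map $a$ one uses; this deformation freedom lets one adjust $a$ so that the region sits in a convenient position. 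Then, given $\cR'$ as in the statement, one manufactures a slightly larger pair $(\tilde\alpha,\tilde\beta)$ so that in the expansion $\langle\partial^2\tilde\alpha,\tilde\beta\rangle = \sum_\gamma \langle\partial\tilde\alpha,\gamma\rangle\langle\partial\gamma,\tilde\beta\rangle = 0$ exactly two intermediate $\gamma$ survive (all others are killed by the induction hypothesis applied to regions of strictly smaller $z$), and one of the two surviving terms is $\langle\partial\alpha,\beta\rangle$ while the other is a base case. The base cases (Proposition~3.13) are bigons and single triangles, and their counts are read off directly from Taubes' classification of index-$1$ curves in $\bR\times S^2\times S^1$; no gluing of multiple pieces ever occurs.

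\textbf{Forward direction.} Unpacking $I(C)=1$ and the partition conditions gives you $a$-positivity (Lemma~3.5) and the splitting $\cT_1\cR'\cT_2$ with $\cR'$ indecomposable (Proposition~3.6), but it does \emph{not} by itself force $\cR'$ to be minimally positive and almost minimally decorated. There is a competing case, labelled (c2) in \S3.3, where the Morse--Bott index $\bar I(\cR')=1$ and $m^l(\cR')=3$; here an embedded $I=\operatorname{ind}=1$ curve is not excluded by index or partition considerations alone. The paper rules (c2) out by a Morse--Bott degeneration (Proposition~3.10): letting $(\lambda,J)\to(\pi_\bI^*a,\bar J)$, any limiting building $(\bar C^1,\ldots,\bar C^l)$ must satisfy the constraint $\Theta(\bar C^i)=\sum m_k^+\theta_k^+ - \sum m_k^-\theta_k^- = 0$ of Lemma~3.9 (which comes from the identity $dt_1\,dt_2 = ds\,dx$ on $\bar J$-complex lines). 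This pins down the $\theta$-values at the internal levels and is incompatible with the downward Morse flow of the perturbing functions $f_x$. The argument depends on the specific placement of the critical points $\theta_e,\theta_h$ of $f_x$ chosen in \S2.2; a generic $T^2$-invariant $J$ and the partition conditions do not see this.

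Two smaller points. Your dictionary is off: minimal positivity is not the partition condition at positive ends but the requirement that each slice class $\sigma_{\cR}(x)$ be primitive with $v_{\cP^i}(x)\times\sigma_\cR(x)\le 1$; almost minimal decoration is the statement that exactly one $e/h$ label differs from the minimal one. And Theorem~1.1 is stated for $Y=\bI\times T^2$; the degenerate fibres over $\{0,1\}$ that you flag as the hardest step belong to the separate lens-space theorem (Theorem~4.8 in \S4.2), not to the statement at hand.
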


Before giving precise definitions, it is useful to have in mind:
\begin{enumerate}[(a)]
\item The (rough) correspondence between: a trivial region and a trivial cylinder; an indecomposable region and an irreducible $J$-holomorphic curve; a concatenation of regions and a disjoint union of $J$-holomorphic curves; and $a$-positivity and intersection positivity.
\item The combinatorial ECH index of a non-local, indecomposable, $a$-positive region is non-negative. It is zero if and only if the region is minimally positive and minimally decorated.
\end{enumerate}

Since the Reeb vector field $\bar R$ of $\pi_\bI^*a$ at $(x,t_1,t_2) \in Y$ takes values in $\ker a'(x) \subset \ft^2 = T^{vert}_{(x,t_1,t_2)}Y$, an $S^1$-family $\brho_x$ of embedded orbits foliates $\{x\} \times T^2$ whenever $a'(x)$ is a multiple of an integral vector. The set of such $x$ is dense generically but a technical argument allows us to only consider $\brho_x$ containing orbits whose action is less than a fixed constant $L$ (See \S \ref{section:ech}.) Then, following Bourgeois \cite{mb}, we perturb $\pi_\bI^*a$ to $\lambda$, which has exactly two embedded orbits with action less than $L$ (one elliptic orbit $e_x$ and one positive hyperbolic orbit $h_x$) for each such $\brho_x$ (see \S \ref{section:mb}). In addition to $[e_x]=[h_x] \in H_1(T^2)$, $\brho_x$ has another important attribute:
\begin{definition}\label{def:convex}
We say $a$ is \defn{convex} at $x$ (or $\brho_x$ is \defn{convex}) if $a'(x) \times a''(x) >0$; $a$ is \defn{concave} at $x$ (or $\brho_x$ is \defn{concave}) if $a'(x) \times a''(x) <0$.
\end{definition}

In $(T^3,\lambda_n)$, $a_{\lambda_n}(x)= (\cos 2\pi nx,\sin 2\pi nx)$ so every $\brho_x$ is convex whereas in $(S^3,\lambda_{std})$, $a_{\lambda_{std}}(x) = (1-x,x)$ so no $\brho_x$ is convex or concave. By genericity of $a$, we assume every orbit of action less than $L$ is either convex or concave.
\begin{definition}\label{def:path}
Let $\Lambda\subset \ft^2$ be the kernel of the exponential map (hence, naturally identified with $H_1(T^2)$). A \defn{(lattice) path} $\bcP$ is a function
\[ (v_\bcP,c_\bcP,m_\bcP):\bI \to \bar\cV := \Lambda \times \{\pm 1,0\} \times \bN \]
such that $v_\bcP,c_\bcP$ and $m_\bcP$ (read \defn{edge}, \defn{convexity} and \defn{multiplicity}) are non-zero on the same finite set $\supp\bcP$ (read the \defn{support} of $\bcP$) and $v_\bcP(x)$ is primitive whenever non-zero. We write $m(\bcP) = \sum_x m_\bcP(x)$ and $[\bcP]:=\sum_x m_\bcP(x) \cdot v_\bcP(x)$ and say:
\begin{enumerate}[(a)]
\item Two paths $\bcP_1$ and $\bcP_2$ are \defn{compatible} if  $v_{\bcP_1}(x)=v_{\bcP_2}(x)$ and $c_{\bcP_1}(x)= c_{\bcP_2}(x)$ for every $x \in \supp(\bcP_1) \cap \supp(\bcP_2)$. In this case, their \defn{union} $\bcP_1 \cup \bcP_2$ is the path with $m_{\bcP_1 \cup\bcP_2} = m_{\bcP_1} + m_{\bcP_2}$ and compatible with each $\bcP_i$.
\end{enumerate}
A \defn{decoration} $\cP$ of $\bcP$ is a function $(v_\cP,c_\cP,m_\cP^e,m_\cP^h):\bI \to \cV := \Lambda \times \{\pm 1,0\} \times \bN^2$ ($m^e_\cP$ and $m^h_\cP$ read \defn{elliptic} and \defn{hyperbolic multiplicity}) with $\bcP=(v_\cP,c_\cP,m_\cP^e+m_\cP^h)$.
\end{definition}

We use the term ``path'' because we can depict $\bcP$ as a piecewise linear curve in $\ft^2$ by concatenating $v_\bcP(x)$ with multiplicity $m_\bcP(x)$ in order of increasing $x$, where each instance of $v_\bcP(x)$ is labelled with $\check x$ if $c_\bcP(x)=1$ (\defn{convex}) and with $\hat x$ if $c_\bcP(x)=-1$ (\defn{concave}). For a decorated path $\cP$, we label each edge with $\check e_x, \check h_x, \hat h_x$ or $\hat e_x$ (this is unique only up to shuffling $e/h$ labels at the same $x$) as we see shortly in Figure \ref{fig:diff-examples}.

\begin{definition}\label{def:region}
A \defn{(lattice) region} $\bcR$ is a pair $(\bcP^0,\bcP^1)$ of compatible lattice paths with $[\bcP^0] = [\bcP^1]$. We write $c_\bcR= c_{\bcP^0 \cup \bcP^1}, v_\bcR= v_{\bcP^0 \cup \bcP^1}$, $m_\bcR=m_{\bcP^0 \cup \bcP^1}$, and $m(\bcR) = \sum_x m_\bcR(x)$. The \defn{slice class} of $\bcR$ at $x_0 \in \bI$ is
\[ \sigma_\bcR(x_0) := - \sum_{x <x_0} m_{\bcP^0}(x) \cdot v_{\bcP^0}(x) + \sum_{x<x_0} m_{\bcP^1}(x) \cdot v_{\bcP^1}(x)  \in \Lambda \]
and the \defn{support} of $\bcR$ is $\supp(\bcR) := \supp(m_\bcR) \cup \supp(\sigma_\bcR)$. We say:
\begin{enumerate}[(a)]
\item $\bcR$ is \defn{local} if $\bcP^0=\bcP^1$ and \defn{empty} if $\bcP^0=\bcP^1=0$.
\item Two regions $\bcR_1 = (\bcP^0_1,\bcP^1_1)$ and $\bcR_2 = (\bcP^0_2,\bcP^1_2)$ are \defn{composable} at $x_0$ if $\bcP^0_1,\bcP^1_1,\bcP^0_2$ and $\bcP^1_2$ are pairwise compatible and $\max(\supp\bcR_1) \le x_0 \le \min(\supp\bcR_2)$. In this case, their \defn{concatenation} $\bcR_1\bcR_2$ is $(\bcP^0_1 \cup \bcP^0_2, \bcP^1_1 \cup \bcP^1_2)$ and $\bcR_1\bcR_2$ is said to \defn{decompose} at $x_0$. We say $\bcR$ is \defn{indecomposable} if it cannot be written as a concatenation of two non-empty regions. Any $\bcR$ can be uniquely written as a concatenation $\bcR_1 \cdots \bcR_d$ where each $\bcR_i$, called a \defn{factor}, is non-empty and indecomposable.
\item A non-local indecomposable region $\bcR=(\bcP^0,\bcP^1)$ is \defn{positive} if, for each $i$, $v_{\bcP^i}(x) \times \sigma_{\bcR}(x) \ge 0$ with equality only if $c_{\bcP^i}(x) \neq  (-1)^{i+1}$. It is \defn{minimally positive} if it is positive, each non-zero $\sigma_\bcR (x)$ is primitive and, for each $i$, $v_{\bcP^i}(x) \times \sigma_\bcR(x) \le 1$ with equality only if $c_{\bcP^i}(x) \neq (-1)^i$. A general region $\bcR$ is \defn{(minimally) positive} if each of its non-local factors is.
\end{enumerate}
A \defn{decoration} $\cR$ of $\bcR = (\bcP^0,\bcP^1)$ is a pair $(\cP^0,\cP^1)$ of decorations $\cP^i$ of $\bcP^i$. We say:
\begin{enumerate}[(a)]
\item[(d)] $\cR$ is \defn{trivial} if $\cP^0= \cP^1$.
\item[(e)] $\cR$ is \defn{minimally decorated} if, for each $i$, $m_{\cP^i}^e(x) = 0$ whenever $c_{\cP^i}(x) = (-1)^i$ and $m_{\cP^i}^h(x)=0$ whenever $c_{\cP^i}(x) = (-1)^{i+1}$. It is \defn{almost minimally decorated} if $\sum_x |m_{\cP^0}^e(x)-m_{\cP_{\min}^0}^e(x)| + |m_{\cP^1}^e(x)-m_{\cP_{\min}^1}^e(x)| = 1$ where $(\cP_{\min}^0,\cP_{\min}^1)$ is the minimal decoration of $\bcR$.
\end{enumerate}
\end{definition}
A decorated path $\cP$ and a decorated region $\cR$ inherit terminologies and operations of their underlying undecorated $\bcP$ and $\bcR$. One caveat is that a decomposition of $\cR$ at $x_0$ is unique only up to re-distributing elliptic/hyperbolic multiplicities at $x_0$. We depict $\bcR=(\bcP^0,\bcP^1)$ by a closed (not necessarily embedded) polygon between $\bcP^0$ and $\bcP^1$.

In Figure \ref{fig:diff-examples}(b), three lattice regions are drawn with $\cP^0,\cP^1$ and $\sigma_\cR$ in red, blue and dotted arrows, respectively. They are minimally positive: each triangle formed by $v_{\cP^i}(x)$ and $\sigma_{\cR}(x)$ is either degenerate or primitive with the right orientation; and the convexity $c_{\cP^i}$ satisfies the requirement, e.g. for the third region, $v_{\cP^1}(x_2)$ is not parallel to $\sigma_\cR(x_2)$ so $c_{\cP^1}(x_2)=1$ while $v_{\cP^1}(x_4)$ is parallel to $\sigma_\cR(x_4)$ so $c_{\cP^1}(x_4)=-1$, and so on. They are also almost minimally decorated: we recover the minimal decoration by reversing ellipticity of $\hat e_{x_4}$, $\check e_{x_6}$ and $\check h_{x_2}$ in each respective region.

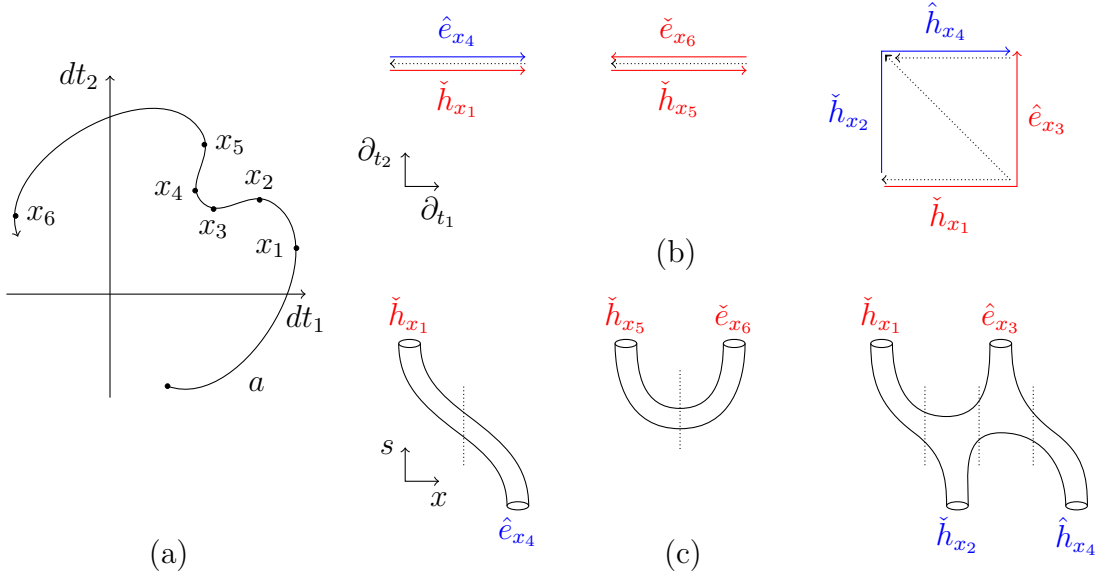
\begin{figure}
\begin{center}
\begin{tikzpicture}[scale=1.8]
\begin{scope}[xshift=-17ex,yshift=-10ex,scale=1.7]
\draw[->] (.15,0.6) -- (1.45,0.6) node[below] {$dt_1$};
\draw[->] (.6,.15) -- (.6,1.55) node[left] {$dt_2$};
\draw[->] (.85,.2) to[out=-20,in=-30] node[pos=.3, below right] {$a$} (1.3,1) to[out=150,in=-45] (1,1) to[out=135,in=-60] (1,1.3) to[out=120,in=110] (.2,0.85);

\draw[fill] (1.41,.8) node[left]{$x_1$} circle(.01);
\draw[fill] (1.25,1.01) node[above]{$x_2$} circle(.01);
\draw[fill] (1.05,.97) node[below]{$x_3$} circle(.01);
\draw[fill] (.97,1.05) node[left]{$x_4$} circle(.01);
\draw[fill] (1.01,1.25) node[right]{$x_5$} circle(.01);
\draw[fill] (.19,.94) node[right]{$x_6$} circle(.01);
\draw[fill] (.85,.2) circle(.01);
\end{scope}
\begin{scope}[yshift=5ex]
\draw[color=red,->] (0,-.05) -- (1,-.05) node[midway,below] {$\check h_{x_1}$};
\draw[densely dotted,->] (1,0) -- (0,0);
\draw[color=blue,->] (0,.05) -- (1,.05) node[midway,above] {$\hat e_{x_4}$};
\end{scope}
\begin{scope}[xshift=9ex,yshift=5ex]
\draw[color=red,->] (0,-.05) -- (1,-.05) node[midway,below] {$\check h_{x_5}$};
\draw[densely dotted,->] (1,0) -- (0,0);
\draw[color=red,->] (1,.05) -- (0,.05) node[midway,above] {$\check e_{x_6}$};
\end{scope}
\begin{scope}[xshift=20ex]
\draw[->,color=red] (.02,0) -- (1,0) node[midway,below] {$\check h_{x_1}$} -- (1,1) node[midway,right] {$\hat e_{x_3}$};
\draw[->,color=blue] (0,.1) -- (0,1) node[midway,left] {$\check h_{x_2}$}-- (.95,1) node[midway,above] {$\hat h_{x_4}$};
\draw[densely dotted,->] (.95,.05) -- (0,.05);
\draw[densely dotted,->] (.95,0.05) -- (0.03,.97);
\draw (.07,.97) -- (.03,.97) -- (.03,.93);
\draw[densely dotted,->] (.95,.95) -- (.1,.95);
\end{scope}

\begin{scope}[xshift=2ex,scale=.5]
\draw[<->] (0,0)  node[below] {$\partial_{t_1}$} -- (-.5,0) -- (-.5,.5) node[left] {$\partial_{t_2}$};
\draw (3.5,-1) node {(b)};
\end{scope}
\begin{scope}[yshift=-12ex,xshift=2ex,scale=.5]
\draw[<->] (0,0)  node[below] {$x$} -- (-.5,0) -- (-.5,.5) node[left] {$s$};
\end{scope}

\begin{scope}[yshift=-13ex,yscale=.6,xscale=.8]
\begin{scope}[xshift=1ex]
\draw (0,2) ellipse(.1 and .05) node[above,color=red] {$\check h_{x_1}$};
\draw (1,0) ellipse(.1 and .05) node[below,color=blue] {$\hat e_{x_4}$};
\draw (-0.1,2) to[out=-90,in=135] (.45,.9) to[out=-45,in=90] (.9,0);
\draw (0.1,2) to[out=-90,in=135] (.55,1.1) to[out=-45,in=90] (1.1,0);
\draw[densely dotted] (.5,.5) -- (.5,1.5);
\end{scope}
\begin{scope}[xshift=12ex]
\draw (0,2) ellipse(.1 and .05) node[above,color=red] {$\check h_{x_5}$};
\draw (1,2) ellipse(.1 and .05) node[above,color=red] {$\check e_{x_6}$};
\draw (0.1,2) to[out=-90,in=180] (.5,1.2) to[out=0,in=-90] (.9,2);
\draw (-0.1,2) to[out=-90,in=180] (.5,.95) to[out=0,in=-90] (1.1,2);
\draw[densely dotted] (.5,.7) -- (.5,1.7);
\end{scope}

\begin{scope}[xshift=25ex]
\draw (0,2) ellipse(.1 and .05) node[above,color=red] {$\check h_{x_1}$};
\draw (1.1,2) ellipse(.1 and .05) node[above,color=red] {$\hat e_{x_3}$};
\draw (0.1,2) to[out=-90,in=180] (.6,1.1) to[out=0,in=-90] (1,2);
\draw (.8,0) to[out=90,in=180] (1.1,.9) to[out=0,in=90] (1.7,0);
\draw (-0.1,2) to[out=-90,in=135] (.3,.9) to[out=-45,in=90] (.6,0);
\draw (1.2,2) to[out=-90,in=135] (1.6,.9) to[out=-45,in=90] (1.9,0);
\draw (.7,0) ellipse(.1 and .05) node[below,color=blue] {$\check h_{x_2}$};
\draw (1.8,0) ellipse(.1 and .05) node[below,color=blue] {$\hat h_{x_4}$};
\draw[densely dotted] (.4,.5) -- (.4,1.5);
\draw[densely dotted] (.9,.5) -- (.9,1.5);
\draw[densely dotted] (1.4,.5) -- (1.4,1.5);
\end{scope}
\end{scope}

\begin{scope}[xshift=-9ex,yshift=-15ex]
\draw(0,0) node {(a)};
\draw(3.8,0) node {(c)};
\end{scope}
\end{tikzpicture}
\vspace{-.5in}
\end{center}
\caption{(a) Graph of an orbital moment map $a$, (b) regions contributing to the differential, and (c) sketches of corresponding $J$-holomorphic curves.}
\label{fig:diff-examples}
\end{figure}

We now relate lattice paths and lattice regions to an orbital moment map $a$. As mentioned above, the Reeb vector field $\bar R$ of $\pi_\bI^*a$ as a function from $\bI$ to $\ft^2$ is
\[ \bar R = (a')^\vee/(a \times a') \]
where we identify $(\ft^2)^* \isom \ft^2$ via $(dt_1)^\vee = -\partial_{t_2}$ and $(dt_2)^\vee = \partial_{t_1}$. It is also convenient to write $u_1 \sim u_2$ when $u_1$ is a positive multiple of $u_2$ for $u_1,u_2 \in \ft^2,(\ft^2)^*$ or $\bR$ and $\delta_x:\bI \to \bR$ for the function supported at $\{x\}$ with $\delta_x(x)=1$.

\begin{definition}\label{def:assoc}
Let $a$ be a generic orbital moment map and $\lambda$ a perturbation of $\pi_\bI^*a$ described earlier (and more precisely in \S\ref{section:mb}).
\begin{enumerate}[(a)]
\item A path $\bcP$ is \defn{$a$-compatible} if $v_\bcP(x) \sim a'(x)^\vee$ and $c_\bcP(x) \sim (a' \times a'')(x)$ for each $x \in \supp\bcP$.
\item A region $\bcR$ is \defn{$a$-positive} if each $\bcP^i$ is $a$-compatible and $(a')^\vee \times \sigma_\bcR \ge 0$.
\end{enumerate}
To an orbit set $\gamma = \{(e_{x_i},m_i^e),(h_{x_j},m_j^h)\}$ of $\lambda$, we \defn{associate} $\cP_\gamma$, the unique $a$-compatible decorated path with $m_{\cP_\gamma}^e= \sum m^e_i \delta_{x_i}$ and $m_{\cP_\gamma}^h=\sum m^h_j \delta_{x_j}$. To a pair of orbit sets $\alpha$ and $\beta$ with $[\alpha] =[\beta] \in H_1(T^2)$, we \defn{associate} the decorated region $\cR_\ab=(\cP_\alpha,\cP_\beta)$.
\end{definition}
Figure \ref{fig:diff-examples} shows an orbital moment map $a$ and $a$-positive lattice regions associated to admissible orbit sets $\alpha$ and $\beta$ of $\lambda$. It also sketches $J$-holomorphic curves $C$ from $\alpha$ to $\beta$ with each dotted line showing the ``slice'' $C \cap (\bR \times  \{x_0\} \times T^2)$, whose homology class agrees with $\sigma_\cR(x_0)$. According to Theorem \ref{thm:main}, these regions correspond to non-zero differential coefficients. For non-examples, see Figure \ref{fig:diff-nonexamples}.

\begin{remark}
Write $(\cP^0,\cP^1)$ for $\cR'$ from Theorem \ref{thm:main}. If $a$ is convex everywhere, e.g. $(T^3,\lambda_n)$, it is easy to deduce from Definition \ref{def:region} that $m(\cP^0) =2$ with $\supp(\cP^0) = \partial (\supp\cR')$. This is the ``rounding corner'' operation in \cite{t3}. Similarly, if $a$ is concave everywhere, $m(\cP^1)=2$ with $\supp(\cP^1) = \partial(\supp\cR')$, a ``dual'' to rounding a corner as in \cite{pfh}. In general, if $a_1$ and $a_2$ are orbital moment maps and $a_2$ is a reflection of $a_1$ through a line of rational slope in $(\ft^2)^*$, ECC of $\pi_\bI^*a_2$ is dual to ECC of $\pi_\bI^*a_1$.
\end{remark}

{\bf Acknowledgement.} I would like to thank Michael Hutchings, who inspired and guided me through this project and my career, Andr\'as Stipsicz for his support and valuable advice and Vinicius Gripp for stimulating discussions. I thank Denis Auroux, Dan Cristofaro-Gardiner, Roman Golovko, and Klaus Niederkr\"uger for their interest and many helpful suggestions.

\section{Preliminaries}\label{section:prelim}

\subsection{Review of embedded contact homology}\label{section:ech}

We briefly review ECH following \cite{bn} (see also \cite{ir}). Let $Y$ be a 3-manifold with a non-degenerate contact form $\lambda$ and pick a generic $\lambda$-admissible almost complex structure $J$ on $\bR \times Y$. Admissibility means that $J$ is $\bR$-invariant, sends $\xi=\ker\lambda$ to itself rotating positively with respect to $d\lambda$, and $J(\partial_s)=R$ where $s$ is the $\bR$-coordinate and $R$ is the Reeb vector field.

\vspace{.1in}\noindent {\bf Generators.} An \defn{orbit set} $\gamma$ is a finite set of pairs $\{(\gamma_i,m_i)\}$ where $\gamma_i$ are distinct embedded Reeb orbits and $m_i$ are positive integers. We say $\gamma$ is \defn{admissible} if $m_i=1$ whenever $\gamma_i$ is hyperbolic and its homology class is $\sum_i m_i[\alpha_i] \in H_1(Y)$. The ECH chain complex $ECC(Y,\lambda,J)$ (or $ECC(Y,\lambda,J,\Gamma)$) is generated (over $\bZ/2$ coefficients) by admissible orbit sets (in the homology class $\Gamma$).

\vspace{.1in}\noindent {\bf Holomorphic currents.} Consider $J$-holomorphic curves in $(\bR \times Y,J)$ with positive and negative ends at Reeb orbits. Two $J$-holomorphic curves $C$ and $C'$ are said to be \defn{equivalent} if $C$ is obtained from $C'$ by a pre-composition with a biholomorphic map on its domain. Then, a \defn{$J$-holomorphic current} $\cC$ is a finite set of pairs $\{(C_k,d_k)\}$ where $C_k$ are equivalent classes of distinct irreducible somewhere injective $J$-holomorphic curves and $d_k$ are positive integers. The \defn{moduli space} $\cM^J(\ab)$ (or $\cM(\ab)$) of $J$-holomorphic currents from $\alpha = \{(\alpha_i,m_i)\}$ to $\beta = \{(\beta_j,n_j)\}$ consists of $J$-holomorphic currents whose total multiplicity of positive ends at covers of $\alpha_i$ is $m_i$ and whose total multiplicity of negative ends at covers of $\beta_i$ is $n_i$, with no other ends. The homology class of $\cC$ is $\sum_k d_k[C_k]$. We say that $\cC$ is \defn{somewhere injective} if $d_k = 1$ for each $k$ and that $\cC$ is \defn{embedded} if it is somewhere injective, each $C_k$ is embedded and $C_k$ are pairwise disjoint.

\vspace{.1in}\noindent {\bf The ECH index.} For $\alpha$ and $\beta$ as above, let $H_2(Y,\alpha,\beta)$ denote the set of 2-chains $Z$ in $Y$ with $\partial Z = \sum_i m_i\alpha_i - \sum_j n_j\beta_j$, modulo boundaries of 3-chains. Fix a symplectic trivialization $\tau$ of $\xi$ over each $\alpha_i$ and $\beta_j$. The \defn{ECH index} for the triple $(\alpha,\beta,Z)$ is
\begin{equation}\label{eq:echind}
I(\alpha,\beta,Z) := c_\tau(Z) + Q_\tau(Z) + CZ_\tau^I(\alpha,\beta) \in \bZ.
\end{equation}
Here,
\begin{equation*} \label{eq:czech}
 CZ_\tau^I(\alpha,\beta) := \sum_i \sum_{k=1}^{m_i} CZ_\tau(\alpha_i^k) - \sum_j \sum_{l=1}^{n_j} CZ_\tau(\beta_j^l)
\end{equation*}
where $CZ_\tau(\rho) \in \bZ$ denotes the Conley-Zehnder index of an orbit $\rho$ with respect to $\tau$. If $S$ is an embedded surface representative of $Z$, the \defn{relative Chern class} $c_\tau(Z)=\<c_1(\xi,\tau),Z\>$ is the count of zeroes of a section of $\xi|_S$ which is constant with respect to $\tau$ near each of its ends. The \defn{relative intersection pairing} $Q_\tau(Z)$ is the count of intersections of two embedded (except at the boundary) transversely intersecting surfaces $S$ and $S'$ in $[-1,1] \times Y$ subject to the following: (i) $S$ and $S'$ represent $Z$ and $\partial S = \partial S' = \sum_i m_i(\{1\} \times \alpha_i) - \sum_j n_j (\{-1\} \times \beta_j)$ and (ii) the projection of $(S \cup S') \cap ((1-\epsilon,1) \times Y)$ to $Y$ is an embedding, and its image in a transverse slice to $\alpha_i$ is a union of rays which do not intersect and which do not rotate with respect to $\tau$ as one goes around $\alpha_i$ (and similarly for $(S \cup S') \cap ((-1,-1+\epsilon) \times Y)$).

We remark that $I(\alpha,\beta,Z)$ does not depend on the choice of $\tau$. If $C$ (or $\cC$) is a $J$-holomorphic curve (current) from $\alpha$ to $\beta$ in the homology class $Z \in H_2(Y,\alpha,\beta)$, we also write $I(C)$ (or $I(\cC)$) for $I(\alpha,\beta,Z)$. Compare \eqref{eq:echind} with the Fredholm index
\begin{equation}\label{eq:fredind}
 \ind(C) = -\chi(\Sigma) + 2c_\tau([C]) + \sum CZ_\tau(\rho_i^+) - \sum CZ_\tau(\rho_j^-),
\end{equation}
where $\Sigma$ is the domain of $C$ and the two sums are over its positive ends at $\rho_i^+$ and negative ends at $\rho_j^-$, respectively.

\begin{prop}(\cite[\S 3]{bn}) Let $\alpha,\beta$ and $\gamma$ be orbit sets of $\lambda$ in the homology class $\Gamma$.
\begin{enumerate}[(a)]
\item If $Z,Z' \in H_2(Y, \alpha,\beta)$ and $W \in H_2(Y,\beta,\gamma)$,
\begin{equation}\label{eq:indamb}
I(\alpha,\beta,Z) - I(\alpha,\beta,Z') = \<c_1(\xi) + 2PD(\Gamma), Z-Z'\>
\end{equation}
where $PD$ denotes the Poincare dual and
\[ I(\alpha,\gamma,Z+W) = I(\alpha,\beta,Z) + I(\beta,\gamma,W). \]
\item If $C \in \cM(\ab)$ is somewhere injective,
\begin{equation} \label{eq:indineq}
 \ind(C) \le I(C)
\end{equation}
with equality only if $C$ is embedded and satisfies the partition condition below.
\item If $\cC \in \cM(\ab)$ contains no trivial cylinders and $\cT$ is a union of trivial cylinders,
\begin{equation}\label{eq:trivcyl}
I(\cC \cup \cT) \ge I(\cC) + 2\# (\cC \cap \cT).
\end{equation}
\end{enumerate}
\end{prop}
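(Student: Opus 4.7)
The plan is to treat the three parts separately, since each rests on different machinery from the foundational ECH literature.

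\textbf{Part (a).} For the ambiguity formula, I would inspect the three summands of \eqref{eq:echind}. Replacing $Z$ by $Z'$ leaves $\partial Z$ fixed, so $CZ_\tau^I$ is unchanged. On the closed 2-cycle $Z - Z'$, the relative Chern class reduces to the absolute one, giving $c_\tau(Z) - c_\tau(Z') = \langle c_1(\xi), Z - Z'\rangle$; and the relative intersection pairing reduces to the ordinary self-intersection form on $H_2(Y;\bZ)$, which by Poincar\'e duality pairs with $Z - Z'$ to give $\langle 2 PD(\Gamma), Z - Z'\rangle$ (the factor of $2$ arising because $Q_\tau$ symmetrizes a count of intersections of two representatives). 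Additivity under concatenation is proved term by term: the $CZ$-sum telescopes at $\beta$; $c_\tau$ adds because $\tau$ is shared on $\beta$ so the trivialized sections glue; and $Q_\tau$ adds because the two relative intersection pairings can be placed in disjoint collar neighborhoods of $\{0\} \times Y$.

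\textbf{Part (b).} This is the ECH index inequality of Hutchings and is the technical heart of the statement. I would establish a schematic identity of the form
\[ I(C) - \ind(C) = 2\delta(C) + \sum_{\rho^+} E^+_\tau(\rho^+) - \sum_{\rho^-} E^-_\tau(\rho^-), \]
where $\delta(C) \ge 0$ counts singular points of $C$ weighted by local intersection multiplicity, and $E^\pm_\tau$ are correction terms built from the discrepancy between $Q_\tau$ and the writhes of the braids cut out by $C$ on small transverse slices near each end. For a somewhere injective end, Siefring's asymptotic expansion yields an upper (respectively lower) bound on the writhe at a positive (respectively negative) end in terms of the $CZ$ data, with equality precisely when the multiplicity partition at that end matches the positive/negative partition prescribed by the partition condition. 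Each $E^\pm$-term is then $\ge 0$, with equality iff the partition condition holds, and $\delta(C) \ge 0$ vanishes iff $C$ is embedded. The main obstacle is the writhe bound: it requires Siefring's asymptotic expansion and a careful combinatorial analysis of linking numbers of eigenfunctions of the asymptotic operator, and I would invoke it as a black box.

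\textbf{Part (c).} Here I combine (a) with intersection positivity. Decompose $\cC \cup \cT$ as two disjoint currents in $\cM(\alpha \cup \alpha_\cT, \beta \cup \beta_\cT)$; the contributions to $I$ from $c_\tau$ and from $CZ_\tau^I$ add linearly, while $Q_\tau$ of the union exceeds the sum $Q_\tau(\cC) + Q_\tau(\cT)$ by a cross-term counting interior intersections of $\cC$ with $\cT$. Each transverse interior intersection contributes $+2$ to this cross-term because $Q_\tau$ is defined via an algebraic count of intersections of two distinct representatives of the same class, both of which see the intersection. Since $\cC$ contains no trivial cylinder components, the asymptotic contributions at any orbits shared between $\cC$ and $\cT$ are non-negative by Siefring's asymptotic intersection positivity, so no term of opposite sign appears. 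Since trivial cylinders have $I(\cT) = 0$, the inequality \eqref{eq:trivcyl} follows.
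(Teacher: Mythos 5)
The paper does not prove this proposition: it quotes it from the reference \cite{bn} (Hutchings' lectures on ECH), which is why the statement carries the citation rather than a proof. There is therefore no in-paper argument to compare against, and the relevant yardstick is whether your sketch matches the standard derivations in the foundational ECH literature.

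Measured against that yardstick, your outline is essentially on target. For (a) the key observations are indeed that $CZ_\tau^I$ is blind to the choice of $Z$, that $c_\tau$ changes by $\<c_1(\xi),Z-Z'\>$ on the closed class $Z-Z'$, and that $Q_\tau$ changes by $2\<PD(\Gamma),Z-Z'\>$ via the polarization of a quadratic form (the term $Q_\tau(Z-Z')$ drops out since a closed 2-cycle in a 3-manifold has zero self-intersection); the additivity statement follows by splitting representatives in collars near $\beta$. For (b) the mechanism is exactly the relative adjunction formula combined with the asymptotic writhe bound (Hofer--Wysocki--Zehnder / Siefring), and your identification of the partition condition as the case of equality is correct. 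One small inconsistency: you display the correction as $\sum E_\tau^+ - \sum E_\tau^-$ and then assert each $E^\pm$ is $\ge 0$, which would leave the right-hand side of indeterminate sign; the writhe bound is arranged (with opposite inequalities at positive and negative ends) so that \emph{both} families of terms enter with the same sign and the total is nonnegative. For (c) the mechanism you describe, namely that $c_\tau$ and $CZ_\tau^I$ add while $Q_\tau$ picks up a cross term of $2\#(\cC\cap\cT)$ plus nonnegative asymptotic linking contributions, together with $I(\cT)=0$, is the standard argument. In short: the paper defers to \cite{bn}, and your sketch faithfully reproduces the proofs found there up to the sign slip noted above.
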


\vspace{.1in}\noindent {\bf Partition conditions.} Let $C$ be a $J$-holomorphic curve from $\alpha= \{(\alpha_i,m_i)\}$ to $\beta= \{(\beta_j,n_j)\}$. For each $i$, $C$ has ends at covers of $\alpha_i$ with total multiplicity $m_i$. The multiplicities of these covers form a partition of $m_i$, which we denote by $p_i^+(C)$. We similarly define the partition $p_j^-(C)$ of $n_j$ for each $j$.

For each embedded Reeb orbit $\rho$ and each positive integer $m$, we define two special partitions $p_\rho^+(m)$ and $p_\rho^-(m)$ of $m$. If $\rho$ is positive hyperbolic, then $p_\rho^+(m)=p_\rho^-(m) = (1, \cdots, 1)$. If $\rho$ is elliptic with rotation angle $\phi$, let $\Lambda_\phi^+(m)$ be the maximal concave polygonal path in the $x,y$-plane with vertices at lattice points which starts at the origin, ends at $(m,\floor{m\phi})$ and lies below the line $y=\phi x$. Then, $p_\rho^+(m) = p_\phi^+(m)$ consists of the horizontal displacements of the segments of $\Lambda_\phi^+(m)$ connecting consecutive lattice points; and $p_\rho^-(m):=p_{-\phi}^+(m)$. (For more details or $p_\rho^\pm$ for negative hyperbolic $\rho$, see \cite[\S 3.9]{bn}.) Any $C$ satisfying equality in \eqref{eq:indineq} must satisfy $p_i^+(C) = p^+_{\alpha_i}(m_i)$ and $p_j^-(C) = p_{\beta_j}^-(n_j)$ for each $i$ and $j$.

\vspace{.1in}\noindent {\bf Differentials.} Let $\cM_k(\ab) := \{\cC \in \cM(\ab)|I(\cC)=k\}$. The key consequence of \eqref{eq:indineq} and \eqref{eq:trivcyl} is that, if $J$ is generic, any $\cC \in \cM_1(\alpha,\beta)$ can be written as the disjoint union $C' \sqcup \cT$ where $\cT$ is trivial and $C'$ is an irreducible embedded $J$-holomorphic curve with $I(C')=\ind(C') = 1$. We also have that $\cM_1(\alpha,\beta)/\bR$ is compact by a version of Gromov compactness (See \cite[\S 5.3]{bn}). If $\alpha$ and $\beta$ are admissible, we define
\[ \< \partial \alpha, \beta\> := \#(\cM_1(\alpha,\beta)/\bR) \in \bZ/2. \]

\vspace{.1in}\noindent {\bf Filtration.} The action $\cA(\alpha)$ of an orbit set $\alpha = \{(\alpha_i,m_i)\}$ is
\[ \cA(\alpha) := \sum_i m_i \int_{\alpha_i} \lambda. \]
By Stokes' theorem, the ECH chain complex is filtered by the action of its generators. For each $L > 0$, the filtered ECH chain complex $ECC^L$ is generated only by orbit sets with action less than $L$. In this paper, we formulate the filtered ECC so that there is a natural chain inclusion map $ECC^L(Y,\lambda_L,J_L) \to ECC^{L'}(Y,\lambda_{L'},J_{L'})$ for $L< L'$. We recover the full ECH as the direct limit of $ECH^L$ as $L \to \infty$. With this understood, we drop $L$ from the notation.

\subsection{Morse-Bott theory}\label{section:mb}

We return to $Y= \bI \times T^2$ with a contact form $\pi_\bI^*a$. To define ECC, we perturb $\pi_\bI^*a$ to a non-degenerate $\lambda$ and choose a generic $\lambda$-admissible almost complex structure $J$ on $\bR \times Y$. The goal of this section is to describe $\lambda$ and $J$ for which ECC yields a nice combinatorial description.

Define $\bar Q:\bI \to \ft^2, v_a:\bI \to \Lambda$ and $\cA_a:\bI \to \bR^+ \cup \{\infty\}$ by $\bar Q := -a^\vee$; $v_a(x)=v$ if $a'(x)^\vee \sim v$ for a primitive $v \in \Lambda$ and 0 otherwise; and $\cA_a(x)= \frac{(a \times a')(x)}{\norm{a'(x)}} \norm{v_a(x)}$ if $v_a(x) \neq 0$ and infinity otherwise. Let $\Xi_L:= \{x \in \bI|\cA_a(x) < L\}$ and $N:= L/\min\cA_a$. Then, for $\rho$ in some $S^1$-family $\brho_x$, $\cA(\rho)=\cA_a(x)$, $\cL_{\partial_x} \bar R =  \frac{a'\times a''}{(a \times a')^2}\bar Q$ and $\cL_{\bar Q}\bar R =0$, so the linearized Reeb flow over $\rho$ is contained in the Maslov cycle with the return map
\begin{equation}\label{eq:returnmap}
 \pmat{1 & 0 \\ \frac{a'\times a''}{(a \times a')^2}\cA_a & 1},
\end{equation}
while $\bar Q$ describes a section of $\xi$ with $(d\lambda)(\partial_x,\bar Q) =a \times a'> 0$, giving a trivialization $\tau$ of $\xi$ by
\[ \xi \isom \Span\{\partial_x,\bar Q\}. \]

\vspace{.1in}\noindent{\bf Perturbation.} (cf. \cite[\S 10.5]{t3}) Whenever $v_a(x)=(p,q) \neq 0$, define $\Theta_{x}:\{x\} \times T^2 \to \bR/\bZ$ by
\[  \Theta_{x}(x,t_1,t_2) = (t_1,t_2) \times (p,q) + pq/2.  \]
For each $\theta \in \bR/\bZ$, $\Theta_{x}^{-1}(\theta)$ is the image of a unique embedded orbit in $\brho_{x}$, which we denote by $\brho_{x}(\theta)$. Fix $\eta > 0$, $0<\theta_h<1/5$ and $\theta_e:= -\theta_h/N$. For each $x \in \Xi_L$, let $f_{x}:\bR/\bZ \to \bR$ be a Morse function which attains maximum at $\theta_{x}^{\max}$ and minimum at $\theta_{x}^{\min}$ with no other critical points, where $(\theta_{x}^{\max},\theta_{x}^{\min}) = (\theta_e,\theta_h)$ if $\brho_{x}$ is convex and $(-\theta_h,-\theta_e)$ otherwise. Then, choose disjoint neighbourhoods $U_{x}$ of $x$ so that $a' \times a''$ does not vanish on $U_{x}$ and extend $\Theta_{x}^*(f_{x})$ to a function $\tilde f_{x}$ on $U_{x} \times T^2$ with a compact support and $\partial \tilde f_{x}/\partial x = 0$ near $\{x\} \times T^2$. If $\eta$ is sufficiently small,
\begin{equation}\label{eq:mblambda}
  \lambda := (1 + \eta\tilde f_{x} )\pi_\bI^*a
\end{equation}
is a contact form on $U_{x} \times T^2$ with non-degenerate embedded orbits $\brho_{x}(\theta_e)$ and $\brho_{x}(\theta_h)$ and no other embedded orbits of action less than $L$ \cite{mb}. By \eqref{eq:returnmap}, if $\brho_{x}$ is convex, $\check e_{x}:= \brho_{x}(\theta_e)$ is elliptic, $\check h_{x}:=\brho_{x}(\theta_h)$ is hyperbolic and their $m$-fold iterates for $m<N$ have $CZ_\tau(\check e_{x}^m) = 1$ and $CZ_\tau(\check h_{x}^m) = 0$, provided $\eta$ is sufficiently small. Similarly, if $\brho_{x}$ is concave, $\hat e_{x}:=\brho_{x}(\theta_e)$ is elliptic with $CZ_\tau(\hat e_{x}^m) = -1$ and $\hat h_{x}:=\brho_{x}(\theta_h)$ is hyperbolic with $CZ_\tau (\hat h_{x}^m) = 0$.

\begin{definition}\label{def:goodperturbation}
We say a perturbation $\lambda$ of $\pi_\bI^*a$ is \defn{good} if it is of the form \eqref{eq:mblambda} on $U_x \times T^2$ for each $x \in \Xi_L$ and unperturbed elsewhere.
\end{definition}

\vspace{.1in}\noindent{\bf Holomorphic building.} Define an almost complex structure $\bar J$ on $\bR \times \bI \times T^2$ by $\bar J(\partial_s) = \bar R$ and $\bar J(\partial_x) = \bar Q$. For $(Y, \pi_\bI^*a, \bar J,\{f_x\}_{x \in \Xi_L})$, a \defn{$\bar J$-holomorphic building} $\bar C$ is a sequence of $\bar J$-holomorphic curves $(\bar C^1, \cdots, \bar C^l)$ such that:
\begin{enumerate}[(i)]
\item Each end of $\bar C^i$ converges to the $m$-fold iterate $\brho^m_x(\theta)$ of some $\brho_x(\theta)$.
\item For $1 \le i < l$, there is a bijective pairing between the negative ends of $\bar C^i$ and the positive ends of $\bar C^{i+1}$. For each such pair, the negative end of $\bar C^i$ converges to $\brho^m_x(\theta^-)$, the positive end of $\bar C^{i+1}$ converges to $\brho^m_x(\theta^+)$ for the same $\brho_x$ and $m$ and there is a downward flow of $f_x$ from $\theta^-$ to $\theta^+$.
\item For each positive end of $\bar C^1$ at some $\brho^m_x(\theta^+)$, there is a downward flow of $f_x$ from a critical point of $f_x$ to $\theta^+$. For each negative end of $\bar C^l$ at some $\brho^m_x(\theta^-)$, there is a downward flow of $f_x$ from $\theta^-$ to a critical point of $f_x$.
\end{enumerate}

\section{Proof of the main theorem}\label{section:main}

\begin{definition}\label{def:combind}
The \defn{local combinatorial ECH index} of a decorated region $\cR$ at $x$ is $ I_\cR(x) := Q_\cR(x) + CZ_\cR(x)$ where
\[ Q_\cR(x) = m_{\cR}(x)\cdot (v_\cR(x) \times \sigma_\cR(x)) \]
and
\[ CZ_\cR(x) = c_{\cP^0}(x) \cdot m_{\cP^0}^e(x) - c_{\cP^1}(x) \cdot m_{\cP^1}^e(x). \]
The \defn{combinatorial ECH index} $I(\cR)$ of $\cR$ is $\sum_x I_\cR(x)$.
\end{definition}
Note $\sum_x Q_\cR(x)$ is the area of the polygon depicting $\cR$ with respect to the standard area form. As one might expect:

\begin{prop}\label{prop:index}
Let $a$ be a generic orbital moment map and $\lambda$ a good perturbation of $\pi_\bI^*a$. For orbit sets $\alpha$ and $\beta$ of $\lambda$ with $[\alpha]=[\beta]$ and any $Z \in H_2(Y,\alpha,\beta)$,
\[ I(\cR_\ab) = I(\alpha,\beta,Z). \]
\end{prop}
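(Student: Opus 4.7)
The plan is to match the three summands of the ECH index formula $I(\alpha,\beta,Z) = c_\tau(Z) + Q_\tau(Z) + CZ^I_\tau(\alpha,\beta)$ with the corresponding pieces of $I(\cR)$, where $\cR = \cR_\ab$. First I would check that $I(\alpha,\beta,Z)$ is independent of $Z$ in this setting: by \eqref{eq:indamb}, any two choices of $Z$ differ by an element of $H_2(Y) = \bZ \cdot [\{x\}\times T^2]$, and both $c_1(\xi)$ and $PD(\Gamma)$ pair trivially with the $T^2$-slice (the global frame $(\partial_x, \bar Q)$ trivializes $\xi$ over each slice, and the orbits have class in $H_1(T^2)$, which does not meet such a slice). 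The Chern-class term vanishes for essentially the same reason: $\tau$ is the restriction to each orbit of the global frame $\xi \isom \Span\{\partial_x, \bar Q\}$, since the perturbation \eqref{eq:mblambda} is a conformal rescaling of $\pi_\bI^*a$ and leaves $\xi$ and its frame unchanged. Hence the $\tau$-constant section at the ends extends to a nowhere-vanishing section of $\xi$ over any representative $S$, giving $c_\tau(Z) = 0$.

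The Conley-Zehnder term is a direct substitution. Using the values $CZ_\tau(\check e_x^m) = 1$, $CZ_\tau(\hat e_x^m) = -1$ and $CZ_\tau(h_x^m) = 0$ for $m < N$ (computed from \eqref{eq:returnmap}, valid under the action bound $L$), grouping elliptic contributions by the associated decorated paths $\cP_\alpha$ and $\cP_\beta$ yields
\[
CZ^I_\tau(\alpha,\beta) = \sum_x c_{\cP_\alpha}(x)\,m^e_{\cP_\alpha}(x) - \sum_x c_{\cP_\beta}(x)\,m^e_{\cP_\beta}(x) = \sum_x CZ_\cR(x),
\]
with hyperbolic orbits dropping out, matching the combinatorial definition.

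The main content is $Q_\tau(Z) = \sum_x Q_\cR(x)$. I would construct an explicit representative $S \subset [-1,1] \times Y$ by piecewise interpolation across critical levels: between consecutive critical $x_i$'s the surface $S$ consists of cylinders whose $T^2$-slices represent $\sigma_\cR(x)$, while at each critical $x_i$ a pair-of-pants piece effects the transition via the edges $v_{\cP^0}(x_i)$ and $v_{\cP^1}(x_i)$ with multiplicities $m_{\cP^0}(x_i)$ and $m_{\cP^1}(x_i)$. A small parallel pushoff $S'$ in the $T^2$ direction is disjoint from $S$ on the cylindrical pieces, so all intersections localize at critical $x_i$, where a local model gives exactly $m_\cR(x_i) \cdot (v_\cR(x_i) \times \sigma_\cR(x_i))$ signed intersections. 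The trivialization-pushoff condition at the boundary is automatic because $\tau$ descends from the global $T^2$-invariant frame. I expect the main obstacle to be sign bookkeeping in the pair-of-pants pieces and the combined handling of $\cP^0$- and $\cP^1$-edges at a shared critical level; I would handle this by exploiting additivity, decomposing $\cR = \cR_1\cR_2$ into indecomposable factors (with $Z = Z_1 + Z_2$, via \eqref{eq:indamb}) and then reducing within a single level to a primitive one-edge-per-side model that can be verified against an explicit triangle computation and extended by linearity in the multiplicities.
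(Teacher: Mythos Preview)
Your treatment of $c_\tau(Z)=0$, independence of $Z$, and the Conley--Zehnder matching is correct and is exactly what the paper does. The difference is entirely in the $Q_\tau$ computation.

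The paper also builds $S$ as cylinders over an explicit graph $G\subset[-1,1]\times\bI$ (a spine $\{0\}\times\bI$ together with legs to $(\pm1,x)$ for each $x\in\supp(\cP_\alpha\cup\cP_\beta)$), but then takes $S'=(\psi\times\id_Y)(S)$ for an orientation-preserving diffeomorphism $\psi$ of $[-1,1]$ sliding the spine toward $s=1-\epsilon$. Because $\psi$ fixes $\pm1$, one has $\partial S'=\partial S$ automatically and the $\tau$-ray condition is inherited; the intersections are then visibly the leg--spine crossings, each contributing $m_{\cP^i}(x)\cdot(v_{\cP^i}(x)\times\sigma_\cR(x))$ by a curve-in-$T^2$ intersection count.

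Your $T^2$-pushoff is a genuinely different choice of $S'$, and as written it has a gap: translating by a fixed $T^2$-vector moves $\partial S$, so $S'$ no longer represents a class in $H_2(Y,\alpha,\beta)$ with the required boundary. You can repair this by tapering the pushoff near $s=\pm1$, but then you must verify the ray condition and actually carry out the promised ``local model'' count at each vertex rather than assert it; note the vertex piece sits in $D^2\times T^2$, not in a ball, so this count is not immediate. Your fallback of factoring $\cR$ and invoking ``linearity in the multiplicities'' also does not reduce to a primitive triangle: $Q_\cR(x)=m_\cR(x)\cdot(v_\cR(x)\times\sigma_\cR(x))$ is bilinear in $(m,\sigma)$, and an indecomposable factor can carry arbitrary $m_\cR$ at interior points, so more than linearity is needed.

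None of this is fatal, but the paper's $s$-shift sidesteps all of it and yields the intersection count by inspection. I would replace your second surface by that shift.
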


\begin{proof}
Since $\xi$ is trivial and the generator $[T^2] \in H_2(Y)$ has algebraic intersection number zero with every orbit, $c_\tau(Z) =0$ and $I(\alpha,\beta,Z)$ is independent of $Z$ by \eqref{eq:indamb}. It is also clear that $\sum_x CZ_{\cR_\ab}(x) = CZ_\tau^I(\alpha,\beta)$ (see \S \ref{section:mb}). To compute $Q_\tau(Z)$, let
\begin{equation}\label{eq:graph}
 G := (\{0\} \times \bI) \cup \{(s, x + (1-\abs{s})\epsilon)\}_{s \in [-1,1],x \in \supp(\cP_\alpha \cup \cP_\beta)}
\end{equation}
be a union of line segments in $[-1,1] \times \bI$ with multivalent vertices
\[ V: = \{(0,x+\epsilon)\}_{x \in \supp(\cP_\alpha \cup \cP_\beta)}. \]
Let $B_{\epsilon/2}(V)$ be the $(\epsilon/2)$-neighbourhood of $V$ and $\pi := \id_{[-1,1]} \times \pi_\bI$. We want a smooth surface $S \subset [-1,1] \times Y$ as in \S \ref{section:ech} such that (i) $\pi(S) \subset G \cup B_{\epsilon/2}(V)$ and (ii) for each component $E$ of $G\setminus B_{\epsilon/2}(V)$, $\pi|_S^{-1}(E)$ consists of minimal number of embedded (disjoint except at $\{\pm 1\} \times Y$) $v$-invariant cylinders, where $v=\sigma_{\cR_\ab}(x)$ if $(0,x) \in E$ and $v_a(x)$ if $(\pm 1, x) \in E$. We can construct such an $S$ by gluing these cylinders so that, near each $(0,x_0) \in V$, the projection $\cZ_x$ of $S \cap ([-1,1]\times \{x\} \times T^2)$ to $T^2$ is a movie of curves with $\cZ_{x_0+ \epsilon/2}$ obtained from $\cZ_{x_0-\epsilon/2}$ by resolving intersections.

If $\psi$ is an automorphism of $[-1,1]$ with $\psi(-1+\epsilon)=0$ and $\psi(0)=1-\epsilon$, $S$ and $S' := (\psi\times \id_Y)(S)$ intersect in $\pi^{-1}(1-\epsilon,x+\epsilon^2)$ with signed count $m_{\cP_\alpha}(x)\cdot (v_{\cP_\alpha}(x) \times \sigma_{\cR_\ab}(x))$ for each $x \in \supp\cP_\alpha$, and in $\pi^{-1}(0,x+\epsilon^2)$ with signed count $m_{\cP_\beta}(x)\cdot (v_{\cP_\beta}(x) \times \sigma_{\cR_\ab}(x))$ for each $x \in \supp\cP_\beta$. We get $\sum_x Q_{\cR_\ab}(x) = Q_\tau(Z)$ by summing up these numbers.
\end{proof}

\subsection{Positivity}\label{section:positivity}

\begin{lem}\label{lem:pos}
Let $a$ be a generic orbital moment map and $\cR$ a lattice region. 
\begin{enumerate}[(a)]
\item If $\cR$ is positive, indecomposable and non-local, $I_\cR \ge 0$.
\item If $\bcR$ is $a$-positive, then it is positive and it decomposes at $x$ whenever $a'(x)^\vee \times \sigma_\bcR(x) = 0$.
\end{enumerate}
\end{lem}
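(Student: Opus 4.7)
For (a), the plan is to show the local bound $I_\cR(x) \ge 0$ at every $x$ and sum. Let $n(x) := v_\cR(x) \times \sigma_\cR(x)$, so $Q_\cR(x) = m_\cR(x)\,n(x)$ and $|CZ_\cR(x)| \le m^e_{\cP^0}(x) + m^e_{\cP^1}(x) \le m_\cR(x)$. If $n(x) \ge 1$, then $Q_\cR(x) \ge m_\cR(x) \ge |CZ_\cR(x)|$, so $I_\cR(x) \ge 0$. If $n(x) = 0$ and $v_\cR(x) \ne 0$, then $Q_\cR(x) = 0$; positivity at each $i$ with $v_{\cP^i}(x) \ne 0$, combined with $c_{\cP^i}(x) \ne 0$, forces $c_{\cP^i}(x) = (-1)^i$, making each summand of $CZ_\cR(x)$ non-negative. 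The case $v_\cR(x) = 0$ contributes zero. Non-locality and indecomposability enter only through the hypothesis ``positive''.

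For (b), I handle (ii) first. Let $\tilde v := v_a(x)$. If $\tilde v = 0$, the hypothesis forces $\sigma_\bcR(x) = 0$ and the decomposition is trivial, so assume $\tilde v \ne 0$ and write $\sigma_\bcR(x) = k\tilde v$ with $k \in \bZ$. A decomposition at $x$ amounts to splitting $(p, q) := (m_{\bcP^0}(x), m_{\bcP^1}(x))$ as $(p_1, q_1) \in [0, p] \times [0, q]$ with $p_1 - q_1 = k$, which is solvable iff $k \in [-q, p]$. Let $g(y) := a'(y)^\vee \times \tilde v$, so $g(x) = 0$ and $g'(x) \ne 0$ by genericity. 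Just before $x$, $\sigma_\bcR \equiv k\tilde v$; just after, $\sigma_\bcR \equiv (k + q - p)\tilde v$. The $a$-positivity inequalities $k\,g(y) \ge 0$ for $y < x$ and $(k + q - p)\,g(y) \ge 0$ for $y > x$, combined with the sign change of $g$ through $x$, force $k$ and $-(k + q - p)$ to have the same sign, placing $k \in [\min(0, p - q), \max(0, p - q)] \subset [-q, p]$.

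For (b)(i), $a$-positivity descends to each factor since $\sigma$ restricted to a factor's support equals the factor's own $\sigma$, so I may assume $\bcR$ is non-local indecomposable. The inequality $v_{\bcP^i}(x) \times \sigma_\bcR(x) \ge 0$ follows from $v_{\bcP^i}(x) \sim a'(x)^\vee$. If equality holds at $x \in \supp \bcP^i$, (ii) furnishes a decomposition at $x$, which by indecomposability must be trivial (one side empty). Combined with the range analysis of (ii), this forces $x$ to be an extremum of $\supp \bcR$ with exactly one of $p, q$ vanishing. The first-order expansion of $a'(y)^\vee \times \sigma_\bcR(y)$ on the non-empty side of $x$ then determines the sign of $a''(x)^\vee \times \tilde v$, hence of $(a' \times a'')(x)$; by $a$-compatibility this yields $c_{\bcP^i}(x) = (-1)^i$, as required.

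The main obstacle is the case analysis in (b)(i): ruling out $x$ strictly interior to $\supp \bcR$ and confirming $p \cdot q = 0$ at the extrema both rely on careful tracking of when a split $(p_1, q_1)$ actually produces a non-trivial decomposition. Non-locality rules out the degenerate case $\supp \bcR = \{x\}$ with $p = q$.
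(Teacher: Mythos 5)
Your proof is correct and uses the same sign analysis of $(a')^\vee \times \sigma_{\bcR}$ near a zero as the paper's argument. You fill in details that the paper leaves implicit — the paper dismisses (a) as ``clear from definition'' and compresses (b) into the observation that $b_+b_-\le 0$ forces $x_0\in\partial(\supp\bcR)$ by indecomposability — but the split-multiplicity bookkeeping for the decomposition, the use of indecomposability plus non-locality to force $\min(p,q)=0$ at the boundary, and the pointwise bound $|CZ_{\cR}(x)|\le m_{\cR}(x)$ are exactly the intended unpacking of those steps.
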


\begin{proof}
Part (a) is clear from definition. For (b), we write $\bcR=(\bcP^0,\bcP^1)$ and show that if $\bcR$ is non-local and indecomposable, then: (i) $Q_\bcR \ge 0$, (ii) $(a')^\vee \times \sigma_\bcR > 0$ on $int(\supp\bcR)$ and (iii) $c_{\bcP^0}(x) \ge 0$ and $c_{\bcP^1}(x) \le 0$ for $x \in \partial(\supp\bcR)$. Claim (i) follows from $a$-positivity at $x \in \supp(m_\bcR)$. For (ii) and (iii), suppose $a'(x_0)^\vee \times \sigma_\bcR(x_0) = 0$ for $x_0 \in \supp(\bcR)$ and write $\sigma_\bcR(x_0\pm \epsilon) = b_\pm \cdot a'(x_0)^\vee$. By $a$-positivity, $a'(x_0\pm \epsilon) \times b_\pm a'(x_0) \ge 0$, while $a'(x_0) \times a'(x_0 \pm \epsilon) \sim \pm (a' \times a'')(x_0)$. In particular, $b_+ b_- \le 0$ so $x_0 \in \partial(\supp\bcR)$ by indecomposability. Furthermore, if $a$ is convex at $x_0$, $b_+ - b_- <0$ so $m_{\bcP^0}(x_0) > 0$ and by non-locality, $m_{\bcP^1}(x_0) = c_{\bcP^1}(x_0) = 0$. Similarly, if $a$ is concave at $x_0$, $c_{\bcP^0}(x_0) = 0$.
\end{proof}

\begin{definition}\label{def:barassoc}
Let $a$ be a generic orbital moment map and let $\lambda$ be $\pi_\bI^*a$ or a good perturbation thereof. To each orbit set $\gamma$ of $\lambda$, we \defn{associate} $\bcP_{\gamma}$, the unique $a$-compatible path such that $\gamma$ contains $m_{\bcP_\gamma}(x)$ orbits (counted with multiplicity) at $x$. To orbit sets $\alpha$ and $\beta$ with $[\alpha]=[\beta]$, we \defn{associate} $\bcR_\ab := (\bcP_\alpha,\bcP_\beta)$.
\end{definition}

\begin{lem}\label{lem:apos}
Let $\lambda$ be as in Definition \ref{def:barassoc} and $J$ a generic $\lambda$-admissible almost complex structure. If $\alpha$ and $\beta$ are orbit sets of $\lambda$ and $\cM(\ab)$ is nonempty, $\bcR_{\alpha,\beta}$ is $a$-positive. Moreover, if $C \in \cM(\ab)$ has no end at $x \in \bI$ and $\lambda$ is unperturbed near $\{x\} \times T^2$, then $a'(x)^\vee \times \sigma_{\bcR_\ab}(x)=0$ if and only if $\cS_x:=C \cap (\bR \times \{x\} \times T^2) = \emptyset$.
\end{lem}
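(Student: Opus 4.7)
The key intermediate fact I would establish first is that, at any $x$ lying in the unperturbed region with no end of $C$ at $x$, the slice $\cS_x \subset \bR\times\{x\}\times T^2$ has homology class
\begin{equation*}
[\cS_x] = \sigma_{\bcR_{\alpha,\beta}}(x) \in H_1(\bR\times T^2) = H_1(T^2) = \Lambda.
\end{equation*}
This is verified by applying Stokes' theorem to $C \cap \{x' \le x\}$ paired against each closed 1-form $\omega$ pulled back from $T^2$: the asymptotic ends with $x_i, x_j < x$ contribute $\sum_i m_i\langle\omega, v_a(x_i)\rangle - \sum_j n_j\langle\omega,v_a(x_j)\rangle$, matching the pairing with $\sigma_\bcR(x) = -\sum_i m_i v_a(x_i)+\sum_j n_j v_a(x_j)$.

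For $a$-positivity I would exploit monotonicity of the $d\lambda$-area. Since $J$ is $\lambda$-admissible, $d\lambda|_{TC} \ge 0$ pointwise; and on the unperturbed region $d\lambda = dx\wedge a'(x)$, so the function $F(x) := \int_{C \cap \{x'\le x\}} d\lambda$ is non-decreasing. At $x$ not at any end and in the unperturbed region, differentiating yields
\begin{equation*}
F'(x) = \int_{\cS_x} \iota_{\partial_x}(dx\wedge a'(x)) = \int_{\cS_x} a'(x) = \langle a'(x),[\cS_x]\rangle = (a'(x))^\vee \times \sigma_\bcR(x) \ge 0.
\end{equation*}
Combined with $a$-compatibility of $\bcP_\alpha,\bcP_\beta$ (immediate from Definition \ref{def:barassoc}), this proves $a$-positivity of $\bcR_{\alpha,\beta}$.

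The ``only if'' direction of the iff is immediate: $\cS_x=\emptyset \Rightarrow [\cS_x]=0 \Rightarrow (a'(x))^\vee \times \sigma_\bcR(x) = 0$. For the converse, assume $(a'(x))^\vee \times \sigma_\bcR(x) = 0$ and first dispose of the case $\sigma_\bcR(x)\ne 0$. Then $\sigma_\bcR(x)$ is a nonzero integer multiple of $(a'(x))^\vee$, so $v_a(x)\ne 0$ and the $\bar J$-holomorphic trivial cylinders $\bR\times \brho_x(\theta)$ exist. Each has ends at the Morse-Bott orbit $\brho_x(\theta)$, not at the non-degenerate orbits $e_{x'},h_{x'}$ of $\lambda$, so no component of $C$ can coincide with any such cylinder. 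Positivity of intersection between $C$ and each cylinder then forces the geometric intersection count to equal the algebraic count $\sigma_\bcR(x)\times v_a(x)=0$; hence $C \cap (\bR\times\brho_x(\theta))=\emptyset$ for every $\theta$, and since $\cS_x \subseteq \bigcup_\theta \bR\times\brho_x(\theta)$, $\cS_x=\emptyset$, contradicting $[\cS_x]=\sigma_\bcR(x)\ne 0$.

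Thus $\sigma_\bcR(x)=0$. Choose an open interval $I\ni x$ inside the unperturbed region, disjoint from all orbit $x$-values of $\alpha\cup\beta$, on which $\sigma_\bcR\equiv 0$; then $F'\equiv 0$ on $I$, so $F$ is constant and $\int_{C\cap(\bR\times I\times T^2)} d\lambda = 0$. Pointwise non-negativity then forces $d\lambda|_{TC} \equiv 0$ on this subset. Expanding an arbitrary $e \in T_pC$ in the frame $\{\partial_s,\bar R, \partial_x, \bar Q\}$ as $e = a\partial_s + c\bar R + b\partial_x + d\bar Q$, one computes $(dx\wedge a')(e,\bar Je) = (b^2+d^2)(a\times a')$, so the vanishing forces $TC \subseteq \Span\{\partial_s,\bar R\}$ on $C\cap(\bR\times I\times T^2)$. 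If this set were non-empty it would be 2-dimensional and open in $C$, whence unique continuation for $J$-holomorphic curves would force $C$ to be globally a trivial cylinder over a Reeb orbit of $\pi_\bI^*a$ inside $\bR\times I\times T^2$ — necessarily Morse-Bott, again contradicting the asymptotic ends at $e_{x'},h_{x'}$. So $C\cap(\bR\times I\times T^2)=\emptyset$ and in particular $\cS_x=\emptyset$.

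The main technical obstacle is the unique continuation / trivial cylinder exclusion step, which rests on carefully distinguishing the Morse-Bott orbits of $\pi_\bI^*a$ from the non-degenerate orbits $e_{x'},h_{x'}$ of $\lambda$; the argument must also be carried out for $J$-holomorphic currents rather than single curves, but this works component-wise since $F$ and cylinder intersection numbers are additive over components of $\cC$.
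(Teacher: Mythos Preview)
Your argument is correct, but it takes a different and somewhat heavier route than the paper for the equality case. Both proofs start the same way: Stokes' theorem on $C\cap(\bR\times[x,x_0]\times T^2)$ gives the identity
\[
\langle a(x_0)-a(x),[\cS_{x_0}]\rangle \;=\; \int_{C\cap(\bR\times[x,x_0]\times T^2)} d\lambda \;\ge 0,
\]
and differentiating yields $\langle a'(x),\sigma_{\bcR}(x)\rangle \ge 0$. Where the two diverge is in showing that equality at $x_0$ forces $\cS_{x_0}=\emptyset$. The paper does this by pure calculus and the genericity hypothesis on $a$: if $[\cS_{x_0}]\neq 0$ then $[\cS_{x_0}]$ is an integer multiple of $a'(x_0)^\vee$, so $a'(x_0)$ has rational slope and genericity gives $a'\times a''\neq 0$ there; hence $x\mapsto \langle a'(x),[\cS_{x_0}]\rangle$ has nonzero derivative at $x_0$ and therefore changes sign, contradicting the nonnegativity just established. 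This handles the nontrivial case without invoking intersection positivity or unique continuation. Your positivity-of-intersection argument with the trivial Reeb cylinders is valid (the cylinders are $J$-holomorphic for \emph{any} $\lambda$-admissible $J$, since $J\partial_s=R=\bar R$ on the unperturbed region --- you wrote $\bar J$, which is a slip), but it is more machinery than needed.

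Two small gaps to note. First, your exclusion of trivial-cylinder components relies on the ends being at nondegenerate orbits $e_{x'},h_{x'}$; but the lemma also allows $\lambda=\pi_\bI^*a$, in which case the correct (and simpler) reason is just the hypothesis that $C$ has no end at $x$. Second, you establish $(a')^\vee\times\sigma_{\bcR}\ge 0$ only at unperturbed $x$ with no end; the paper explicitly extends this to all $x$ by continuity together with the fact that $a'\times a''$ does not vanish on each $U_x$, and you should do the same.
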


\begin{proof}
If $\lambda$ is unperturbed near $\{x_0\} \times T^2$ and $C$ has no end at $x_0$, differentiate
\begin{equation}\label{eq:pos}
\<a(x) - a(x_0),[\cS_{x_0}]\> = \int_{C \cap (\bR \times [x,x_0] \times T^2)} d\lambda
\end{equation}
near $x_0$ to get $a'(x)^\vee \times \sigma_\bcR(x) = \<a'(x),\sigma_\bcR(x)\> \ge 0$ with equality only if $\cS_{x_0} = \emptyset$ by genericity of $a$. The inequality extends to all $x$ by continuity and since $a' \times a''$ does not vanish on $U_x$.
\end{proof}

\subsection{Indecomposability}\label{section:indecomp}

Let $\lambda$ be a good perturbation of $\pi_\bI^*a$ for a generic orbital moment map $a$ and $J$ a generic $\lambda$-admissible almost complex structure.

\begin{prop}\label{prop:indecomposable}
Let $\alpha$ and $\beta$ be orbit sets of $\lambda$ and $C \in \cM_1(\ab)$. Then its non-trivial component $C' \in \cM_1(\alpha',\beta')$ has genus 0 and $\cR_\ab = \cT_1 \cR_{\alpha',\beta'} \cT_2$ where $\cR_{\alpha',\beta'}$ is indecomposable and $\cT_i$ are trivial. Moreover, we have a bijection
\[ \cM_1(\alpha,\beta) \isom \cM_1(\alpha',\beta'). \]
\end{prop}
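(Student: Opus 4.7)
The plan is to reduce the statement to the standard ECH structure theorem recalled in §\ref{section:ech} together with the structural lemmas of §\ref{section:positivity}. By standard ECH theory, for generic $J$ any $\cC \in \cM_1(\alpha,\beta)$ decomposes as $C' \sqcup \cT$, where $\cT$ is a union of trivial cylinders over a common orbit subset $\gamma$ of $\alpha$ and $\beta$ and $C'$ is an irreducible embedded $J$-holomorphic curve with $\ind(C') = I(C') = 1$ satisfying the elliptic partition conditions; set $\alpha' = \alpha \setminus \gamma$ and $\beta' = \beta \setminus \gamma$, so that $C' \in \cM_1(\alpha',\beta')$.

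For the indecomposability of $\cR_{\alpha',\beta'}$, Lemma \ref{lem:apos} already gives that $\bcR_{\alpha',\beta'}$ is $a$-positive. Suppose for contradiction that $\cR_{\alpha',\beta'}$ decomposes at some $x_0$. By the definition of decomposition, $\sigma_{\bcR_{\alpha',\beta'}}$ vanishes immediately to one side of $x_0$; replacing $x_0$ with a generic nearby point on that side (which lies outside the finite set $\supp\bcR_{\alpha',\beta'} \cup \Xi_L$), we may assume $\lambda$ is unperturbed near $\{x_0\} \times T^2$ and that $C'$ has no end at $x_0$. Then Lemma \ref{lem:apos} applied in the converse direction forces $C' \cap (\bR \times \{x_0\} \times T^2) = \emptyset$, so $C'$ splits topologically into pieces over $[0,x_0]$ and $[x_0,1]$, contradicting irreducibility. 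For the full factorization of $\cR_{\alpha,\beta}$, each trivial cylinder in $\cT$ contributes an indecomposable trivial factor concentrated at a single $x$; by the uniqueness of the indecomposable factorization (Definition \ref{def:region}(b)), the unique non-trivial factor of $\cR_{\alpha,\beta}$ is $\cR_{\alpha',\beta'}$, and grouping the remaining trivial factors on either side of $\supp \cR_{\alpha',\beta'}$ yields $\cT_1$ and $\cT_2$.

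For genus zero I apply the relative adjunction formula $c_\tau(C') = \chi(C') + Q_\tau(C') + w_\tau(C')$. Since $\xi$ is trivialized over $Y$ (§\ref{section:mb}), $c_\tau(C') = 0$; saturation $\ind(C') = I(C')$, combined with the ECH writhe bound and the elliptic partition conditions at each end of $C'$, pins down $w_\tau(C')$ and, through Proposition \ref{prop:index}, $Q_\tau(C')$ combinatorially, yielding $\chi(C') = 2 - n$ where $n$ is the number of ends of $C'$, and hence $g(C') = 0$. Finally, the bijection $\cM_1(\alpha,\beta) \isom \cM_1(\alpha',\beta')$ is $\cC \mapsto C'$ with inverse $C'' \mapsto C'' \sqcup \cT_\gamma$, where $\cT_\gamma$ is the union of trivial cylinders over $\gamma$. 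Since $\gamma$ is disjoint from $\alpha' \cup \beta'$ as an orbit set, a standard asymptotic linking argument gives $C'' \cap \cT_\gamma = 0$, so ECH index additivity yields $I(C'' \sqcup \cT_\gamma) = I(C'') = 1$.

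The main obstacle I anticipate is the genus zero calculation: while the ingredients (relative adjunction, writhe bound, and partition conditions) are standard, their combination in this toric Morse--Bott setting requires careful per-end accounting of writhe contributions at each elliptic orbit. A secondary subtlety is verifying that the pair $(\alpha',\beta')$ is intrinsic to $(\alpha,\beta)$ so that the bijection statement is well-posed; this follows from the uniqueness of the indecomposable factorization, which ensures every $\cC \in \cM_1(\alpha,\beta)$ yields the same $(\alpha',\beta')$.
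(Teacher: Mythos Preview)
Your indecomposability argument has a genuine gap. The claim that ``$\sigma_{\bcR_{\alpha',\beta'}}$ vanishes immediately to one side of $x_0$'' is not what decomposability gives you in general, and even when it does hold, empty slice plus irreducibility is not a contradiction unless $C'$ has ends on \emph{both} sides of your chosen $x_0'$. The critical case you miss is a \emph{local} factor at the boundary of $\supp\cR'$: suppose $\cR' = \cR_1\cR_2$ with $\cR_1$ a local bigon at $x_0 = \min(\supp\cR')$. Then $\sigma_{\cR'}\equiv 0$ on $(0,x_0)$, and Lemma~\ref{lem:apos} indeed gives $C'\cap(\bR\times\{x_0-\epsilon\}\times T^2)=\emptyset$, but every end of $C'$ lies at $x\ge x_0$, so $C'$ sits entirely in $\bR\times[x_0-\epsilon,1]\times T^2$ and remains connected. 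Nothing in your argument excludes $C'$ from having both a positive and a negative end at the same orbit over $x_0$. The paper rules this out by two ingredients you do not invoke: first, the Fredholm index is rewritten via the partition conditions and the explicit $CZ$ values of the toric orbits as
\[
1=\ind(C')=2g-2+\sum_x\bigl(m_{\cR'}(x)+CZ_{\cR'}(x)\bigr),
\]
which simultaneously forces $g=0$ and limits $\cR'$ to one non-local factor plus at most one local factor; second, a winding-number argument using the HWZ asymptotic decay bound $\ge\lceil CZ_\tau(\check e_{x_0})/2\rceil$ produces a contradiction at the putative local factor. This second step is the real content and has no analogue in your sketch.

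Your treatment of the factorization $\cR_{\alpha,\beta}=\cT_1\cR_{\alpha',\beta'}\cT_2$ and the bijection is also incomplete. You assert that $\gamma$ is disjoint from $\alpha'\cup\beta'$ as an orbit set, but a priori an orbit can appear both in $\gamma$ (via trivial cylinders) and among the ends of $C'$; equivalently, nothing yet prevents a trivial cylinder over some $x_0\in\operatorname{int}(\supp\cR')$. The paper handles this by observing that $I(\cR_{\alpha,\beta})=I(\cR')=1$ forces $Q_{\cR_{\alpha,\beta}}(x_0)=Q_{\cR'}(x_0)$, hence $v_{\cR}(x_0)\times\sigma_{\cR}(x_0)=0$, and then Lemma~\ref{lem:pos}(b) places $x_0\notin\operatorname{int}(\supp\cR')$. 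Only after this does the trivial/non-trivial split of $\cR_{\alpha,\beta}$ line up with the factorization, and the surjectivity of $C''\mapsto C''\cup\cT$ follows because the non-trivial component of any element of $\cM_1(\alpha,\beta)$ is forced to have the \emph{same} $(\alpha',\beta')$. Your adjunction route to $g=0$ is plausible but redundant once the index identity above is in hand; in any case it does not supply the missing local-factor exclusion.
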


\begin{proof}
By the partition condition, we can rewrite \eqref{eq:fredind} for $C'$ as
\begin{align}\label{eq:indecomp}
1= \ind(C') &=2g - 2 +  \sum (1+ CZ_\tau(\rho_i^+)) + \sum (1-CZ_\tau(\rho_j^-)) \nonumber \\
&= 2g-2 + \sum (m_{\cR'}(x) + CZ_{\cR'}(x)).
\end{align}
where $g$ is the genus of $C'$ and $\cR'=\cR_{\alpha',\beta'}$. Write $\supp\cR'=[x_1,x_2]$. If $x_1=x_2$, $CZ_{\cR'}(x_1)=I(\cR')=1$ so $2g + m(\cR')\le 2$, forcing $g=0$ and $m(\cR')=2$. Otherwise, for each factor $\cR_i'$ of $\cR'$ and $x \in \partial(\supp\cR_i')$, $m_{\cR'_i}(x)+CZ_{\cR'_i}(x) \ge 1$, so $g=0$ and $\cR'$ contains one non-local factor and possibly one local factor. We draw a contradiction when it contains a local factor $\cR'_{i_0}$. By symmetry, we only argue for the case $\supp\cR'_{i_0}=\{x_1\}$ and $c_{\cR'}(x_1)=1$, and by $SL_2(\bZ)$-symmetry, assume $v_{\cR'}(x_1)=(0,1)$. Let $S:= C' \cap ([-s,s] \times [0,x_1+\epsilon] \times T^2)$ for generic $s, 1/\epsilon \gg 0$. Since $I(\cR') = I(\cR_{\check e_{x_1},\check e_{x_1}}\cR')$, $S$ does not intersect $\bR \times \check e_{x_1}$ by \eqref{eq:trivcyl} and it maps to 
\[ Y' := ([0,x_1+\epsilon] \times S^1)/ (\{0\} \times S^1)  \setminus \{(x_1,\theta_e)\} \]
by $(s,x,t_1,t_2) \mapsto (x,t_1)$. Let $\cS_1, \cdots, \cS_n$ denote the boundary components of $S$ with $\cS_1\subset \{-s\} \times Y$ corresponding to the unique negative end of $C'$ at $\check e_{x_1}$. Since (the two) $\cS_i \subset \{s\} \times Y$ maps to a neighbourhood of $(x_1,\theta_h)$ and $\sigma_{\cR'}(x_1+\epsilon) = (0,-1)$, the total degree of $\cup_{i=2}^n \cS_i$ mapping to $Y' \simeq S^1$ is zero. Hence, $\cS_1$ has winding number zero around $(x_1,\theta_e)$, contradicting its lower bound of $\ceil{CZ_\tau(\check e_{x_1})/2}$ from the decay condition \cite{hwz}.

If $C$ contains a trivial cylinder at $x_0$, $I(\cR_\ab)=I(\cR')$ implies $Q_{\cR_\ab}(x_0) = Q_{\cR'}(x_0)$. By Lemma \ref{lem:pos}(b), $x_0 \not \in int(\supp\cR')$ and $\cR_\ab = \cT_1\cR'\cT_2$ for some $\cT_i$. To see the bijection, each $\cC \in \cM_1(\alpha',\beta')$ gives a distinct member of $\cM_1(\ab)$ by unioning with a trivial current. This mapping is onto since the nontrivial component of any $C \in \cM_1(\ab)$ cannot have both a positive and a negative end at $x \in \supp(\cT_i)$ by the above.
\end{proof}

\subsection{Classification of $\cR_\ab$ with nonempty $\cM_1(\ab)$}\label{section:classify}

\begin{definition}\label{def:mbcomb}
\begin{enumerate}[(a)]
\item The \defn{(local) Morse-Bott ECH index} of a region $\bcR$ is $I(\cR_{\min})$ (or $I_{\cR_{\min}}$) where $\cR_{\min}$ is the minimal decoration of $\bcR$. The \defn{(local) Morse-Bott ECH index} of a decorated region is that of its underlying undecorated region.
\item The \defn{loose multiplicity} $m^l_\bcR(x)$ of $\bcR = (\bcP^0,\bcP^1)$ at $x$ is $m_{\bcP^0}(x)$ if $c_\bcR(x)=1$ and $m_{\bcP^1}(x)$ otherwise. We also write $m^l(\bcR) = \sum_x m^l_{\bcR}(x)$.
\end{enumerate}
\end{definition}

\begin{figure}
\begin{center}
\begin{tikzpicture}
\begin{scope}[scale=1.5]
\draw[->,color=red] (0,0) -- (1,0) node[midway,below] {$\check h$} -- (1,1) node[midway,right] {$\check h$} -- (0,1) node[midway,above] {$\check h$};
\draw[->,color=blue] (0,0) -- (0,.9) node[midway,left] {$\check e$};
\draw[densely dotted,->] (1,.96) -- (0,.96);
\draw[densely dotted,->] (1,.04) -- (0,.04);
\end{scope}
\begin{scope}[xshift=44ex,scale=1.5]
\draw[color=red,->] (0,0) -- (1,0) node[midway,below] {$\check e$};
\draw[color=blue,->] (0,.04) -- (1,.04) node[midway,above] {$\check h$};
\end{scope}
\begin{scope}[xshift=17ex,scale=1]
\draw (0,2) ellipse(.1 and .05) node[above,color=red] {$\check h$};
\draw (.8,2) ellipse(.1 and .05) node[above,color=red] {$\check h$};
\draw (1.6,2) ellipse(.1 and .05) node[above,color=red] {$\check h$};
\draw (.8,0) ellipse(.1 and .05) node[below,color=blue] {$\check e$};
\draw (0.1,2) to[out=-90,in=180] (.45,1.3) to[out=0,in=-90] (.7,2);
\draw (0.9,2) to[out=-90,in=180] (1.15,1.3) to[out=0,in=-90] (1.5,2);
\draw (-0.1,2) to[out=-90,in=135] (.3,1) to[out=-45,in=90] (.7,0);
\draw (1.7,2) to[out=-90,in=45] (1.3,1) to[out=-135,in=90] (.9,0);
\draw[densely dotted] (.4,.5) -- (.4,1.5);
\draw[densely dotted] (1.2,.5) -- (1.2,1.5);
\draw[very thin] (-.3,.4) -- (1.9,1.6);
\draw[very thin] (-.3,1.6) -- (1.9,.4);
\end{scope}
\begin{scope}[xshift=60ex,scale=1]
\draw (0,2) ellipse(.1 and .05) node[above,color=red] {$\check e$};
\draw (0,0) ellipse(.1 and .05) node[below,color=blue] {$\check h$};
\draw (-0.1,2) to[out=-90,in=100] (0,1) to[out=-80,in=90] (-.1,0);
\draw (0.1,2) to[out=-90,in=100] (.2,1) to[out=-80,in=90] (.1,0);
\draw (-0.1,2) to[out=-90,in=90] (-.25,1) to[out=-90,in=90] (-.1,0);
\draw (0.1,2) to[out=-90,in=90] (-.05,1) to[out=-90,in=90] (.1,0);
\end{scope}
\end{tikzpicture}
\end{center}
\vspace{-.3in}
\caption{Examples of case (c2) and (c3)}
\label{fig:diff-nonexamples}
\end{figure}
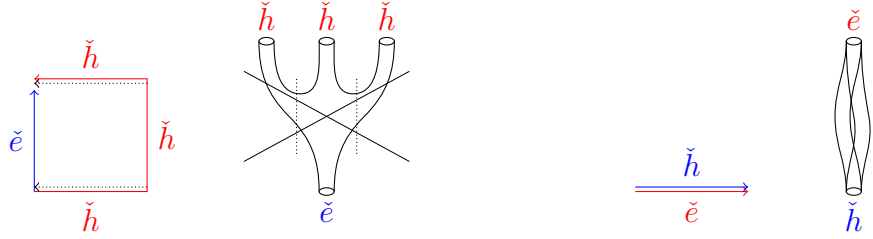

Let $\alpha$ and $\beta$ be orbit sets of $\lambda$ as in \S\ref{section:indecomp}. To study $\cM_1(\alpha,\beta) \neq \emptyset$, it suffices to assume $\cR=\cR_\ab$ is $a$-positive and indecomposable by Lemma \ref{lem:apos} and Proposition \ref{prop:indecomposable}. Let $\cR_{\min}$ be the minimal decoration of the underlying undecorated region of $\cR$ so $I(\cR) \ge \bar I(\cR)=I(\cR_{\min}) \ge -1$ by Lemma \ref{lem:pos}(a). Hence, we classify $\cR$ by:
\begin{enumerate}[(c1)]
\item $\bar I(\cR) = 0$: By positivity of $I_{\cR_{\min}}$, $v_\cR \times \sigma_\cR \le 1$. If some $\sigma_\cR(x)$ is non-primitive, $Q_\cR \equiv 0$ so $\sum (CZ_\cR(x)+m_\cR(x)) \ge 4$, violating \eqref{eq:indecomp}. Thus, $\cR$ is minimally positive and almost minimally decorated. We claim $\#(\cM_1(\alpha,\beta)/\bR) = 1$.
\item $\bar I(\cR) = 1$: By positivity of $I_{\cR_{\min}}$, there is a unique $x_0 \in int(\supp\cR)$ such that either (i) $m^l_\cR(x_0) = 1$ or (ii) $v_\cR(x_0) \times \sigma_\cR(x_0) = 2$. In case (ii), one of $\sigma_\cR(x_0\pm \epsilon)$ is twice a primitive vector so $m^l_\cR(x_1)=2$ for some $x_1 \in \partial(\supp\cR)$. Either way, $m^l(\cR) = 3$ and we claim $\cM_1(\ab)$ is empty if $(\lambda,J)$ is close to $(\pi_\bI^*a,\bar J)$.
\item $\bar I(\cR) = -1$: $\cR$ is a local bigon at $x_0$. We claim $J$-holomorphic curves in $\cM_1(\ab)$ correspond to index 1 Morse flows of $f_{x_0}$ and, thus, exist in pairs.
\end{enumerate}
We deal with (c2) and (c3) (see Figure \ref{fig:diff-nonexamples}) in \S \ref{section:mbargument} and (c1) in the rest of \S \ref{section:main}. 

\subsection{A Morse-Bott argument}\label{section:mbargument}

\begin{definition}
Let $\bcR = (\bcP^0,\bcP^2)$, $\bcR^1 = (\bcP^0,\bcP^1)$ and $\bcR^2=(\bcP^1,\bcP^2)$ be three regions. The \defn{sharing multiplicity} between $\bcR^1$ and $\bcR^2$ at $x$ is
\[ m^s_{\bcR^1,\bcR^2}(x) := m_{\bcP^1}(x) + m^{triv}_{\bcR}(x) - m^{triv}_{\bcR^1}(x) - m^{triv}_{\bcR^2}(x) \]
where $m^{triv}_{\bcR}(x)$ denotes the number of local bigon factors of $\bcR$ at $x$. We also write $m^{triv}(\bcR) = \sum_x m^{triv}_\bcR(x)$ and $m^s(\bcR^1,\bcR^2) = \sum_x m^s_{\bcR^1,\bcR^2}(x)$.

\end{definition}
When $\bcP^0,\bcP^1$ and $\bcP^2$ are drawn with each starting at $0 \in \ft^2$, $m^s_{\bcR^1,\bcR^2}(x)$ is the number of edges of $\cP^1$ at $x$ ``sandwiched'' between non-local factors of $\bcR^1$ and $\bcR^2$.

\begin{lem}\label{lem:complexity}
For $\bcR,\bcR^1$ and $\bcR^2$ as above, $m_{\bcR^1,\bcR^2}^s \ge 0$. If $\bcR$ is indecomposable and non-local,
\[ \bar I(\bcR)- \sum \bar I(\bcR^i_k) = m^s(\bcR^1,\bcR^2) = \sum m^l(\bcR^i_k) - m^l(\bcR) \]
where each sum is over all non-local factors $\bcR^i_k$ of $\bcR^i$ for $i=1,2$. In particular, if $\bar I(\bcR) =0$ and $m^l(\bcR)=2$, then either $\bcR^1$ or $\bcR^2$ is local.
\end{lem}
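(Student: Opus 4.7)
The strategy is to extract a clean pointwise-additive formula for $\bar I$ and then deploy it both for the factorization of $\bcR^i$ along $x$ and for the composition of $\bcR^1$ with $\bcR^2$ through $\bcP^1$. Case analysis on the minimal-decoration rules yields, for any region $\bcR'$ and any $x$, $CZ_{\cR'_{\min}}(x) = m^l_{\bcR'}(x) - m_{\bcR'}(x)$: at convex $x$ the first path is all-hyperbolic and the second all-elliptic, giving $-m_{\mathrm{second}} = m^l - m$; at concave $x$ the roles flip. Summing,
\[ \bar I(\bcR') = Q(\bcR') + m^l(\bcR') - m(\bcR'), \]
where $Q(\bcR') := \sum_x Q_{\bcR'}(x)$ is the signed polygonal area and $m(\bcR')$ is the sum of the total multiplicities of both paths. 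A local bigon factor then contributes $Q = 0$, $m^l = 1$, $m = 2$, hence $\bar I = -1$.

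For a factorization $\bcR' = \bcR'_1 \cdots \bcR'_d$, the slice class $\sigma_{\bcR'}$ vanishes at each splitting point (each factor is itself a region, so its two paths are homologous), hence $\sigma_{\bcR'_k}$ agrees with $\sigma_{\bcR'}$ on $\supp(\bcR'_k)$; so $Q$ is pointwise-additive, while $m$ and $m^l$ trivially are, making $\bar I$ additive under factorization. Separating local bigon factors from non-local ones then gives
\[ \bar I(\bcR^i) = \sum_{k\ \mathrm{non\text{-}local}} \bar I(\bcR^i_k) - m^{triv}(\bcR^i), \qquad m^l(\bcR^i) = \sum_{k\ \mathrm{non\text{-}local}} m^l(\bcR^i_k) + m^{triv}(\bcR^i). \]
For the composition $\bcR$ of $\bcR^1$ and $\bcR^2$, signed area is additive so $Q(\bcR) = Q(\bcR^1) + Q(\bcR^2)$; direct calculation gives $m(\bcR^1) + m(\bcR^2) = m(\bcR) + 2m(\bcP^1)$; and case analysis on $c_\bcR(x)\in\{\pm 1,0\}$ gives the pointwise identity $m^l_{\bcR^1}(x) + m^l_{\bcR^2}(x) = m^l_\bcR(x) + m_{\bcP^1}(x)$. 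Feeding these into the formula for $\bar I$ yields $\bar I(\bcR) - \bar I(\bcR^1) - \bar I(\bcR^2) = m(\bcP^1)$. Substituting the factorization identities and using $m^{triv}(\bcR) = 0$ (from $\bcR$ being indecomposable and non-local) delivers both claimed equalities simultaneously.

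For non-negativity of $m^s$, I argue combinatorially at each $x$. Each local bigon factor of $\bcR^1$ consumes one edge of $\bcP^0$ paired with a matched edge of $\bcP^1$; each local bigon of $\bcR^2$ consumes a matched edge of $\bcP^1$ paired with one of $\bcP^2$. If the same $\bcP^1$-edge is consumed on both sides, the corresponding $\bcP^0$- and $\bcP^2$-edges inherit matching $v,c$ through $\bcP^1$ and form a local bigon factor of $\bcR$. Inclusion-exclusion on the $m_{\bcP^1}(x)$ available $\bcP^1$-edges yields $m^{triv}_{\bcR^1}(x) + m^{triv}_{\bcR^2}(x) - m^{triv}_\bcR(x) \leq m_{\bcP^1}(x)$, i.e.\ $m^s(x) \geq 0$.

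For the final statement, adding the two equalities gives $\sum_k (\bar I + m^l)(\bcR^i_k) = \bar I(\bcR) + m^l(\bcR) = 2$, summed over non-local factors. If both $\bcR^1$ and $\bcR^2$ were non-local, at least two non-local factors would contribute. The main obstacle I anticipate is to show each non-local indecomposable region arising here contributes $\bar I + m^l \geq 2$: invoking positivity inherited from the ambient context where this lemma is applied, Lemma \ref{lem:pos}(a) supplies $\bar I \geq 0$, while a boundary-support analysis (at each endpoint of $\supp(\bcR'_k)$ the vanishing of $v \times \sigma$ forces the appropriate convexity via Definition \ref{def:region}(c), contributing one to $m^l$) supplies $m^l \geq 2$. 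The sum would then be at least $4 > 2$, contradicting the equation.
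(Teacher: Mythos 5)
Your derivation of the two displayed equalities is correct and takes essentially the paper's route: both hinge on the pointwise identities $m^l_{\bcR} = m^l_{\bcR^1} + m^l_{\bcR^2} - m_{\bcP^1}$ and $\bar I(\bcR) = \bar I(\bcR^1) + \bar I(\bcR^2) + m(\bcP^1)$, which you reach transparently via the auxiliary formula $\bar I = Q + m^l - m$ (the paper states the two identities directly and leaves their verification to the reader). One small slip: to obtain $\sum_k (\bar I + m^l)(\bcR^i_k) = \bar I(\bcR) + m^l(\bcR)$ you must \emph{subtract}, not add, the two displayed equalities.

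Your hesitation about the final clause is well placed, and you should treat it as a real hypothesis rather than something to patch later. As literally stated --- with no positivity assumption on $\bcR^1,\bcR^2$ --- the ``in particular'' conclusion is false: take $x_0 < x' < x_1$ and let each $\bcP^i$ be a single unit edge $(1,0)$, with $\bcP^0$ supported at $x_0$ with $c = 1$, $\bcP^1$ at $x'$ with either convexity, and $\bcP^2$ at $x_1$ with $c = -1$. Then $\bcR = (\bcP^0,\bcP^2)$ is indecomposable and non-local with $\bar I(\bcR) = 0$ and $m^l(\bcR) = 2$, yet $\bcR^1$ and $\bcR^2$ are both non-local. (Exactly one of them fails positivity, depending on the sign of $c$ at $x'$.) In every application in the paper $\bcR^1$ and $\bcR^2$ are $a$-positive, hence positive by Lemma~\ref{lem:pos}(b), so each non-local factor $\bcR^i_k$ is positive by Definition~\ref{def:region}(c); under that standing hypothesis your sketch closes the argument as you say: $\bar I(\bcR^i_k) \ge 0$ by Lemma~\ref{lem:pos}(a), and $m^l(\bcR^i_k) \ge 2$ since at each of the two boundary points of $\supp(\bcR^i_k)$ the relation $v \times \sigma = 0$ together with Definition~\ref{def:region}(c) forces convexity on the first path or concavity on the second, each contributing one to $m^l$. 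Two non-local factors would then yield $\sum(\bar I + m^l) \ge 4 > 2$.
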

\begin{proof}
The first assertion is obvious. For the second, use
\[ m^l_{\bcR} - m^{triv}_{\bcR} =  \sum m^l_{\bcR^i_k}- m^s_{\bcR^1,\bcR^2} \]
and
\[ \bar I(\bcR)  + m^{triv}(\bcR) = \sum \bar I(\bcR^i_k) + m^s(\bcR^1,\bcR^2) \]
which follow from $m^l_{\bcR} = m^l_{\bcR^1} + m^l_{\bcR^2} - m_{\bcP^1}$ and $\bar I(\bcR) = \bar I(\bcR^1) + \bar I(\bcR^2) + m(\bcP^1)$.
\end{proof}

This is the only section where the choice of $f_x$ and $\bar J$ in \S \ref{section:mb} plays a role, due to:
\begin{lem}\label{lem:thetaconstraint}\cite[Proposition 10.16]{t3}
If $\bar C$ is a $\bar J$-holomorphic curve with positive ends at $m_i^+$-fold covers of $\brho_{x_i^+}(\theta^+_i)$ and negative ends at $m_j^-$-fold covers of $\brho_{x_j^-}(\theta^-_j)$,
\[ \Theta(\bar C) := \sum  m_i^+\theta_i^+ - \sum m_j^-\theta_j^- =0 \in \bR/\bZ. \]
\end{lem}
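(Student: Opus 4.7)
I would adapt the argument of \cite[Proposition 10.16]{t3} from the $(T^3,\lambda_n)$ setting to the present one, interpreting $\Theta(\bar C) \bmod \bZ$ as the limit of boundary integrals on a compact truncation of $\bar C$, and forcing vanishing via $\bar J$-holomorphicity.

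Near each end of $\bar C$ at an $m_i^\pm$-fold cover of $\brho_{x_i^\pm}(\theta_i^\pm)$, the primitive vector $v_a(x_i^\pm) = (p_i^\pm,q_i^\pm)$ is locally constant on $U_{x_i^\pm}\times T^2$, so $\Theta_{x_i^\pm}$ and its closed differential $d\Theta_{x_i^\pm} = q_i^\pm dt_1 - p_i^\pm dt_2$ are well-defined there. I would truncate $\bar C$ to a compact $\bar C_R$ whose boundary consists of circles $\gamma_i^\pm$ close to the orbit covers, and on each $U_{x_i^\pm}\times T^2$ introduce a closed $1$-form $d\Phi_{x_i^\pm}$ dual to $d\Theta_{x_i^\pm}$ (pointing in the Reeb direction, normalized so that $\int_{\gamma_i^\pm} d\Phi_{x_i^\pm}\to m_i^\pm$). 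Because $\Theta_{x_i^\pm}$ limits uniformly to the constant $\theta_i^\pm$ on $\gamma_i^\pm$,
\[
\sum_i \int_{\gamma_i^+}\Theta_{x_i^+}\, d\Phi_{x_i^+} \;-\; \sum_j \int_{\gamma_j^-}\Theta_{x_j^-}\, d\Phi_{x_j^-} \;\longrightarrow\; \sum_i m_i^+\theta_i^+ \;-\; \sum_j m_j^-\theta_j^- \pmod{\bZ},
\]
and by Stokes this boundary sum agrees with $\int_{\bar C_R} d\Theta\wedge d\Phi$ up to corrections from failures of patching.

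To identify the bulk integral as an integer modulo $\bZ$, I would use the following $\bar J$-compatibility identity, obtained by direct computation from $\bar J(\partial_s)=\bar R$ and $\bar J(\partial_x)=\bar Q$ together with $d\Theta_x(\bar R)=0$ and $d\Theta_x(\bar Q)=-\<v_a(x),a(x)\>$:
\[
\bar J^*(d\Theta_x) \;=\; -\<v_a(x),a(x)\>\, dx.
\]
Combined with $\bar J$-holomorphicity of $\bar C$, this makes $\bar C^*(d\Theta_x)$ the real part of a holomorphic $1$-form on $\bar C$, and forces $d\Theta\wedge d\Phi|_{\bar C}$ to be a non-negative multiple of the pullback of the standard integral area form on $T^2$. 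Hence $\int_{\bar C_R} d\Theta\wedge d\Phi$ limits to the algebraic intersection number of $\bar C$ with a generic fiber $\{\mathrm{pt}\}\times T^2$, which is an integer.

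The main obstacle is globalizing the pair $(\Theta_{x},\Phi_{x})$ across $x$ where $v_a$ jumps (and across the long segments of $\bar C$ projecting to $x\notin \Xi_L$, where $v_a$ is not defined), since neither form extends canonically on all of $Y$. In \cite{t3} this is handled by exploiting the explicit sinusoidal form of $a$ on $T^3$; for a general orbital moment map I expect one must exploit $SL_2(\bZ)$-compatibility between successive constant values of $v_a$ along $\bar C$, together with the $\bar J$-compatibility identity above, so that the ambiguities in patching contribute only exact forms with integer periods on $\bar C_R$. Putting all of this together yields $\Theta(\bar C)=0\in\bR/\bZ$.
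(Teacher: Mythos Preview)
Your proposal leaves a genuine gap: you correctly identify that the local forms $d\Theta_x$ and $d\Phi_x$ do not patch across changes in $v_a$, and you do not resolve this beyond ``I expect one must exploit $SL_2(\bZ)$-compatibility.'' The paper's proof shows that this obstacle is an artifact of your setup, not of the problem, and bypasses it entirely by working with the \emph{global} closed $2$-form $dt_1\wedge dt_2$ on $\bR\times Y$, which requires no patching and no reference to $v_a$ or $\Xi_L$ whatsoever.

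The actual argument consists of two short observations. First, by the definition of $\Theta_x$ (the $pq/2$ shift is exactly what makes this work), the $dt_1\wedge dt_2$-area of a $2$-chain in $T^2$ bounding $\brho_x(\theta)$ against a representative of its class supported on $(S^1\times\{0\})\cup(\{0\}\times S^1)$ equals $\theta\in\bR/\bZ$; hence $\Theta(\bar C)=\int_{\bar C}dt_1\wedge dt_2\pmod\bZ$. Second, from $\bar J(\partial_s)=\bar R$ and $\bar J(\partial_x)=\bar Q$ one checks that the coframe dual to $(\bar R,\bar Q)$ is $(a,\,a'/(a\times a'))$, so $d\bar R\wedge d\bar Q=(a\wedge a')/(a\times a')=dt_1\wedge dt_2$, and on any $\bar J$-complex line $(v,\bar Jv)$ one has $ds\wedge dx=d\bar R\wedge d\bar Q$. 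Thus on $\bar C$ the form $dt_1\wedge dt_2$ coincides with $ds\wedge dx=d(-x\,ds)$, whose primitive vanishes on $\partial\bar C_n\subset\{s=\pm n\}$, giving $\int_{\bar C}dt_1\wedge dt_2=0$ outright.

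Your identity $\bar J^*(d\Theta_x)=-\langle v_a(x),a(x)\rangle\,dx$ is a correct local shadow of $dt_1\wedge dt_2=ds\wedge dx$, but by splitting the global $2$-form into $(p,q)$-dependent $1$-forms you have manufactured a globalization problem that does not exist. In particular, nothing like $SL_2(\bZ)$-compatibility between successive values of $v_a$ is needed, and the claim that \cite{t3} ``handles this by exploiting the explicit sinusoidal form of $a$'' is a misreading: the same global-form argument works there verbatim.
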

\begin{proof}
If a surface $Z \subset T^2$ bounds $\brho_x(\theta)$ and a representative of $[\brho_x(\theta)]$ contained in $(S^1 \times \{0\}) \cup (\{0\} \times S^1)$, then $\int_Zdt_1dt_2  = \theta \in \bR/\bZ$. Moreover, when restricted to $(v,\bar Jv)$ for $v \in T(\bR \times Y)$, $dsdx$ agrees with $d\bar Rd\bar Q =  a \wedge a'/(a\times a') = dt_1dt_2$. Hence,
\[ \Theta(\bar C) = \int_{\bar C} dt_1dt_2 = \lim_{n \to \infty} \int_{\bar C_n}dt_1dt_2  = \lim_{n \to \infty}\int_{\bar C_n} dsdx = \lim_{n \to \infty}\int_{\partial \bar C_n}(-xds) = 0 \]
where $\bar C_n$ denotes $\bar C \cap ([-n,n] \times Y)$.
\end{proof}

\begin{prop}\label{prop:diff-mb}
Let $\lambda$ be a good perturbation of $\pi_\bI^*a$ and $J$ a generic $\lambda$-admissible almost complex structure. Let $\alpha$ and $\beta$ be orbit sets of $\lambda$ and suppose $\cR_\ab$ is indecomposable with $I(\cR_\ab)=1$ and $\bar I(\cR_\ab)=\pm 1$. Then, $\#(\cM_1(\alpha,\beta)/\bR) = 0$ for $(\lambda,J)$ sufficiently close to $(\pi_\bI^*a,\bar J)$.
\end{prop}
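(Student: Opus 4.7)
My plan is to argue by contradiction. Assuming $\#(\cM_1(\alpha,\beta)/\bR) \neq 0 \in \bZ/2$ for arbitrarily small good perturbations, I would choose a sequence $(\lambda_n,J_n) \to (\pi_\bI^*a,\bar J)$ with $C_n \in \cM_1(\alpha,\beta)$. By SFT/Morse-Bott Gromov compactness (following Bourgeois), after passing to a subsequence the $C_n$ converge to a $\bar J$-holomorphic building $\bar C = (\bar C^1,\ldots,\bar C^l)$ as defined in \S\ref{section:mb}, whose levels are matched by downward gradient flows of $\{f_x\}_{x \in \Xi_L}$. Projecting the intermediate orbit sets to lattice paths $\bcP_\alpha = \bcP^0,\bcP^1,\ldots,\bcP^l = \bcP_\beta$ yields a factorization $\bcR_{\alpha,\beta} = \bcR^1 \cdots \bcR^l$ with $\bcR^i := (\bcP^{i-1},\bcP^i)$. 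Each nonlocal factor of each $\bar C^i$ is constrained by Lemma \ref{lem:thetaconstraint}, and the Morse-Bott indices satisfy a complexity bound obtained by iterating Lemma \ref{lem:complexity}.

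For case (c3), $\bcR_{\alpha,\beta}$ is a local bigon at $x_0$, so every end of every level of $\bar C$ lies on $\brho_{x_0}$. Differentiating the positivity identity \eqref{eq:pos} from Lemma \ref{lem:apos} forces $\supp \bar C^i \subset \bR \times \{x_0\} \times T^2$. On this slice $\bar J$ is translation-invariant and every $\bar J$-holomorphic curve with bounded asymptotics is a $T^2$-translate of a trivial cylinder over an orbit of $\brho_{x_0}$, so the building reduces to a concatenation of trivial cylinders glued by Morse flows of $f_{x_0}$ on $\brho_{x_0} \isom S^1$. The composite flow runs from the maximum $\theta_e$ at the top to the minimum $\theta_h$ at the bottom; since $f_{x_0}$ has only these two critical points, there are exactly two such unparameterized negative gradient trajectories (going either way around $S^1$). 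By Bourgeois's Morse-Bott gluing, each lifts uniquely to an element of $\cM_1(\alpha,\beta)$ for $(\lambda,J)$ sufficiently close to $(\pi_\bI^*a,\bar J)$, giving $\#(\cM_1/\bR) = 2 \equiv 0 \pmod 2$.

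For case (c2), where $\bar I(\bcR_{\alpha,\beta}) = 1$ and $m^l(\bcR_{\alpha,\beta}) = 3$, I would combine the index bound with the $\Theta$-constraint. Iterating Lemma \ref{lem:complexity} and using $\bar I(\bcR^i) \ge -1$ together with non-negativity of sharing multiplicities restricts the building to a short list of configurations (essentially one nontrivial level carrying the full region, possibly augmented by trivial local factors). In each configuration, the matching at the top and bottom forces the positive and negative ends of the nontrivial curve to concentrate at $\theta \in \{\theta_e,\theta_h\}$, so Lemma \ref{lem:thetaconstraint} reduces to a linear relation in $\theta_e,\theta_h$. Using $\theta_h \in (0,1/5)$ and $\theta_e = -\theta_h/N$ with $N$ larger than all relevant multiplicities, this relation forces balance of total elliptic and hyperbolic multiplicities between the top and bottom ends. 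An explicit check of the two sub-configurations (interior $m^l_\cR(x_0) = 1$; interior $v_\cR(x_0) \times \sigma_\cR(x_0) = 2$ with boundary $m^l_\cR(x_1) = 2$) then shows that the odd total loose multiplicity $m^l = 3$ creates a parity mismatch in these balance equations, producing the contradiction.

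The principal obstacle is case (c2): the combinatorial verification that no distribution of the three loose edges among the possible decorations simultaneously satisfies the index bound from Lemma \ref{lem:complexity} and the arithmetic constraint from Lemma \ref{lem:thetaconstraint}. Case (c3) is largely a direct application of Bourgeois's Morse-Bott gluing once the geometric confinement to $\bR \times \{x_0\} \times T^2$ is established via Lemma \ref{lem:apos}.
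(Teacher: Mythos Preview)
Your setup and the $\bar I = -1$ case (c3) match the paper's argument. The gap is in case (c2).

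Your claim that Lemma~\ref{lem:complexity} forces the building to have ``essentially one nontrivial level carrying the full region'' is incorrect, and this is where the real content of the proof lies. With $\bar I(\bcR_\ab)=1$, the complexity identity allows either a single non-local level with $\bar I=1$, or two non-local levels each with $\bar I=0$ and total sharing multiplicity~$1$. The single-level case is indeed ruled out by the $\Theta$-constraint, essentially as you describe (though the obstruction is not a ``parity mismatch'' but simply that $3\theta_h + (m(\cR_\ab)-3)\theta_h/N$ lies strictly in $(0,1)$ since $\theta_h\in(0,1/5)$ and $m(\cR_\ab)<N$). However, ruling out one level forces you into the two-level case, which you have not addressed.

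In the two-level case your key assertion---that all ends of the nontrivial curve concentrate at $\theta\in\{\theta_e,\theta_h\}$---fails: the intermediate matching between $\bar C^{i_1}$ and $\bar C^{i_2}$ occurs at some $\brho_{x_0}(\theta^\pm)$ where $\theta^\pm$ need not be a critical point of $f_{x_0}$. The paper's Steps~2--4 are devoted precisely to this: one must first argue (via intersection positivity with trivial cylinders) that the nontrivial component of each level corresponds exactly to the non-local factor of its region; then show the shared edge is at a single $x_0$ with multiplicity~$1$; then apply $\Theta=0$ separately to each level to pin down $\theta^-\in[2\theta_h,3\theta_h)$ and $\theta^+\in(-2\theta_h,-\theta_h]$ (in the convex case); and finally observe that no downward gradient trajectory of $f_{x_0}$ runs from $\theta^-$ to $\theta^+$. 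This last step is where the specific choice of $\theta_h<1/5$ and $\theta_e=-\theta_h/N$ in \S\ref{section:mb} is actually used, and it is the heart of the argument you are missing.
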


\begin{proof}
Consider a sequence $(\lambda_n,J_n)$ of good perturbations of $\pi_\bI^*a$ and generic $\lambda_n$-admissible $J_n$ converging to $(\pi_\bI^*a,\bar J)$. Suppose there exists a $J_n$-holomorphic curve $C_n \in \cM_1^{J_n}(\alpha,\beta)$ for each $n$. Each $C_n$ satisfies the partition condition and, by Proposition \ref{prop:indecomposable}, has genus zero. By \cite{mb}, a subsequence of $C_n$ converges to a $\bar J$-holomorphic building $\bar C = (\bar C^1, \cdots, \bar C^l)$ where $\bar C^i \in \cM^{\bar J}(\alpha^i,\beta^i)$ for orbit sets $\alpha^i$ and $\beta^i$ of $\pi_\bI^*a$ (see \S \ref{section:mb}). Let $\bcP^{i-1} := \bcP_{\alpha^i}$ and $\bcP^i:= \bcP_{\beta^i}$ for $1 \le i \le l$. By the definition of $\bar J$-holomorphic buildings, the two definitions of $\bcP^i$ agree for $1 \le i<l$ and $\cR_\ab$ decorates $(\bcP^0,\bcP^l)$. Write $\bar C^i = \bar C^i_0 \cup \bar T^i$ where $\bar T^i$ is the union of its (multiply covered) trivial cylinder components. By Lemma \ref{lem:apos}, each $\bcR^i=(\bcP^{i-1},\bcP^i)$ is $a$-positive and $\bar C^i= \bar T^i$ whenever $\bcR^i$ is local.

If $\bar I(\cR_\ab) =-1$, i.e. $\bar\cR_\ab$ is a local bigon at some $x_0$, then $\bar C$ is a trivial cylinder attached to Morse flows of $f_{x_0}$. By \cite{mb}, for sufficiently large $n$, $C_n$ is one of the two cylinders corresponding to the two flows from $\theta_{x_0}^{\max}$ to $\theta_{x_0}^{\min}$ and $\#(\cM_1^{J_n}(\alpha,\beta)/\bR) =0$. Henceforth assume $\bar I(\cR_\ab) = 1$.

\step{1.} Let $i_1$ and $i_2$ be the smallest and the largest $i$ such that $\bcR^i$ is non-local. If $i_1=i_2$, each positive (respectively negative) end of $\bar C^{i_1}$ converges to covers of some $\brho_x(\theta_x^{\min})$ (respectively $\brho_x(\theta_x^{\max})$). Since $m^l(\cR_\ab)=3$ (see \S \ref{section:classify} (c2)),
\[ \Theta(\bar C^{i_1}) = 3\theta_h + (m(\cR_\ab)-3)\theta_h/N \neq 0, \]
contradicting Lemma \ref{lem:thetaconstraint}. Thus, $i_1<i_2$. By Lemma \ref{lem:complexity} on $(\bcR')^1 :=(\bcP^0,\bcP^{i_1})$ and $(\bcR')^2 := (\bcP^{i_1},\bcP^l)$, $\sum m^l((\bcR')^j_k) + \sum \bar I((\bcR')^j_k) = 4$. Since each $(\bcR')^j$ is non-local and $a$-positive, each contains exactly one non-local factor with $m^l((\bcR')^j_k)=2$ and $\bar I((\bcR')^j_k) = 0$. By Lemma \ref{lem:complexity} on $(\bcP^{i_1},\bcP^{i_2-1})$ and $(\bcP^{i_2-1},\bcP^l)$, we conclude $\bcR^{i_1} =(\bcR')^1$, $\bcR^{i_2} = (\bcR')^2$, all other $\bcR^i$ are local and $m^s_{\bcR^{i_1},\bcR^{i_2}}=\delta_{x_0}$ for some $x_0$.

\step{2.} For $i_0=i_1$ or $i_2$, we claim $\bcR^{i_0}_0 = \bcR_{\alpha_0,\beta_0}$ where $\bcR^{i_0}_0$ denotes the non-local factor of $\bcR^{i_0}$ and $\bar C^{i_0}_0 \in \cM(\alpha_0,\beta_0)$. Write $\supp\bcR^{i_0}_0=[x_1,x_2]$ and note $\bar C^{i_0}_0 \cap (\bR \times \{x\} \times T^2) = \emptyset$ for $x < x_1$ by Lemma \ref{lem:apos} and for $x=x_1$ since $\bar C^{i_0}_0$ intersects any trivial cylinder at $x_1$ transversely. Hence, $\bar C^{i_0}_0 \cap (\{0\} \times [0,x_1+\epsilon] \times T^2)$ is empty for some $\epsilon >0$. Since a trivial cylinder at $x < x_1+\epsilon$ with $v_a(x) \times \sigma_{\bcR^{i_0}}(x) = 1$ intersects each component of $\bar C^{i_0}_0 \cap (\bR \times [0,x_1+\epsilon] \times T^2)$ at least once by \eqref{eq:pos}, $\bar C^{i_0}_0$ cannot have both a positive and a negative end at $x_1$, and similarly at $x_2$. The claim follows since $m^l(\bcR^{i_0}_0)=2$ and $C^{i_0}$ contains no trivial cylinders on $(x_1,x_2)$.

\step{3.} By Lemma \ref{lem:thetaconstraint}, $\bar C^{i_1}_0$ has a negative end at some $\brho_{x_1}^{m_1}(\theta^-)$ with $\theta^- \neq \theta_{x_1}^{\max}$. Since $C^{i_2}_0$ must have a positive end at $x_1$, $x_1 \not \in \partial(\supp\cR_\ab)$ by positivity of $\bcR^{i_1}$ and $\bcR^{i_2}$ at $x_1$. By Lemma \ref{lem:pos}(b), $\sigma_{\cR_\ab}(x_1) = \sigma_{\bcR^{i_1}}(x_1) + \sigma_{\bcR^{i_2}}(x_1)$ is not a multiple of $v_a(x_1)$, so $x_1 \in int(\supp\bcR^{i_0})$ for $i_0=i_1$ or $i_2$. In either case, $m_{\bcP^{i_1}}(x_1) > m^{triv}_{\bcR^{i_0'}}(x_1) = m_{\bcP^{i_1}}(x_1) - m^s_{\bcR^{i_1},\bcR^{i_2}}(x_1)$ for $i_0'=i_1+i_2-i_0$. Hence, $x_1 = x_0$ and the total multiplicity of trivial cylinders of $\bar C^{i_0'}$ at $x_0$ is $m_{\bcP^{i_1}}(x_0)-1$ by Step 2. We conclude $m_1=1$ and any other negative end of $\bar C^{i_1}$ is at some $\brho_x^m(\theta_x^{\max})$. Similarly, $\bar C^{i_2}$ has a single positive end at $\brho_{x_0}(\theta^+)$ with $\theta^+ \neq \theta_{x_0}^{\min}$ and any other positive end at some $\brho_x^m(\theta_x^{\min})$.

\step{4.} Suppose $\brho_{x_0}$ is convex. By Step 2, 3 and $m^l(\bcR^{i_1}_0) =2$,
\[ \Theta(\bar C^{i_1}_0) = 2\theta_h  - \theta^- + (m(\bcR^{i_1}_0)-3)\theta_h/N =0. \]
Hence, $2\theta_h \le \theta^- < 3\theta_h$. Similarly, $-2\theta_h < \theta^+ \le -\theta_h$ by
\[ \Theta(\bar C^{i_2}_0) = \theta_h + \theta^+ + (m(\bcR^{i_2}_0)-2)\theta_h/N = 0. \]
However, there is no flow of $f_{x_0}$ from $\theta^-$ to $\theta^+$. If $\brho_{x_0}$ is concave, we have $\theta_h < \theta^- <2\theta_h$ and $-3\theta_h < \theta^+ \le -2\theta_h$ and arrive at a similar contradiction.
\end{proof}

\subsection{Invariance of the moduli count}

We establish invariance of moduli count under certain deformations of $(\lambda,J)$.

\begin{definition}
Let $\bcR$ be a non-local indecomposable region with $\supp(\bcR) = [x_1,x_2]$. We say that an orbital moment map $a$ is \defn{$\bcR$-adapted} if $\bcR$ is $a$-positive, $a(x) \times a'(x_1) > 0$ for $x < x_1$ and $a(x) \times a'(x_2) > 0$ for $x > x_2$.
\end{definition}

\begin{lem}\label{lem:lambdaconnected}
For a non-local indecomposable region $\bcR$, the space of $\bcR$-adapted orbital moment maps is path-connected.
\end{lem}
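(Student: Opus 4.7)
The plan is to construct an explicit path between any two $\bcR$-adapted orbital moment maps $a_0, a_1$ in two stages. Write $\{x_1, x_2\} \cup \supp(\bcR) = \{y_0 < y_1 < \cdots < y_{k+1}\}$; at each $y_i$ both $a_0'(y_i)$ and $a_1'(y_i)$ are positive multiples of a common primitive vector $v_i$ (determined by $v_\bcR(y_i)$), so only the values $a(y_i)$ and magnitudes $|a'(y_i)|$ differ between the two maps at these special points.

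First I would interpolate the finite-dimensional ``point data.'' The contact condition forces each $a(y_i)$ into the open half-plane $H_i := \{u : u \times v_i > 0\}$, which is convex and contains both $a_0(y_i)$ and $a_1(y_i)$; the boundary conditions $a(x) \times a'(x_j) > 0$ for $x$ outside $[x_1, x_2]$ similarly give convex constraints on $a(0), a(1)$ (since $a'(x_j)$ has prescribed direction). Hence I can linearly interpolate this data, use a cutoff-function construction to realize the interpolation as a smooth family of maps, and reduce to the case where $a_0$ and $a_1$ have matching 1-jets on small neighborhoods of $\{0, 1\} \cup \supp(\bcR)$. The sign of $(a' \times a'')(y_i)$ (needed for the convexity condition $c_\bcR(y_i)$) is an open condition preserved automatically for small enough deformations.

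Second, I would interpolate the shape of $a$ on each open interval $(y_i, y_{i+1})$ and on the outer intervals $[0, x_1]$, $[x_2, 1]$. The positivity condition $(a')^\vee \times \sigma_\bcR \ge 0$ (with $\sigma_\bcR$ constant on each such interval) is linear in $a'$ and is preserved by convex combinations, as are the boundary conditions. To address the nonlinear contact condition $a \times a' > 0$, I would pass to polar coordinates $a = re^{i\theta}$: then $a \times a' = r^2\theta'$, so the contact condition linearizes to $\theta' > 0$. Lifting $\theta$ to $\bR$ (possible since it is strictly increasing on $\bI$), the space of $(r,\theta)$ with prescribed endpoint 1-jets, $r > 0$, and $\theta' > 0$ is a convex open subset of a Fr\'echet space, so convex interpolation provides the path.

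The main obstacle is the nonlinearity of the contact condition, which is why the polar-coordinate detour is needed. A secondary subtlety is that the positivity condition transports into polar coordinates as a constraint on $\theta + \arg(r' + i r \theta')$, whose convexity in $(r, \theta)$ is not automatic. I would resolve this by either restricting the interpolation to a $C^1$-small neighborhood (where openness of the condition suffices to preserve it under convex combinations) or, if needed, by subdividing each $(y_i,y_{i+1})$ into smaller sub-intervals on which the arc of admissible directions is narrow enough that the constraint becomes effectively linear. In either case the local path-connectedness argument glues across the special points thanks to the matching 1-jets arranged in Stage~1.
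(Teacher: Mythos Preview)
Your Stage~2 has a real gap that neither proposed fix resolves. In polar coordinates the contact condition becomes $\theta'>0$, but the $a$-positivity constraint $(a')^\vee\times\sigma_{\bcR}\ge 0$ translates to a requirement that $\theta+\arg(r'+ir\theta')$ lie in a fixed half-circle, and the corresponding region in $(r,\theta)$-space is not convex. Your fix~(a) yields only local path-connectedness, and there is no compactness or contractibility argument available to globalize it. Your fix~(b) does not help either: after subdividing $(y_i,y_{i+1})$ the functions $\theta_0,\theta_1$ need not agree at the new subdivision points (they were only matched at the original $y_i$ in Stage~1), so on each piece you still face the same non-convex constraint with $\theta$ itself varying under the interpolation; narrowness of the target arc does not linearize a constraint whose coefficients depend on the very quantity you are interpolating.

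The paper avoids this by staying in Cartesian coordinates, where $a$-positivity is linear in $a'$ and hence automatically preserved under the straight-line homotopy $a_r=(1-r)a_0+ra_1$. The nonlinear contact condition $a\times a'>0$ is then handled by first \emph{rescaling} each $a_j$ to bring it $C^1$-close to an auxiliary curve $b_j$, and then \emph{reparametrizing} so that $b_0'(x)\sim b_1'(x)$ for every $x$; once the tangent directions are pointwise parallel, $b_r\times b_r'>0$ follows by convexity of the common half-plane, and $a_r\times a_r'>0$ follows by $C^1$-closeness. In effect the paper matches $\arg(a')$ at \emph{every} point via reparametrization, which is precisely the strengthening of your Stage~1 that would make a convex interpolation respect both conditions simultaneously.
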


\begin{proof}
Observe that a \defn{rescaling} $\kappa a$ of an orbital moment map $a$ by $\kappa: \bI \to \bR^+$ and a \defn{reparametrization} $\psi^*a$ by $\psi \in \aut(\bI)$ are connected to $a$ via linear interpolations of $\kappa$ with $\kappa' \equiv 1$ and $\psi$ with $\id$, respectively. 

Let $a$ be any $\bcR$-adapted orbital moment map and let $P=\{x_1 < \cdots < x_n\}$ be $\supp(m_\bcR)$ if $m(\bcR)>2$; and $\supp(m_\bcR) \cup \{x_2\}$ for $x_2 \in int(\supp\bcR)$ otherwise. In either case, we can find $u_i$ near $-\sigma_\bcR(x_{i+1})^\vee$ so that $u_i \times a' > 0$ on $[x_i,x_{i+1}]$ for each $1 \le i < n$. We claim there exist $\{z_i\}_{i=1}^{n-1}$ and a continuous rescaling $\bar b$ of $a$ such that $z_i \in (x_i,x_{i+1})$, $\bar b$ is smooth except (possibly) at $z_i$ and $\bar b'(x_i) \sim a'(x_i)$ for each $i$: if $u_i \times a(x_i) \ge 0$, pick $z_i$ close to $x_i$; if $u_i \times a(x_{i+1}) \le 0$, pick $z_i$ close to $x_{i+1}$; if neither, pick $z_i$ so that $a(z_i) \sim u_i$. The claim follows from $a \times a' > 0$ and $u_i \times a' > 0$. Obtain a smooth rescaling $b$ of $\bar b$ by modifying $\bar b$ on small neighbourhoods of $z_i$ so that $b \times b'$ does not change signs on $[x_i,x_{i+1}]$.

For $\bcR$-adapted $a_0$ and $a_1$, find $b_0$ and $b_1$ using the above procedure. By rescaling, assume $a_0$ and $a_1$ are $C^1$-close to $b_0$ and $b_1$ and by reparametrizing, $b_0'(x) \sim b_1'(x)$ for all $x$. Then, $a_r:=(1-r) a_0 + r a_1$ is an orbital moment map since $b_r \times b_r' > 0$ for $b_r:=(1-r)b_0+rb_1$, and it is $\bcR$-adapted since $a_0$ and $a_1$ are.
\end{proof}

The following is an adaptation of \cite[Lemma 3.15]{pfh}:

\begin{prop}\label{prop:deform}
Let $\cR$ be an indecomposable region with $I(\cR)=1$ and $\bar I(\cR) \le 0$. For each $r \in \{0,1\}$, let $a_r$ be an $\cR$-adapted orbital moment map, $\lambda_r$ a good perturbation of $\pi_\bI^*a_r$ and $J_r$ a generic $\lambda_r$-admissible almost complex structure. Then
\[ \#(\cM^{J_0}_1(\alpha_0,\beta_0)/\bR) = \#(\cM^{J_1}_1(\alpha_1,\beta_1)/\bR) \]
where $\alpha_r$ and $\beta_r$ are orbit sets of $\lambda_r$ with $\cR_{\alpha_r,\beta_r} = \cR$.
\end{prop}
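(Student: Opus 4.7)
The plan is a parametric moduli space argument modeled on \cite[Lemma 3.15]{pfh}. First, apply Lemma \ref{lem:lambdaconnected} to join $a_0$ and $a_1$ by a smooth path $\{a_r\}_{r \in [0,1]}$ of $\cR$-adapted orbital moment maps. Because $\cR$-adaptedness means $v_{a_r}(x) \sim v_\cR(x)$ with the prescribed convexity at each $x \in \supp(\cR)$, the $S^1$-families $\bar\rho_x^r$ persist along the path. Next choose a smooth family of good perturbations $\lambda_r$ of $\pi_\bI^* a_r$ and a smooth generic family $\{J_r\}$ of $\lambda_r$-admissible almost complex structures interpolating $(\lambda_0,J_0)$ and $(\lambda_1,J_1)$. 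For every $r$, orbit sets $\alpha_r,\beta_r$ with $\cR_{\alpha_r,\beta_r}=\cR$ exist, and by Proposition \ref{prop:index} every $C \in \cM^{J_r}(\alpha_r,\beta_r)$ has $I(C)=I(\cR)=1$.

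By standard cobordism transversality, for a generic such path the parametric moduli space
$$ \cM^{param} := \{(r,[C]) : [C] \in \cM_1^{J_r}(\alpha_r,\beta_r)/\bR\} $$
is a smooth $1$-manifold whose boundary at $r \in \{0,1\}$ is precisely the two counts we wish to equate. If $\cM^{param}$ is additionally compact, its $\bZ/2$-boundary vanishes and the two counts agree modulo $2$.

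Compactness reduces, via SFT compactness, to classifying sequential limits of $(r_n,[C_n]) \in \cM^{param}$ as $J_{r_\infty}$-holomorphic buildings $(\bar C^1,\dots,\bar C^l)$ of total ECH index $1$. For generic $\{J_r\}$ every somewhere injective component has ECH index $\ge 0$; combined with indices summing to $1$, at most one level contains an index-$1$ somewhere injective component, and the remaining levels consist of trivial cylinders (the only ECH-index-$0$ somewhere injective curves in a symplectization). Such limits are fictitious: the limit is a single honest $J_{r_\infty}$-curve in $\cM_1^{J_{r_\infty}}(\alpha_{r_\infty},\beta_{r_\infty})/\bR$, not an interior end of $\cM^{param}$. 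A genuine interior end would force a breaking into two distinct somewhere injective curves of ECH indices in $\{0,1\}$, which on the combinatorial side (Lemma \ref{lem:apos}) splits $\cR$ as a sum $\cR = \cR' + \cR''$ of $a_{r_\infty}$-positive regions with $I(\cR')+I(\cR'')=1$.

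The main obstacle is to rule out such sums. The hypothesis $\bar I(\cR) \le 0$ together with $I(\cR)=1$ places $\cR$ in case (c1) or (c3) of \S\ref{section:classify}. In case (c3), $\cR$ is a local bigon and manifestly admits no splitting into two non-empty non-local pieces. In case (c1), the minimally positive, almost minimally decorated form of $\cR$ (forced by $\bar I(\cR)=0$ together with Lemma \ref{lem:pos}(a)) is rigid enough that any candidate splitting must have one summand supported on trivial cylinders; indeed, indecomposability of $\cR$ together with $Q_{\cR'}, Q_{\cR''} \ge 0$ and additivity of local contributions pins the index-$0$ summand to a concatenation of trivial regions. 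Granting this combinatorial analysis, all SFT limits are fictitious, $\cM^{param}$ is a compact $1$-manifold, and we conclude $\#(\cM_1^{J_0}(\alpha_0,\beta_0)/\bR) \equiv \#(\cM_1^{J_1}(\alpha_1,\beta_1)/\bR) \pmod{2}$.
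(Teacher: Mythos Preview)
Your overall plan—connect $a_0$ to $a_1$ via Lemma~\ref{lem:lambdaconnected}, choose a generic family $(\lambda_r,J_r)$, and study the parametric moduli space—matches the paper's. The gap is in your treatment of breaking. You assert that in any broken limit the index-$0$ summand must be a concatenation of trivial regions, so that all SFT limits are ``fictitious'' and $\cM^{param}$ is compact. This is false: genuine bifurcations \emph{do} occur. With the paper's choice of $\lambda_r$, every intermediate orbit set $\gamma$ is supported on $\supp(m_\cR)$, and one may have $\bcP_\gamma=\bcP_\alpha$ with a single $e/h$ label swapped at some $x$. Then $(\cP_\alpha,\cP_\gamma)$ is a local index-$1$ bigon at $x$, while $(\cP_\gamma,\cP_\beta)$ has the same underlying undecorated region as $\cR$ but with decoration one step further from minimal, hence ECH index $0$. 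At isolated parameter values this index-$0$ region can be realized by a non-trivial somewhere injective curve of Fredholm index $-1$; nothing in your combinatorial sketch (indecomposability, $Q\ge 0$, additivity) excludes this, because the ``splitting'' is not a concatenation of $\cR$ at all—the two pieces overlap on all of $\supp\cR$.

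The paper does not rule out such breakings; it shows they cancel in pairs. The local index-$1$ component $C'\subset\bR\times U_x^r\times T^2$ is exactly the (c3) situation, whose mod-$2$ count is zero (Proposition~\ref{prop:diff-mb} and the local case of the present proposition, proved first). Standard gluing together with automatic transversality then pairs up the broken configurations, so $\#(\cM_1^{J_r}(\alpha_r,\beta_r)/\bR)$ is unchanged mod $2$ across each bifurcation. You are also missing a second, separate compactness issue the paper addresses: one must construct the path $\lambda_r$ so that on the unperturbed part of $Y$ there is no orbit $\rho$ with $[\rho]=\pm\sigma_\cR(x)$; otherwise a sequence $C_n$ could develop an end there and Gromov compactness fails in the form you need. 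The paper's topological argument (connectedness of $\pi|_C^{-1}(U_x)$ and of the complementary cylinders) is what secures this.
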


\begin{proof}
Suppose we have paths $\{a_r\}_{r \in [0,1]}$ of $\cR$-adapted orbital moment maps with $\cA^r:=\sum_x m_\cR(x)\cA_{a_r}(x)$; and $\{\lambda_r\}$ of perturbations of $\pi_\bI^*a_r$ so that, for each $r \in [0,1]$ and $x \in \supp(m_\cR)$, $\lambda_r|_{U_x^r \times T^2}$ is of the form \eqref{eq:mblambda} for a neighbourhood $U_x^r$ of $x$ with $\cA_{a_r}|_{U_x^r \setminus \{x\}}> \cA^r$. Let $\alpha_r$ and $\beta_r$ denote the orbit sets of $\lambda_r$ with $\cR_{\alpha_r,\beta_r}=\cR$ and choose a generic path $\{J_r\}$ of $\lambda_r$-admissible almost complex structures. Assuming compactness of $\cM_1^{J_r}(\alpha_r,\beta_r)/\bR$ as in \cite{bn}, its mod 2 count can change only when there is a broken $J_r$-holomorphic curve $C=(C^1,\cdots, C^l)$ from $\alpha_r$ to $\beta_r$.

Due to action, a $d$-fold cover $dC'$ of a somewhere injective $J_r$-holomorphic curve $C' \subset \bR \times U_x^r \times T^2$ has $I(dC') = dI(C') \ge 0$ with equality only if $C'$ is trivial. This and Lemma \ref{lem:pos} implies each $I(C^i) \ge 0$ and some $C^i$ must contain a somewhere injective component $C' \subset \bR \times U_x^r \times T^2$ with $I(C') = 1$. If $\cR$ is local, $C=C'$ and we are done. Otherwise, by Proposition \ref{prop:diff-mb} and the local case of this Proposition, such $C'$ exists in pairs and by standard gluing results as in \cite{lee}, $C$ also exists in pairs (cf. automatic transversality \cite{wendl}). It remains to provide such paths and justify compactness. 

Write $\lambda_r = (1+ \eta_r\sum \tilde f^r_{x^r})\pi_\bI^*a_r$ for $r=0,1$. If $a_0=a_1$, linearly interpolating $\sum\tilde f^r_{x^r}$ and choosing sufficiently small $\eta_r$ gives a path of good perturbations of $\pi_\bI^*a_0$ and Gromov compactness holds. In general, let $\{a_r\}$ be given by a linear interpolation if $\cR$ is local, and by Lemma \ref{lem:lambdaconnected} otherwise. For each $x \in P:=\supp(m_\cR)$, the first step allows us to assume $\tilde f^0_x=\tilde f^1_x$ and $U_x^r:=U_x^0$ is small enough so that $\lambda_r := (1+ \sum_{x \in P} \eta_r \tilde f^0_x) \pi_\bI^*a_r$ gives a desired path for small enough $\eta_r$.

Finally, we justify Gromov compactness when $\cR$ is not local. Let $C \in \cM_1^{J_r}(\alpha_r,\beta_r)$ and $x_0 \in P$. We claim $\pi|_C^{-1}(U_{x_0}^0)$ is connected where $\pi:\bR \times Y \to \bI$ is the obvious projection. For each of its component $C'$, $v_\cR(x_0) \times [\pi|_{C'}^{-1}(x)] \ge 0$ for generic $x \in U_{x_0}^0$ by \eqref{eq:pos}. If this is equality for some $C'$, it has a positive end at a convex orbit or a negative end at a concave orbit (cf. proof of Lemma \ref{lem:pos}(b)) so $x_0 \in \partial(\supp\cR)$ and the claim follows from $m_\cR(x_0)=1$. Otherwise, it follows from $v_\cR(x_0) \times \sigma_\cR(x) \le 1$. In turn, $\pi|_C^{-1}(V)$ for any component $V$ of $\supp(\cR) \setminus \cup_{x \in P}U_x^0$ is a single finite cylinder since $C$ does not have genus by \eqref{eq:fredind} and any component of $\pi|_C^{-1}(\partial V)$ is homologically non-trivial by \eqref{eq:pos}. Since $\lambda_r$ does not have an orbit $\rho$ in $V$ with $[\rho] = \pm \sigma_\cR(x)$ for $x \in V$, a sequence $C_n \in \cM_1^{J_{r_n}}(\alpha_{r_n},\beta_{r_n})$ cannot develop an end in $V$. (When there is a birth/death of such $\rho$, this assertion fails and a bifurcation does occur.)
\end{proof}

\subsection{Base cases for induction}\label{section:base}

Let $\lambda$ be a good perturbation of $\pi_\bI^*a$ for a generic orbital moment map $a$ and $J$ a generic $\lambda$-admissible almost complex structure.
\begin{prop}\label{prop:base}
Let $\alpha$ and $\beta$ be orbit sets of $\lambda$. Suppose $\cR_\ab$ is $a$-positive, $I(\cR_\ab)=1$ and one of the following holds:
\begin{enumerate}[(i)]
\item $m_{\cP_\alpha}=\delta_{x_+}$ and $m_{\cP_\beta} = \delta_{x_-}$ for $\delta_+ \neq \delta_-$;
\item $c_\cR \ge 0$, $m_{\cP_\alpha}=\delta_{x_1} + \delta_{x_2}$, $m_{\cP_\beta}=m\delta_{x_0}$ for $x_0 \in (x_1,x_2)$, and $m^h_{\cP_\beta} (x_0)= 0$ if $m > 1$; or
\item $c_\cR \le 0$, $m_{\cP_\beta}=\delta_{x_1} + \delta_{x_2}$, $m_{\cP_\alpha}=m\delta_{x_0}$ for $x_0 \in (x_1,x_2)$, and $m^h_{\cP_\alpha} (x_0)= 0$ if $m > 1$.
\end{enumerate}
Then $\#(\cM_1(\alpha,\beta)/\bR) =1$.
\end{prop}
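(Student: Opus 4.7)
The plan is to reduce each of the three cases to a canonical configuration using the deformation invariance of Proposition \ref{prop:deform}, then to produce and count the unique $J$-holomorphic curve via the Morse-Bott framework of \S\ref{section:mb}. First, in each case I verify that the region $\cR = \cR_{\alpha,\beta}$ is non-local, indecomposable, $a$-positive, and satisfies $\bar I(\cR) = 0$ (an elementary check from the hypotheses and the index formula in Definition \ref{def:combind}), so that $\cR$ falls into class (c1) of \S\ref{section:classify}. This allows me to invoke Proposition \ref{prop:deform} and replace $a$ by any more convenient $\cR$-adapted orbital moment map, and $\lambda$ by a good perturbation thereof, without changing $\#(\cM_1(\alpha,\beta)/\bR)$.

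For case (i), the region is a degenerate bigon with $Q_\cR \equiv 0$ and two parallel edges of common primitive class $v \in \Lambda$ at $x_+$ and $x_-$. After an $SL_2(\bZ)$-change of coordinates I may assume $v = (1,0)$, and I deform $a$ through $\cR$-adapted moment maps so that $a'(x)^\vee$ is a positive multiple of $\partial_{t_1}$ throughout a neighborhood of $[\min(x_\pm),\max(x_\pm)]$; this is $\cR$-adapted because $\supp(m_\cR)$ consists only of the two endpoints. In the Morse-Bott limit, the $\partial_{t_2}$-invariance of the data forces any non-trivial $\bar J$-holomorphic cylinder from $\brho_{x_+}$ to $\brho_{x_-}$ to be $\partial_{t_2}$-invariant and hence reduces to a planar holomorphic curve $(x(s),t_2(s))$ satisfying the Cauchy--Riemann equations with prescribed endpoints $x_+$ and $x_-$; uniqueness up to $\bR$-translation is standard. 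Bourgeois's Morse-Bott gluing \cite{mb}, combined with the specific pairing $(\theta^{\max},\theta^{\min})$ for each orbit type described in \S\ref{section:mb}, produces exactly one Morse flow match on each end compatible with the Conley-Zehnder data forced by $I(\cR)=1$.

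For case (ii) (case (iii) being dual under reflecting $a$ as in the Remark after Theorem \ref{thm:main}), the region is the ``rounded corner'' configuration of \cite{t3}: $\cP_\alpha$ has edges $v_1, v_2$ at $x_1 < x_2$, while $\cP_\beta$ has a single edge of multiplicity $m$ at $x_0 \in (x_1,x_2)$ satisfying $v_1 + v_2 = m \cdot v_{\cP_\beta}(x_0)$. Using $c_\cR \ge 0$ and Lemma \ref{lem:lambdaconnected}, I deform $a$ through $\cR$-adapted moment maps to one that is convex everywhere on $[x_1,x_2]$, matching the setup of $(T^3,\lambda_n)$. The count then reduces to counting $J$-holomorphic pair-of-pants with two positive ends at the hyperbolic/elliptic orbits in $\alpha$ and one negative end at the $m$-fold cover of the orbit at $x_0$; the hypothesis $m_{\cP_\beta}^h(x_0) = 0$ when $m > 1$ guarantees this cover is elliptic, so the partition condition of \S\ref{section:ech} forces a single negative end of multiplicity $m$, matching the pants topology. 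The explicit pair-of-pants computation in \cite[\S 10]{t3} then yields the count of $1$.

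The main obstacle is justifying the reduction to the convex-everywhere (i.e.\ $(T^3,\lambda_n)$-like) setting for cases (ii) and (iii): one must confirm that the $\cR$-adapted path from Lemma \ref{lem:lambdaconnected} can be chosen to avoid any index-$0$ bifurcations outside the range covered by Proposition \ref{prop:deform}, and that the gluing / uniqueness argument imported from \cite{t3} genuinely survives the perturbation. A secondary obstacle in case (i) is ruling out stray non-trivial components in the Morse-Bott degeneration limit; this is handled using $Q_\cR \equiv 0$ together with \eqref{eq:trivcyl} to exclude hidden trivial cylinder contributions, and standard automatic transversality for low-genus curves to promote existence to a transverse count.
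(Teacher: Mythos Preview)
Your overall architecture matches the paper's: invoke Proposition~\ref{prop:deform} to normalize $a$, then appeal to Morse--Bott and known curve counts. For cases (ii) and (iii) your plan is essentially the paper's (the paper cites \cite{taubes2} as the original source but acknowledges \cite{pfh,t3} cover it; the reflection-duality for (iii) is exactly what the paper does, though it spells out that the reflected $J^\dagger$ is not literally $\lambda^\dagger$-admissible and must itself be deformed). One minor omission: in (ii) you only discuss the pair-of-pants ($m>0$), but $m=0$ gives a two-positive-end cylinder and needs a separate (still classical) count.

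Case (i) has a real gap. If you deform so that $a'(x)^\vee$ is a fixed direction on the whole interval $[\min x_\pm,\max x_\pm]$, then $a'\times a'' \equiv 0$ there, so $a$ is \emph{not} $\cR$-adapted: $a$-compatibility requires $c_{\bcP}(x_\pm)\sim (a'\times a'')(x_\pm)$, which forces $c_{\bcP}(x_\pm)=0$, contradicting $x_\pm\in\supp\bcP$. Hence Proposition~\ref{prop:deform} does not apply to your target. Even ignoring this, a linear $a$ produces Reeb orbits at \emph{every} $x$, a two-parameter Morse--Bott family you never address; and the assertion that $\partial_{t_2}$-invariance of the data forces each $\bar J$-cylinder to be $\partial_{t_2}$-invariant is false (invariance of $(\lambda,\bar J)$ gives an $S^1$-action on the moduli space, not pointwise invariance of curves).

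The paper's fix is to deform not to a linear $a$ but to $(1+\eta' g)\,a_{\lambda_{std}}$ with $g(x)=\pm x(x-\tfrac12)(x-1)$: this \emph{is} $\cR$-adapted for small $\eta'>0$ (the critical points of $g$ sit at $x_\pm$ with the correct convexity), and in the limit one sees the round $S^3$ with its $S^2$-family of embedded orbits. The perturbing function $(1+\eta f)(1+\eta' g)$ is then an honest Morse function on this $S^2$, and the unique index-one flow line from $\brho_{x_+}(\theta_e)$ to $\brho_{x_-}(\theta_h)$ (resp.\ $\brho_{x_+}(\theta_h)$ to $\brho_{x_-}(\theta_e)$) is what \cite{mb} glues to the unique cylinder. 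Your sketch should be replaced by this two-stage Morse--Bott reduction.
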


\begin{proof}
Since $\bar I(\cR_\ab) =0$, we may assume $a$ is any $\cR_\ab$-adapted orbital moment map and $\lambda$ any good perturbation by Proposition \ref{prop:deform}. With this assumption, cases (ii) and (iii) are covered in \cite{pfh,t3} but we refer to their original source in \cite{taubes2}.

\case{(i)} Assume $x_\pm$ are critical points of $g(x) := \pm x(x-1/2)(x-1)$ with a plus sign if $x_+< x_-$ and minus otherwise. By $SL_2(\bZ)$-symmetry, assume $v(x_+) = (1,1)$ and $\lambda = (1+\eta f)(1+\eta' g)\pi_\bI^* a_{\lambda_{std}}$ for a small $\eta' >0$ where $a_{\lambda_{std}}(x) = (1-x,x)$ is the orbital moment map of $(S^3,\lambda_{std})$ and $f := \sum \tilde f_x$ in \eqref{eq:mblambda}. Finite dimensional Morse-Bott theory on $S^2$-family of embedded orbits of $\lambda_{std}$ gives a unique flow of $(1+\eta f)(1+\eta' g)$ from $\brho_{x_+}(\theta_e)$ to $\brho_{x_-}(\theta_h)$. By a Morse-Bott argument \cite{mb}, the $J$-holomorphic cylinder from $\check e_{x_+}$ to $\hat h_{x_-}$ corresponds to this flow for small $\eta$ and $\eta'$. We similarly get the unique member of $\cM_1(\check h_{x_+},\hat e_{x_-})/\bR$.

\case{(ii)} There is an identification of $\bR \times (\bI \times S^1 \times S^1)$ with a subset of $(\bR \times S^2 \times S^1,\alpha, J)$ considered in \cite{taubes2} so that the pullback of $\alpha$ is $\pi_\bI^*a$ for an everywhere convex $\cR_\ab$-adapted $a$ and the pullback of $J$ is $\pi_\bI^*a$-admissible. The unique member of $\cM_1(\alpha,\beta)/\bR$ for $m=0$ corresponds to an $(\bR \times S^1)$-family of $J$-holomorphic cylinders in \cite[Theorem A.1(c)]{taubes2} via a Morse-Bott argument \cite{mb}. If $m>0$, $C \in \cM_1(\alpha,\beta)$ has one negative puncture by partition condition and we similarly get the unique member of $\cM_1(\alpha,\beta)/\bR$ from an $(\bR \times S^1 \times S^1)$-family of three-punctured $J$-holomorphic spheres in \cite[Theorem A.2]{taubes2}.

\case{(iii)} We reduce to case (ii) with $m>0$. Assume $a$ is $\cR_\ab$-adapted and everywhere concave and $rdt_1 \in \im(a)$ for some $r \in \bR$. Let $\psi(t_1,t_2) = (t_1,-t_2)$ and $\Psi = (-\id_\bR) \times \id_\bI \times \psi$ be diffeomorphisms of $T^2$ and $\bR \times \bI \times T^2$. Then $\lambda^\dagger=2rdt_1 - (\id_\bI \times \psi)^*\lambda$ is a good perturbation of an everywhere convex orbital moment map and, for each orbit set $\gamma$ of $\lambda$, $\gamma^\dagger = (\id_\bI\times\psi)_*^{-1}\gamma$ is an orbit set of $\lambda^\dagger$. We have $\cM_1^J(\alpha,\beta)=\cM_1^{J^\dagger}(\beta^\dagger,\alpha^\dagger)$ where $J$ is a $\lambda$-admissible almost complex structure and $J^\dagger := \Psi_*^{-1} J\Psi_*$. Furthermore, if $J$ is close to $\pi_\bI^*a$-admissible $\bar J$ with $\bar J(\partial_x) = c\bar Q$, then $J^\dagger$ maps $\partial_s$ to a positive multiple of the Reeb vector field of $\lambda^\dagger$ and is $d(e^s\lambda^\dagger)$-tame provided $c>0$ is small. We can deform $J^\dagger$ to a $\lambda^\dagger$-admissible almost complex structure without changing the moduli count, similarly to Proposition \ref{prop:deform}.
\end{proof}

\subsection{Induction step}

\begin{prop}\label{prop:suff}
Let $(Y,\lambda,J)$ be as in \S \ref{section:base} and let $\alpha$ and $\beta$ be orbit sets of $\lambda$. Suppose $\cR_\ab$ is indecomposable and $a$-positive with $I(\cR_\ab) = 1$ and $\bar I(\cR_\ab) = 0$. Then $\#(\cM_1(\alpha,\beta)/\bR) = 1$.
\end{prop}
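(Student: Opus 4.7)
I will argue by induction, with the base of the induction supplied by Proposition \ref{prop:base}. A natural complexity measure is $m(\cR_{\alpha,\beta})$, or alternatively the pair $(m(\cR_{\alpha,\beta}), \#\supp(\cR_{\alpha,\beta}))$ in lexicographic order. The two levers are (i) Proposition \ref{prop:deform}, which permits free deformation of the $\cR$-adapted orbital moment map $a$ (and the associated $\lambda, J$) without changing the moduli count, and (ii) an obstruction bundle gluing argument in the style of Hutchings–Taubes. The setup of case (c1) from \S\ref{section:classify} gives us strong structural information: $\cR_{\alpha,\beta}$ is minimally positive with each nonzero $\sigma_{\cR}(x)$ primitive, $v_{\cR}(x)\times\sigma_{\cR}(x)\in\{0,1\}$, and the decoration is minimal except at a single edge where the elliptic/hyperbolic label is swapped.

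First I would identify the location of this decoration defect, call it $x_d$, and an extremal vertex $x_e \in \partial(\supp\cR)$ at which a "simple corner" of the region lives. Using Proposition \ref{prop:deform} I then deform $a$ (through $\cR$-adapted orbital moment maps) to bring $x_e$ and the adjacent interior vertex $x_1$ arbitrarily close together, effectively isolating a triangular subregion $\cR_{\mathrm{cor}}\subset\cR_{\alpha,\beta}$ that matches one of the shapes in cases (i)–(iii) of Proposition \ref{prop:base}. The complementary piece $\cR'$, obtained by replacing the two edges of $\cP^{i}$ at the corner by the single edge $\sigma_{\cR}$ across them, is again indecomposable, $a$-positive, minimally positive, and almost minimally decorated with $I(\cR')=1$, $\bar I(\cR')=0$, and $m(\cR')<m(\cR_{\alpha,\beta})$.

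For the inductive step proper, I would argue that as the deformation pinches $x_e$ and $x_1$ together, any family $C_n\in\cM_1^{J_n}(\alpha,\beta)/\bR$ Gromov-converges to a broken configuration with one level in the "corner" region and one level in the complementary region (compactness being controlled exactly as in the last paragraph of the proof of Proposition \ref{prop:deform}). Obstruction bundle gluing then gives a bijection between $\cM_1(\alpha,\beta)/\bR$ and the fibered product $\cM_1^{\mathrm{cor}}/\bR \times_{\mathrm{ends}} \cM_1(\alpha',\beta')/\bR$. By Proposition \ref{prop:base} the first factor has exactly one element, and by the induction hypothesis so does the second; hence $\#(\cM_1(\alpha,\beta)/\bR)=1$. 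A small case analysis is needed to ensure that the decoration defect $x_d$ can be placed on whichever side (corner or complement) is convenient, since the base cases of Proposition \ref{prop:base} include both elliptic and hyperbolic flavors.

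The main obstacle is the gluing step. One has to verify that the "near-decomposable" curves really factor through the predicted broken configurations (Gromov limits must not develop additional bubbling, multiple covers, or ends in the interior of $\supp\cR$), that obstruction bundle gluing produces exactly one glued curve for each pair, and that automatic transversality (invoked via Wendl, as referenced in Proposition \ref{prop:deform}) applies to all components involved. A secondary point of care is the matching of partition conditions along $\sigma_{\cR}(x_1)$, since the glued orbit may be multiply covered when $\sigma_\cR(x_1)$ has large multiplicity, and one must check that the partition prescribed by the ECH index formula is consistent with the partitions arising from the corner and from $\cR'$.
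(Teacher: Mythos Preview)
Your overall architecture---induction on a complexity of $\cR_{\alpha,\beta}$, base cases from Proposition~\ref{prop:base}, and freedom to deform $a$ via Proposition~\ref{prop:deform}---matches the paper's. The divergence is in the inductive step: you propose a geometric degeneration (pinch a corner, take a Gromov limit, then glue), whereas the paper runs an \emph{algebraic} argument using $\partial^2=0$. Concretely, the paper uses Proposition~\ref{prop:deform} not to collapse vertices but to standardize $a$ near $\partial(\supp\cR_{\alpha,\beta})$ and then \emph{adjoin new orbits} outside $\supp\cR_{\alpha,\beta}$, producing an auxiliary orbit set $\tilde\alpha$ (and in Case~1 also $\tilde\beta$). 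It then classifies all $\gamma$ with $\langle\partial\tilde\alpha,\gamma\rangle\langle\partial\gamma,\tilde\beta\rangle\neq 0$ using Lemma~\ref{lem:complexity} and minimal positivity, showing that exactly two $\gamma$ occur: one giving $\langle\partial\alpha,\beta\rangle$ and one giving a product of counts for strictly smaller regions, both equal to $1$ by induction. The identity $\partial^2=0$ forces $\langle\partial\alpha,\beta\rangle=1$. This buys you the inductive step with no new analysis at all---only combinatorics plus results already proved.

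Your route has a genuine gap at exactly the point you flag. First, ``bringing $x_e$ and $x_1$ arbitrarily close'' is not a deformation through $\cR$-adapted maps for a \emph{fixed} $\cR$: the support of $\cR$ moves, so Proposition~\ref{prop:deform} does not apply, and the compactness paragraph there (which relies on $\lambda_r$ having no orbit $\rho$ in the intermediate slabs with $[\rho]=\pm\sigma_\cR$) is precisely what fails when two support points collide---an orbit in the homology class $\sigma_\cR(x_1)$ is being created. Second, the obstruction bundle gluing you invoke is not established in this paper and is a substantial piece of analysis (of Hutchings--Taubes type) that would have to handle the Morse--Bott nature of the limit, the partition conditions at the new shared orbit, and possible multiply covered limits. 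Even granting automatic transversality for the pieces, the ``exactly one glued curve per pair'' claim is not free. The paper's $\partial^2=0$ trick sidesteps all of this; if you want to salvage your proof, the cleanest fix is to replace the degeneration/gluing step by the same $\partial^2=0$ bookkeeping.
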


\begin{proof}
Since $\cR_\ab$ is minimally positive, write $\supp\cP_\alpha \sqcup \supp\cP_\beta = \{x_1 < \cdots < x_n\}$. We define the ``induction complexity'' $z(\cR_\ab)$ as follows: if $\cR_\ab$ is of the form in Proposition \ref{prop:base}, set $z(\cR_\ab) := 1$. If $n=3$ with $m_{\cP_\alpha}(x_1)=m_{\cP_\alpha}(x_3)=m^e_{\cP_\beta}(x_2) = m^h_{\cP_\beta}(x_2) = 1$ or $m_{\cP_\beta}(x_1)=m_{\cP_\beta}(x_3)=m^e_{\cP_\alpha}(x_2) = m^h_{\cP_\alpha}(x_2) = 1$,  set $z(\cR_\ab) :=2$. Otherwise, $z(\cR_\ab) :=m(\cR_\ab) \ge 3$. We induct on $z$.

Assume $x_1 \in \supp\cP_\alpha$ as the case with $x_1 \in \supp\cP_\beta$ can be argued similarly by reversing the roles of $\cP_\alpha$ and $\cP_\beta$. Further assume $m_{\cP_\alpha}^h(x_1)=1$ as the case with $m_{\cP_\alpha}^e(x_1)=1$ can be argued similarly by switching the roles of $x_i$ and $x_{n-i}$. Note that $\bar I(\cR_\ab)=0$ ensures admissibility of $\alpha$ and $\beta$ and that $Q_{\cR_\ab}(x_2) \neq 0$ since we assume $z(\cR_\ab) > 1$.

\begin{figure}
\begin{center}
\begin{tikzpicture}[scale=1.4]
\draw[->,color=red] (0,0) -- (-1,1) node[midway,above right] {$\check h_{3/8}$} -- (-2,.8) node[left] {$\cP^+$};
\draw[->,color=blue] (0,0) -- (-1,0) node[pos=.6,below] {$\check h_{1/2}$} -- (-2,-.2) node[left] {$\cP^-$};
\draw[->] (-1,-1) -- (0,0) node[midway,below right] {$\check e_{1/8}$};
\draw[->] (-1,-1) -- (-1,0) node[midway,left] {$\check e_{1/4}$};
\draw[dashed][->] (-1,0) -- (-1,1) node[midway,left] {$\check e_{1/4}$};
\begin{scope}[xshift=25ex]
\draw[->,color=red] (0,0) -- (-1,1) node[midway,above right] {$\check h_{3/8}$} -- (-2,.8) node[left] {$\cP^+$};
\draw[->,color=blue] (0,0) -- (-1,0) node[pos=.6,below] {$\check e_{1/2}$} -- (-2,-.2) node[left] {$\cP^-$};
\draw[->] (-1,-1) -- (0,0) node[midway,below right] {$\check h_{1/8}$};
\draw[->] (-1,-1) -- (-1,0) node[midway,left] {$\check e_{1/4}$};
\draw[dashed][->] (-1,0) -- (-1,1) node[midway,left] {$\check h_{1/4}$};
\end{scope}
\end{tikzpicture}
\end{center}
\vspace{-.2in}
\caption{Case 1 of the induction step.}
\label{fig:ind1}
\end{figure}

\case{1.} If $x_2 \in \supp\cP_\beta$, assume $v_{\cP_\alpha}(x_1) = (-1,1)$ and $v_{\cP_\beta}(x_2) = (-1,0)$ by $SL_2(\bZ)$-symmetry and assume $(a')^\vee|_{[0,1/2 + \epsilon)} \sim (\cos 2\pi x, \sin 2\pi x)$ using Proposition \ref{prop:deform} and $\aut(\bI)$. Write $\cP_\alpha = \cP_{\check h_{3/8}}\cP^+$ and $\cP_\beta = \cP_{1/2}\cP^-$ where $\cP_{1/2}=\cP_{\check h_{1/2}}$ if $m^h_{\cP_\beta}(1/2) > 0$ and $\cP_{\check e_{1/2}}$ otherwise. Let $\tilde\alpha$ and $\tilde\beta$ be orbit sets so that $\cP_{\tilde\alpha}= \cP_{1/8}\cP_\alpha$ and $\cP_{\tilde\beta} = \cP_{\check e_{1/4}}\cP^-$ where $\cP_{1/8} = \cP_{\check e_{1/8}}$ if $m^h_{\cP_\beta}(1/2) > 0$ and $\cP_{\check h_{1/8}}$ otherwise. See Figure \ref{fig:ind1}. We consider $\gamma$ for which $\<\partial\tilde\alpha,\gamma\>\<\partial\gamma,\tilde\beta\> \neq 0$.

Each of $\cR^1=(\cP_{\tilde\alpha},\cP_\gamma)$ and $\cR^2=(\cP_\gamma,\cP_{\tilde\beta})$ contains exactly one non-local factor $\cR^i_0$ and $\bar I(\cR^i_0) = 0$ by Lemma \ref{lem:complexity} since $\bar I(\cP_{\tilde\alpha},\cP_{\tilde\beta}) = 1$. By positivity of $\cR^i$, $\min(\supp\cP_\gamma) = 1/8$ or $1/4$. In the first case, write $\supp(\cR^2_0) = [1/8,x_0]$ where $x_0 \ge 1/2$ since $\cP_{\tilde\beta}|_{(1/4,1/2)}\equiv 0$. By minimal positivity of $\cR^2_0$, $\cP_\gamma|_{(1/8,1/2)} \equiv 0$ and by $a$-positivity of $\cR^2_0$ at $x=1/2$, $x_0$ must be $1/2$. Since $I(\cR^2_0) =1$, we have $\cP_\gamma = \cP_{1/8}\cP_\beta$ and $\<\partial \tilde\alpha,\gamma\>\<\partial \gamma, \tilde\beta\> =  \<\partial\alpha,\beta\>$ by induction hypothesis. In the latter case, $\supp(\cR^1_0) = [1/8, 3/8]$ by minimal positivity of $\cR^1_0$ and $\cP_\gamma= \cP_{\check e_{1/4}}\cP_{\check e_{1/4}}\cP^+$ or $\cP_{\check e_{1/4}}\cP_{\check h_{1/4}}\cP^+$, whichever makes $I(\cR^1_0)=1$. We verify that $z(\cR^i_0) < z(\cR_\ab)$ for each $i$: if $z(\cR_\ab) = 2$, $z(\cR^i_0)= 1$; otherwise, it follows from $m(\cR^2_0) = m(\cR_\ab) - 1$. By induction hypothesis and $\partial^2 = 0$, we conclude $\<\partial\alpha,\beta\> = 1$.

\begin{figure}
\begin{center}
\begin{tikzpicture}[scale=1.7]
\draw[->,color=red] (0,0) -- (0,1) node[midway,right] {$\check h_{1/4}$} -- (-1,1) node[pos=.3,below] {$\hat h_{x_2}$} -- (-1.8,1.1) node[left] {$\cP^+$};
\draw[->,color=blue] (0,0) -- (-1.1,.2) node[left] {$\cP^-$};
\draw[->] (0,0) -- (-1,1) node[midway,left] {$\check e_{3/8}$};
\draw[->] (0,1.05) -- (-1,1.05) node[midway,above] {$\check e_{1/2}$};
\begin{scope}[xshift=23ex]
\draw[->,color=red] (0,0) -- (0,1) node[midway,right] {$\check h_{1/4}$} -- (-1,1) node[pos=.3,below] {$\hat e_{x_2}$} -- (-1.8,1.1) node[left] {$\cP^+$};
\draw[->,color=blue] (0,0) -- (-1.1,.2) node[left] {$\cP^-$};
\draw[->] (0,0) -- (-1,1) node[midway,left] {$\check h_{3/8}$};
\draw[->] (0,1.05) -- (-1,1.05) node[midway,above] {$\check h_{1/2}$};
\end{scope}
\end{tikzpicture}
\end{center}
\vspace{-.1in}
\caption{Case 2 of the induction step.}
\label{fig:ind2}
\end{figure}

\case{2.} If $x_2 \in \supp\cP_\alpha$, assume $v_{\cP_\alpha}(x_1) = (0,1)$, $v_{\cP_\alpha}(x_2) = (-1,0)$ for $x_2=1/2+2\epsilon$ and $(a')^\vee|_{[0,1/2+\epsilon]} \sim (\cos 2\pi x, \sin 2\pi x)$ with $\cA_a|_{(1/2,x_2)} > L$. Write $\cP_\alpha = \cP_{\check h_{1/4}}\cP_{x_2}\cP^+$ where $\cP_{x_2}=\cP_{\hat h_{x_2}}$ if $m^h_{\cP_\alpha}(x_2) > 0$ and $\cP_{\hat e_{x_2}}$ otherwise. Let $\tilde\alpha$ be an orbit set so that $\cP_{\tilde\alpha}= \cP_{\check h_{1/4}}\cP_{1/2}\cP^+$ where $\cP_{1/2} = \cP_{\check e_{1/2}}$ if $m^h_{\cP_\alpha}(x_2) > 0$ and $\cP_{\check h_{1/2}}$ otherwise. See Figure \ref{fig:ind2}.

As before, if $\<\partial\tilde\alpha,\gamma\>\<\partial\gamma,\beta\> \neq 0$, each of $\cR^1=(\cP_{\tilde\alpha},\cP_\gamma)$ and $\cR^2 = (\cP_\gamma,\cP_\beta)$ contains exactly one non-local factor $\cR^i_0$ and $\bar I(\cR^i_0) = 0$. If $\min(\supp \cP_\gamma) = 1/4$, $m_{\cP_\gamma}(1/2) = 0$ by minimal positivity of $\cR^2_0$ and $m_{\cP_\gamma}(x_2) = 1$ by $a$-positivity of $\cR^1_0$, yielding $\<\partial\tilde\alpha,\gamma\>\<\partial\gamma,\beta\> = \<\partial\alpha,\beta\>$ by induction hypothesis. If $\min(\supp \cP_\gamma) > 1/4$, $\supp(\cR^1_0) = [1/4,1/2]$, so $\cP_\gamma = \cP_{\check e_{3/8}}\cP^+$ or $\cP_{\check h_{3/8}}\cP^+$, whichever makes $I(\cR^1_0)=1$. Since $z(\cR^i_0) < z(\cR_\ab)$, $\<\partial\alpha,\beta\> = 1$ by induction hypothesis and $\partial^2 =0$.
\end{proof}

\section{ECC of closed manifolds}\label{section:closed}

\subsection{Toric contact $T^3$}\label{section:t3}

Define $s_n:\bR \to \bR$ by $s_n(x):=x-n$ and let $q:\bR \to S^1=\bR/\bZ$ be the quotient map. Previous definitions regarding lattice paths and regions still make sense when we replace $\bI$ with $\bR$ or $S^1=\bR/\bZ$ but we need to supplement Definition \ref{def:region} with:
\begin{definition}
An \defn{offset (lattice) region} $\bcR$ is a triple $(\bcP^0,\bcP^1,\sigma_0)$ where $\bcP^i:S^1 \to \bar\cV$ are paths with $[\bcP^0] = [\bcP^1]$ and $\sigma_0 \in \Lambda$. The \defn{slice class} of $\bcR$ at $x_0 \in [0,1) \overset{set}{=} S^1$ is
\[  \sigma_\bcR(x_0) := \sigma_0 - \sum_{x \in [0, x_0)}  m_{\bcP^0}(x) \cdot v_{\bcP^0}(x) + \sum_{x \in [0,x_0)}  m_{\bcP^1}(x) \cdot v_{\bcP^1}(x) \in \Lambda. \]
We say $\bcR_\bR = (\bcP_\bR^0,\bcP_\bR^1)$ with $\bcP_\bR^i: \bR \to \bar\cV$ is a \defn{lift} of $\bcR$ if $\cup_{n\in \bZ} s_n^*(\bcP_\bR^i) =q^*\bcP^i$ and $\sum_{n \in \bZ}\sigma_{\bcR_\bR}(n) = \sigma_0$. We similarly define a lift of a decorated $\cR$. A \defn{lift} of a pair $(\cP^0,\cP^1)$ is a lift of $(\cP^0,\cP^1,\sigma)$ for some $\sigma \in \Lambda$. We say $\bcR$ is \defn{decomposable} if it lifts to a decomposable region.
\end{definition}

In this section, a region will always mean an offset region unless we refer to a lift, in which case we use the subscript $\bR$. A concatenation $\bcR_1\bcR_2$ still makes sense if $int(\supp\bcR_i)$ are disjoint, and so does factoring $\bcR$ into indecomposable $\bcR_k$ up to cyclic ordering.

Now consider a generic orbital moment map $a:S^1 \to (\ft^2)^*$. Define $\varphi_a,\varphi_{a'}:\bR \to \bR$ so that $a^\vee \sim (\cos\varphi_a,\sin\varphi_a)$, $(a')^\vee \sim (\cos\varphi_{a'},\sin\varphi_{a'})$, $\varphi_a(0) \in [0,2\pi)$ and $\varphi_{a'}-\varphi_a \in (0,\pi)$. By $a \times a' > 0$, $\varphi_a(1)=\varphi_a(0) + 2\pi n_a$ for $n_a \ge 1$ and $\varphi_{a'}(1)=\varphi_{a'}(0) + 2\pi n_a$. Equip $Y = S^1 \times T^2$ with a small (in the sense of Proposition \ref{prop:diff-mb}) good perturbation $\lambda$ of $\pi_{S^1}^*a$ and $\bR \times Y$ with a generic $\lambda$-admissible almost complex structure $J$. For orbit sets $\alpha$ and $\beta$ of $\lambda$, define
\[ H_2(Y,\alpha,\beta,\sigma) := H_2(Y,\alpha,\beta,0) + [S^1]\times \sigma\]
for $H_2(Y,\alpha,\beta,0) := (q \times \id_{T^2})_*H_2([0,1)\times T^2,\alpha,\beta)$ and $\sigma \in H_1(\{pt\} \times T^2)$,
\[ \cM(\ab,\sigma) := \{\cC \in \cM(\ab)| [\cC] \in H_2(Y,\ab,\sigma)\} \]
and $\cM_k(\ab,\sigma):=\{\cC \in \cM(\ab,\sigma)|I(\cC)=k\}$.

\begin{prop}\label{prop:t3}
We adapt some previous results to this setting. The proofs remain nearly identical and uses the same trivialization $\tau$ of $\xi \isom \Span\{\partial_x, \bar Q\}$.
\begin{enumerate}[(a)]
\item Proposition \ref{prop:index} now asserts $I(\cR_{\ab,\sigma}) = I(\alpha,\beta,Z)$ for $Z \in H_2(Y,\alpha,\beta,\sigma)$: if $\cR_\bR$ is a lift of $\cR_{\ab,0}$ with $\supp(\cR_\bR) \subset [0,1)$, $I(\cR_{\ab,0})=I(\cR_\bR)$. Otherwise, use \eqref{eq:indamb} and
\[ I(\cP^0,\cP^1,\sigma_1) - I(\cP^0,\cP^1,\sigma_0) = 2[\cP^0] \times (\sigma_1-\sigma_0).\]
\item Lemma \ref{lem:pos} (a) works unmodified while part (b) says: if $\bcR$ is $a$-positive, then it is positive and, whenever $a'(x)^\vee \times \sigma_\bcR(x)=0$, it lifts to $q^*a$-positive $\bcR_\bR$ with $\supp(\bcR_\bR) \subset [x,x+1]$. Lemma \ref{lem:apos} says: $\cR_{\ab,\sigma}$ is $a$-positive if $\cM(\ab,\sigma) \neq \emptyset$, and the second assertion still holds.
\item  Lemma \ref{lem:complexity} works for $\bcR=(\bcP^0,\bcP^2,\sigma^1+\sigma^2), \bcR^1=(\bcP^0,\bcP^1,\sigma^1)$ and $\bcR^2=(\bcP^1,\bcP^2,\sigma^2)$. Lemma \ref{lem:thetaconstraint} works unmodified.
\end{enumerate}
\end{prop}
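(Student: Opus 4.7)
The plan is to leverage the fact that the trivialization $\tau$ coming from $\xi \cong \Span\{\partial_x, \bar Q\}$ extends over all of $Y = S^1 \times T^2$ because $a$ is a smooth map $S^1 \to (\ft^2)^*$, so both $\partial_x$ and $\bar Q = -a^\vee$ are globally defined non-vanishing sections. Consequently $c_1(\xi) = 0$ and $c_\tau(Z) = 0$ for every $Z \in H_2(Y,\ab,\sigma)$, which was the only structural fact used at the start of Proposition \ref{prop:index}. Everything else in the proofs is either purely local in $x$ or reduces to keeping track of the offset.

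For part (a), I would split into two cases. When $\cR_\bR$ is a lift of $\cR_{\ab,0}$ supported in $[0,1)$, the surface constructed from the graph \eqref{eq:graph} can be built inside $[-1,1] \times [0,1) \times T^2$ and the computation of $Q_\tau(Z) = \sum_x Q_{\cR_\bR}(x)$ goes through verbatim. For general $\sigma$, I would handle the $\sigma$-dependence via \eqref{eq:indamb}: triviality of $\xi$ kills $c_1(\xi)$, and since each orbit sits in $\{x\} \times T^2$ representing its edge class, $PD(\Gamma) = PD([\cP^0])$ under $H_1(T^2) \hookrightarrow H_1(Y)$; its pairing with $[S^1] \times \sigma$ is $[\cP^0] \times \sigma$. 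To match this on the combinatorial side, note that changing the offset from $\sigma_0$ to $\sigma_1$ shifts every slice class by the constant $\sigma_1 - \sigma_0$, so the net change in $\sum_x Q_\cR(x)$ equals $\bigl(\sum_x m_\cR(x) v_\cR(x)\bigr) \times (\sigma_1-\sigma_0) = 2[\cP^0] \times (\sigma_1-\sigma_0)$, exactly the topological correction predicted by \eqref{eq:indamb}.

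For part (b), Lemma \ref{lem:pos}(a) is purely local and transfers unchanged. For Lemma \ref{lem:pos}(b), the original argument deducing indecomposability from $a'(x_0)^\vee \times \sigma_\bcR(x_0) = 0$ still applies; the conclusion in the circular setting is simply that cutting at $x_0$ produces a $q^*a$-positive lift $\bcR_\bR$ supported in $[x_0, x_0+1]$. Lemma \ref{lem:apos} is entirely local in $x$: the differentiation of \eqref{eq:pos} uses only a small arc $U \subset S^1$ on which $\lambda$ is unperturbed, so it is insensitive to whether the base is $\bI$ or $S^1$.

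Part (c) is where the bookkeeping demands care, and I expect it to be the main obstacle. For Lemma \ref{lem:complexity}, I would verify that the offset of a concatenation is $\sigma^1 + \sigma^2$ (consistent with how slice classes telescope across a cut) and that the identities $m^l_\bcR = m^l_{\bcR^1} + m^l_{\bcR^2} - m_{\bcP^1}$ and $\bar I(\bcR) = \bar I(\bcR^1) + \bar I(\bcR^2) + m(\bcP^1)$ still hold; once this is in place the algebraic manipulation from the original proof is unchanged. Lemma \ref{lem:thetaconstraint} needs no modification at all: the integration-by-parts identity $\Theta(\bar C) = \lim_n \int_{\partial \bar C_n}(-x\,ds) = 0$ uses only that $\bar C$ is $\bar J$-holomorphic in $\bR \times Y$ with prescribed asymptotics and the local formula $dsdx = dt_1dt_2$ on $(v,\bar J v)$, both of which are domain-independent.
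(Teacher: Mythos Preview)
Your proposal is correct and follows essentially the same route as the paper, which itself provides only the terse hints embedded in the proposition statement (the lift for $\sigma=0$, the formula $I(\cP^0,\cP^1,\sigma_1)-I(\cP^0,\cP^1,\sigma_0)=2[\cP^0]\times(\sigma_1-\sigma_0)$ together with \eqref{eq:indamb} for general $\sigma$, and the assertion that the remaining lemmas carry over with the stated modifications). Your expanded justification of the offset formula via $\sum_x m_\cR(x)v_\cR(x)=[\cP^0]+[\cP^1]=2[\cP^0]$ and the matching Poincar\'e-dual computation is exactly the verification the paper leaves implicit; one minor terminological slip is calling the relation between $\bcR,\bcR^1,\bcR^2$ in Lemma~\ref{lem:complexity} a ``concatenation'' (it is vertical stacking through the intermediate path $\bcP^1$, not horizontal concatenation), but your telescoping argument shows you have the right picture.
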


\begin{definition}\label{def:relevantlift}
We say a lift $\cR_\bR$ is \defn{relevant} if it satisfies the criteria in Theorem \ref{thm:main} {\em and} the normalizing conditions (i) $int(\supp \cR_\bR)= (x_1,x_2)$ for $x_1 \in [0,1)$ and $x_2 \in [x_1,x_1+1]$ and (ii) $\supp(m_{\cR_\bR}^{triv}) \subset [x_1,x_1+1)$.
\end{definition}

\begin{thm}\label{thm:t3}
Consider $(Y,\lambda,J)$ as above. For admissible orbit sets $\alpha$ and $\beta$ of $\lambda$, $\<\partial\alpha,\beta\> =1$ if and only if $(\cP_\alpha,\cP_\beta)$ admits a unique relevant lift.
\end{thm}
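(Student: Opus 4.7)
The plan is to establish a bijection between $\cM_1(\alpha,\beta)/\bR$ and the set of relevant lifts of $(\cP_\alpha,\cP_\beta)$, thereby reducing the statement to Theorem~\ref{thm:main} applied on $\bI \times T^2$. First, I would decompose
\[ \cM_1(\alpha,\beta)/\bR = \bigsqcup_{\sigma \in \Lambda} \cM_1(\alpha,\beta,\sigma)/\bR, \]
so that by Proposition~\ref{prop:t3}(a) only $\sigma$ with $I(\cR_{\ab,\sigma}) = 1$ contribute, and by the closed analog of Lemma~\ref{lem:apos} supplied by Proposition~\ref{prop:t3}(b), any nonempty term forces $\cR_{\ab,\sigma}$ to be $a$-positive.

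Next, I would port Proposition~\ref{prop:indecomposable} to this setting: for $\cC \in \cM_1(\ab,\sigma)$, write $\cC = C' \sqcup \cT$ with $C'$ non-trivial and $\cT$ trivial; then $C'$ has genus zero, and $\cR_{\ab,\sigma}$ decomposes cyclically as $\cT_1 \cR' \cT_2$ with $\cR'$ non-local and indecomposable. The essential refinement beyond the open case is to confine $\supp(\cR')$ to a proper arc of $S^1$: minimal positivity combined with $\bar I(\cR') \ge 0$ (Lemma~\ref{lem:pos}(a)) bounds $m^l(\cR')$, while Proposition~\ref{prop:t3}(b) forces $\sigma_{\cR_{\ab,\sigma}}$ to vanish along the complementary arc. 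I would then choose $x_1 \in [0,1)$ in this vanishing arc so that the normalizing conditions (i)--(ii) of Definition~\ref{def:relevantlift} hold; this determines $x_1$ uniquely. Since $C' \cap (\bR \times \{x_1\} \times T^2) = \emptyset$, the curve $\cC$ lifts to $\cC_\bR$ in $\bR \times [x_1, x_1+1] \times T^2$ alongside a lift $\cR_\bR$ of $\cR_{\ab,\sigma}$. Applying Theorem~\ref{thm:main} to this lifted data identifies nonempty $\cM_1^{J_\bR}/\bR$ precisely with lifts $\cR_\bR$ of the form $\cT_1 \cR' \cT_2$ satisfying all the combinatorial criteria, i.e., with relevant lifts; the converse, from a relevant lift to $\cC_\bR$ and then to $\cC$ via $q \times \id_{T^2}$, is immediate.

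This bijection yields $\#(\cM_1(\ab)/\bR) = \#\{\text{relevant lifts}\}$, and rigidity of minimally positive, almost minimally decorated regions (distinct $\sigma$ produce lifts with distinct slice classes, and the normalization pins down a single representative) constrains this count to $0$ or $1$, giving the claimed equivalence. The main obstacle will be the cutting step: I must rule out the possibility that $C'$ wraps around the base $S^1$ and verify that the normalization selects $x_1$ uniquely. Both should follow from the Morse-Bott action bound forcing $\supp(\cR')$ to occupy less than one full period of $S^1$, together with positivity of intersections used throughout \S\ref{section:main}; a secondary technical point is that Gromov compactness and the invariance argument of Proposition~\ref{prop:deform} carry over in the closed setting, which I would justify using the same circle-bundle structure on $Y$ restricted to $[x_1,x_1+1] \times T^2$.
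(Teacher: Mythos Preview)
Your overall architecture matches the paper's for the ``easy'' half: when the non-local factor $\cR'$ lifts to a region supported on a proper interval of length at most $1$, the paper does exactly what you describe---lift $C$ to $\tilde Y = \bR \times T^2$ using genus zero, then invoke the open-manifold result. This is the case $m^l(\cR_{\alpha,\beta,0}) \ge 2$.

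The genuine gap is your treatment of the wrapping case. You correctly flag it as the main obstacle, but the proposed fix (``Morse-Bott action bound forcing $\supp(\cR')$ to occupy less than one full period'') does not exist. When $m^l(\cR_{\alpha,\beta,0}) \le 1$ the indecomposable region $\cR'$ can genuinely fail to lift---its support is all of $S^1$ and $\sigma_{\cR'}$ need not vanish anywhere---so there is no arc on which to cut, and Theorem~\ref{thm:main} cannot be invoked. The paper handles these two subcases by entirely different, and substantial, arguments: for $m^l=0$ an induction on a complexity function using $\partial^2=0$ together with Lemma~\ref{lem:nokink} and Lemma~\ref{lem:complexity}; for $m^l=1$ a Morse-Bott degeneration to $(\pi_{S^1}^*a,\bar J)$ and the $\Theta$-constraint of Lemma~\ref{lem:thetaconstraint} to derive a contradiction, followed by automatic transversality to remove the smallness hypothesis on the perturbation. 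Neither of these is a consequence of positivity of intersections or an action bound.

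A secondary issue: your final step asserts the number of relevant lifts is always $0$ or $1$. Remark~\ref{rem:uniquelift} shows it can be $2$, and in those degenerate configurations the two contributions cancel over $\bZ/2$. So the correct statement is that $\langle\partial\alpha,\beta\rangle$ equals the mod-$2$ count of relevant lifts, and one must separately verify that this count is odd exactly when it equals $1$.
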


\begin{remark}\label{rem:uniquelift}(Uniqueness of a relevant lift)
Relevant lifts are possible only if we can write $\alpha = \alpha' \cup \gamma$ and $\beta = \beta' \cup \gamma$ with $m^l(\cR_{\alpha',\beta',0})=2$. Suppose $\cR_\bR$ and $\cR_\bR'$ are relevant lifts of $\cR_{\ab,\sigma}$ and $\cR_{\ab,\sigma'}$ with $\sigma \neq \sigma'$. Then, without loss of generality, $int(\supp\cR_\bR) = (x_1,x_2)$, $int(\supp\cR_\bR') = (x_2,x_1+1)$ and $m^l_{\cR_{\alpha',\beta',0}}= \delta_{x_1}+\delta_{x_2}$. Since $\cR_\bR$ is minimally positive, $\gamma$ has no orbits on $(x_1,x_2)$, and since $\sigma_{\cR_\bR'}|_{(x_1,x_2)} \equiv 0$, neither does $\alpha'\cup\beta'$. Similarly, $\alpha \cup \beta$ has no orbits on $(x_2,x_1+1)$. We conclude (i) $\supp(\cP_\alpha \cup \cP_\beta) = \{x_1,x_2\}$ and $a'(x) \times a'(x_1) \neq 0$ except at $x_1$ and $x_2$; and (ii) since $n_a \ge 1$, $a'$ is convex at $x_1$ and $x_2$ and $\beta'=\emptyset$. In all other cases, we have at most one $\sigma$ for which $\cR_{\ab,\sigma}$ admits a relevant lift. Moreover, this lift is unique unless $m^l_{\cR_{\alpha',\beta',0}} = 2\delta_{x_0}$: in this case, there are two relevant lifts (with $\supp(\cR_\bR)=[x_0,x_0+1]$), one with a hyperbolic edge at $x_0$ and the other at $x_0+1$.
\end{remark}

We first show a basic property of (offset) regions:
\begin{lem}\label{lem:nokink}
Let $\bcR=(\bcP^0,\bcP^1,\sigma)$ be $a$-positive. If $\bcP^0|_{(x_1,x_2)} \equiv 0$ and $\sigma_\bcR$ does not vanish on $(x_1,x_2)$, then $\varphi_{a'}(x_2)-\varphi_{a'}(x_1) \le \pi$ with equality only if $Q_\bcR = 0$ at $x_1$ and $x_2$. In particular, if $\bcR$ does not lift, $m(\bcP^0) > 2$.
\end{lem}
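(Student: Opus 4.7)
The plan is to track the angular constraint imposed by $a$-positivity as $x$ runs over $(x_1,x_2)$. Since $\bcP^0|_{(x_1,x_2)}\equiv 0$, $\sigma_\bcR$ is piecewise constant on this interval, and its only jumps occur at points $y\in \supp\bcP^1\cap(x_1,x_2)$, where $a$-compatibility of $\bcP^1$ forces the jump $m_{\bcP^1}(y)v_{\bcP^1}(y)$ to be a positive multiple of $(a'(y))^\vee$. The hypothesis $\sigma_\bcR\neq 0$ on $(x_1,x_2)$ combined with $a$-positivity $(a'(x))^\vee\times \sigma_\bcR(x)\ge 0$ then means $\sigma_\bcR(x)$ sits in the closed half-plane counter-clockwise from $(a'(x))^\vee$.

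I would introduce the phase function $\Psi(x):=\varphi_{\sigma_\bcR}(x)-\varphi_{a'}(x)$, choosing a continuous angle $\varphi_{\sigma_\bcR}$ on each sub-interval between jumps; the constraint above normalizes $\Psi\in[0,\pi]$. Between jumps $\varphi_{\sigma_\bcR}$ is constant, so $\Psi$ varies continuously with $-\varphi_{a'}$. At a jump $y$, a short planar check using $\sigma_\bcR(y^+)=\sigma_\bcR(y^-)+t(a'(y))^\vee$ with $t>0$ shows $\varphi_{\sigma_\bcR}(y^+)\in[\varphi_{a'}(y),\varphi_{\sigma_\bcR}(y^-)]$, i.e.\ $\Psi$ can only jump downward. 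Telescoping,
\[ \varphi_{a'}(x_2)-\varphi_{a'}(x_1) = \bigl[\varphi_{\sigma_\bcR}(x_2^-)-\varphi_{\sigma_\bcR}(x_1^+)\bigr] + \bigl[\Psi(x_1^+)-\Psi(x_2^-)\bigr] \le 0+\pi, \]
with equality only if $\Psi(x_1^+)=\pi$, $\Psi(x_2^-)=0$, and no strict drop at any interior jump. The endpoint conditions force $\sigma_\bcR(x_1^+)\sim -(a')^\vee(x_1)$ and $\sigma_\bcR(x_2^-)\sim (a')^\vee(x_2)$, both parallel to the corresponding $v_\bcR(x_i)\sim (a'(x_i))^\vee$. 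Since any jump of $\sigma_\bcR$ at $x_i$ itself lies along $(a'(x_i))^\vee$, we get $v_\bcR(x_i)\times\sigma_\bcR(x_i)=v_\bcR(x_i)\times\sigma_\bcR(x_i^\pm)=0$, giving $Q_\bcR(x_i)=0$.

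For the ``in particular'' clause, I would argue that if $\bcR$ does not lift then $\sigma_\bcR$ must be everywhere nonzero on $S^1$ (otherwise a vanishing point of $\sigma_\bcR$ furnishes a cut that lets one read off a compatible lift). The complement of $\supp\bcP^0$ in $S^1$ then decomposes into at most $m(\bcP^0)$ open arcs; applying the first part to each arc, the $\le\pi$ bounds must add up to the total winding $\varphi_{a'}(x{+}1)-\varphi_{a'}(x)=2\pi n_a\ge 2\pi$, forcing $m(\bcP^0)\ge 2$. If $m(\bcP^0)=2$ then equality holds in each arc and $n_a=1$, so by the equality statement $v_\bcR(z_i)\times\sigma_\bcR(z_i)=0$ at each of the two support points $z_1,z_2$ of $\bcP^0$; tracking $\sigma_\bcR$ across $z_1$, where it must transition from a positive to a negative multiple of $(a'(z_1))^\vee$, yields a lift, contradicting non-liftability.

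The most delicate points are the monotone-down behavior of $\varphi_{\sigma_\bcR}$ at jumps (routine planar geometry, but the sign bookkeeping has to be done carefully using $a$-compatibility) and the final contradiction in the ``in particular'' clause, where one has to convert the boundary conditions $\Psi(z_i^\pm)\in\{0,\pi\}$ and $Q_\bcR(z_i)=0$ into the actual construction of a lift $\bcR_\bR$ in the sense of the definition.
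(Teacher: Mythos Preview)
Your argument is essentially the paper's: introduce the angle $\varphi_\sigma$ of $\sigma_\bcR$ with $\varphi_\sigma-\varphi_{a'}$ constrained to $[0,\pi]$ by $a$-positivity (the paper writes the open interval $(0,\pi)$), observe that $\varphi_\sigma$ is piecewise constant with downward jumps at $\supp\bcP^1$, and telescope; the paper dispatches the ``in particular'' clause with the single phrase ``follows from $n_a\ge 1$'', so your treatment is already more detailed than the original. The step you flag as delicate closes immediately: your equality conditions force $(a'(z_1))^\vee\times\sigma_\bcR(z_1)=0$, and then Proposition~\ref{prop:t3}(b) (stated just before this lemma) directly furnishes the lift.
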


\begin{proof}
By $a$-positivity, we can choose $\varphi_\sigma:(x_1,x_2) \to \bR$ so that $\sigma_\bcR \sim (\cos \varphi_\sigma,\sin\varphi_\sigma)$ and $\varphi_\sigma - \varphi_{a'}\in (0,\pi)$. The first statement follows since $\varphi_\sigma$ jumps by $\vartheta \in (-\pi,0)$ at $\supp\bcP^1$ and is constant everywhere else. The second follows from $n_a \ge 1$.
\end{proof}

\begin{proof}[Proof of Theorem \ref{thm:t3}]
Suppose $m^l(\cR_{\ab,0}) \ge 2$. Equip $\tilde Y = \bR \times T^2$ with a contact form $\tilde \lambda := (q \times \id_{T^2})^*\lambda$ and $\bR \times \tilde Y$ with an almost complex structure $\tilde J := \tilde q_*^{-1}J \tilde q_*$ where $\tilde q = \id_\bR \times q \times \id_{T^2}$. We claim $\cM_1(\alpha,\beta) = \cup \cM_1^{\tilde J}(\tilde\alpha,\tilde\beta)$ where the union is over orbit sets $\tilde\alpha$ and $\tilde\beta$ of $\tilde\lambda$ for which $\cR_{\tilde\alpha,\tilde\beta}$ is a relevant lift of $(\cP_\alpha,\cP_\beta)$. If $C \in \cM_1(\ab)$, the non-trivial component of $C$ has genus zero by \eqref{eq:fredind} so there is $\tilde C \in \cM_1^{\tilde J}(\tilde\alpha,\tilde\beta)$ where $\tilde\alpha$ and $\tilde\beta$ are orbit sets of $\tilde \lambda$ and $\tilde q (\tilde C) = C$. If $int(\supp\cR_{\tilde\alpha,\tilde\beta}) = (x_1,x_2)$ with $x_1 \in [0,1)$,
\[ 0 = a'(x_1)^\vee \times \sigma_{\cR_{\ab,\sigma}}(x_1) =\sum_{n \in \bZ} a'(x_1)^\vee \times \sigma_{\cR_{\tilde\alpha,\tilde\beta}}(x_1+n), \]
so by $q^*a$-positivity of $\cR_{\tilde\alpha,\tilde\beta}$, $x_2 \le x_1+1$, proving the claim.

It remains to show: there does not exist a pair $(\alpha,\beta)$ with $m^l(\cR_{\ab,0}) \le 1$ and $\#(\cM_1(\alpha,\beta)/\bR) = 1$.

\case{1.} If there exists $(\ab)$ with $m^l(\cR_{\ab,0})=0$ and $\#(\cM_1(\ab)/\bR) = 1$, pick one with the minimal $z(\ab):=m(\cR_{\ab,0}) + \sum_{x\in [0,1)} x \cdot m_{\cP_\alpha}(x)/N$. Lemma \ref{lem:nokink} guarantees $0 \le x_1<x_2 <1$ so that $[x_1,x_2] \cap \supp(\cP_\alpha)=\{x_1,x_2\}$ and $\varphi_{a'}(x_1) < \varphi_{a'}(x_2)$. Let $x_0$ be the largest $x\in (x_1,x_2)$ with $\varphi_{a'}(x)=\varphi_{a'}(x_2)$ and let $\alpha'$ be such that $\cR_{\alpha,\alpha,0} = \cT\cT_0$ and $\cR_{\alpha',\alpha,0} = \cT\cR_0$ where $\cT_0$ is a local bigon at $x_2$ and $\cR_0$ is a bigon with $\supp\cR_0 = [x_0,x_2]$ and $I(\cR_0) = 1$.

Suppose $\sigma^1, \sigma^2$ and $\gamma$ satisfy $\#(\cM_1(\alpha',\gamma,\sigma^1)/\bR) = \#(\cM_1(\gamma,\beta,\sigma^2)/\bR) = 1$. Applying Proposition \ref{prop:t3}(c) on the underlying $\bcR^1$ and $\bcR^2$ of $\cR^1:=(\cP_{\alpha'},\cP_\gamma,\sigma^1)$ and $\cR^2:=(\cP_\gamma,\cP_\beta, \sigma^2)$, $\sum m^l(\bcR^j_0) +  \sum \bar I(\bcR^j_0) = 2$ where $\bcR^j_0$ is the (unique by \eqref{eq:fredind}) non-local factor of $\bcR^j$. If neither $\bcR^j$ lifts, $m(\cP_\gamma) = m^s(\bcR^1,\bcR^2) \le 1$, contradicting Lemma \ref{lem:nokink}. Hence, $\cR^2$ does not lift with $m^l(\cR^2) = 0$, while $\cR^1_0$ lifts to $(\cP^0_\bR,\cP^1_\bR)$ with $m(\cP^1_\bR)= m^s(\bcR^1,\bcR^2)=1$. By our choice of $(\ab)$, $\cR^1_0$ must be a bigon due to the first term in $z(\ab)$, and $\gamma = \alpha$ due to the second, contradicting $\<\partial^2\alpha',\beta\> = 0$. Here, $\alpha,\beta$ and $\alpha'$ are admissible by $m^l(\cR_{\ab,0})=0$ and construction.

\case{2.} If $m^l(\cR_{\ab,0})=1$, we take any non-local $\cR_{\ab,\sigma}$ and show $\cM_1(\ab,\sigma) = \emptyset$ for $(\lambda,J)$ sufficiently close to $(\pi_{S^1}^*a,\bar J)$. Then this holds true for any good $\lambda$ and generic $\lambda$-admissible $J$ by automatic transversality \cite{wendl} since $m^h(\cP_\alpha\cup \cP_\beta) = 1$. We proceed similarly to the proof of Proposition \ref{prop:diff-mb} and only highlight the differences. Suppose a sequence $C_n \in \cM^{J_n}_1(\alpha,\beta,\sigma)$ of $J_{r_n}$-holomorphic curves with $(\lambda_n,J_n)\to (\pi_{S^1}^*a,\bar J)$ converges to a $\bar J$-holomorphic building $\bar C = (\bar C^1, \cdots, \bar C^l)$ where $\bar C^i \in \cM(\alpha^i,\beta^i,\sigma^i)$ for orbit sets $\alpha^i$ and $\beta^i$ of $\pi_{S^1}^*a$. Let $\bcP^{i-1}=\bcP_{\alpha^i}$ and $\bcP^i=\bcP_{\beta^i}$ for $1 \le i \le l$ and let $i_1$ and $i_2$ be the smallest and the largest $i$ such that $\bcR^i :=(\bcP^{i-1},\bcP^i,\sigma^i)$ is non-local.

By Lemma \ref{lem:thetaconstraint}, $i_1 < i_2$ and by Proposition \ref{prop:t3}(c) on $(\bcR')^1 = (\bcP^0,\bcP^{i_1},\sum_{i=1}^{i_1}\sigma^i)$ and $(\bcR')^2 = (\bcP^{i_1},\bcP^l, \sum_{i=i_1+1}^l \sigma^i)$,$\sum m^l((\bcR')^j_k) + \sum \bar I((\bcR')^j_k) = 2$. Since at least one $(\bcR')^j$ lifts (see Case 1), $m^s((\bcR')^1,(\bcR')^2) = 1$ and there are exactly two non-local factors, each with $\bar I((\bcR')^j_k)=0$, one of which lifts with $m^l((\bcR')^j_k)=2$ and the other does not lift with $m^l((\bcR')^j_k)=0$. In turn, $\bcR^{i_1}=(\bcR')^1$, $\bcR^{i_2}=(\bcR')^2$ and all other $\bcR^i$ is local. Write $m^s_{\bcR^{i_1},\bcR^{i_2}} = \delta_{x_0}$ and suppose $\brho_{x_0}$ is convex, i.e., $m^l(\bcR^{i_2}_0) = 2$ for the non-local factor of $\bcR^{i_2}$ and $m^l(\bcR^{i_1})=0$. By Step 2 in the original proof, the non-trivial component $\bar C^{i_2}_0$ of $\bar C^{i_2}$ has a positive end at $\brho_{x_0}(\theta^+)$ and no other end at $x_0$ while $\bar C^{i_1}$ has a negative end at $\brho_{x_0}(\theta^-)$ and all other ends at some $\brho_x^m(\theta_x^{\max})$. By Lemma \ref{lem:thetaconstraint},
\[ \Theta(\bar C^{i_1}) = (m(\bcR^{i_1})-1)\theta_h/N - \theta^- = 0  \]
and
\[ \Theta(\bar C^{i_2}_0) = (m(\bcR^{i_2}_0)-2)\theta_h/N + \theta_h + \theta^+ = 0. \]
Hence, $0 < \theta^- < \theta_h$, $-2\theta_h < \theta^+ < -\theta_h$ and there is no flow of $f_{x_0}$ from $\theta^-$ to $\theta^+$. Draw a similar contradiction if $\brho_{x_0}$ is concave.
\end{proof}

\subsection{Toric contact $L(p,q), p \neq 0$}\label{section:lens}

Let $a:\bI \to \ft^2$ be an orbital moment map and suppose $a(i)^\vee \sim (-1)^iu_i$ for a primitive $u_i \in \Lambda$ for $i=0,1$. Collapse $u_i$-orbits at each $\{i\} \times T^2$ to obtain a contact lens space $(Y,\blambda)$ \cite{contactcut}. If $(u_0|u_1) \sim \smat{p & 0 \\ q & 1}$ up to $SL_2(\bZ)$, $Y$ is diffeomorphic to $L(p,q)$ with $H_1(Y)=\Lambda/\Span{u_i}$. Fix $v_i \in \bZ^2$ so that $\det\pmat{u_i|v_i} = (-1)^i$. Over each new embedded orbit $e_i$ with image $(\{i\} \times T^2)/u_i$, $v_i$-action trivializes $\xi$, with respect to which $e_i$ is elliptic with rotation angle $\phi_i$ given by $a'(i)^\vee \sim v_i - \phi_i u_i$. Trivialize $\xi$ over orbits in $Y^o=int(\bI) \times T^2$ as before.

To perturb $\blambda$, let $v_a(x)$ be as in \S \ref{section:mb} on $int(\bI)$ and $v_a(i)=v_i$ for $i=0,1$. Define $\cA_a$, $\Xi_L$ and $N$ as before and choose disjoint neighbourhoods $U_x$ of $x \in \Xi_L$ on which $a' \times a''$ does not vanish. We take a small perturbation $\lambda$ of $\blambda$ which is good on each $U_x$ for $x \in \Xi_L \setminus \{0,1\}$ and unperturbed elsewhere. Assume $m\phi_i \not \in \bZ$ for $0<m<N$ and fix a generic $\lambda$-admissible almost complex structure $J$ on $\bR \times Y$.

Additionally, fix $\epsilon_0,\epsilon_1 > 0$, $\tilde x_0 < -\epsilon_0$ and $\tilde x_1 > 1+\epsilon_1$ and let $V_0:= [\tilde x_0,-\epsilon_0)$, $V_1:=(1+\epsilon_1,\tilde x_1]$, $V_0':=[\tilde x_0,0]$ and $V_1':=[1,\tilde x_1]$. Extend $a$ to an orbital moment map $\tilde a$ on $[\tilde x_0,\tilde x_1]$ so that, for each $i=0,1$, (i) $\tilde a'|_{V_i' \setminus V_i}$ does not annihilate any $nv_i - n'u_i \in \Span_\bZ\{v_i,u_i\}$ with $\abs{n} < N$; (ii) $\tilde a$ is convex on $V_i$; and (iii) $a(i) \times \tilde a'(x) \ge 0$ for $x \in V_i$ with equality only at $x=\tilde x_i$. In this section, paths will be functions on $[\tilde x_0,\tilde x_1]$.

\begin{figure}
\begin{center}
\begin{tikzpicture}[scale=.7]
\draw[->] (-.05,0) -- (-.05,-6) node[left] {$u_0$};
\draw[->] (0,.05) -- (9,.05) node[below right] {$-u_1$};
\draw[color=red,<-] (6,-2) -- (3.4, -3.3) node[below right] {$\cP_{\alpha^o}$} -- (2,-5);
\draw[color=red,<-] (0.08,-.05) -- (7.6,-.05) node[midway, below left] {$d_1^+u_1$}-- (7,-.9) node[below right] {$\cP_{(e_1,m_1)}$} -- (6,-2);
\draw[color=red,<-] (2,-5)--(0.05,-5.5) node[midway, below right] {$\cP_{(e_0,m_0)}$} -- (0.05,-.08) node[midway,left] {$d_0^+u_0$};
\draw[densely dotted,->] (6,-2) -- (6,0) node[midway,left] {$m_1v_1$};
\draw[densely dotted] (6,-2) -- (8.3,0);
\draw[densely dotted,->] (0,-5) -- (2,-5) node[midway,above] {$m_0v_0$};
\draw[densely dotted] (0,-5.7) -- (2,-5);
\draw (2.5,-2) node {$\cR_{\alpha,\emptyset}$};
\draw[fill,blue] (-.05,.05) circle(.05) node[below left] {$\cP_\emptyset$};
\end{tikzpicture}
\end{center}
\vspace{-.3in}
\caption{$\cR_{\alpha,\emptyset}$ for orbit sets in $(S^3,\lambda)$.}
\label{fig:lenspath}
\end{figure}

\begin{definition}\label{def:lensassoc}
Let $\alpha$ and $\beta$ be orbit sets of $\lambda$ with $[\alpha]=[\beta] \in H_1(Y) \isom \bZ/p$.
\begin{enumerate}[(a)]
\item For $i=0,1$, let $w_{i,n}:= nv_i -\floor{n\phi_i}u_i$. To $\gamma =(e_i,m_i)$, we \defn{associate} the unique $\tilde a$-compatible path $\cP_\gamma$ with $m^h_{\cP_\gamma}\equiv 0$ and $m^e_{\cP_\gamma} := \sum_n b_n\delta_{x_n}$, where $n$ appears in $p_{\phi_i}^+(m_i)$ with multiplicity $b_n$ and $x_n$ is the unique $x \in V_i$ with $\tilde a'(x)^\vee \sim w_{i,n}$. To an orbit set $\gamma^o$ in $Y^o$, \defn{associate} $\cP_{\gamma^o}$ as before. In general, we \defn{associate} to $\gamma= \gamma^o \cup \{(e_0,m_0),(e_1,m_1)\}$ the path $\cP_\gamma := \cP_{(e_0,m_0)}\cP_{\gamma^o}\cP_{(e_1,m_1)}$.
\item Write $[\cP_\beta]-[\cP_\alpha] = d_0u_0 + d_1 u_1$ and $d_i = d_i^+-d_i^-$ such that $d_i^\pm \ge 0$ and $d_i^+d_i^-=0$ for each $i$. To $\alpha$ and $\beta$, we \defn{associate} the region $\cR_\ab=(\cP^+_0 \cP_\alpha \cP^+_1,\cP^-_0\cP_\beta\cP^-_1)$ where $\cP^\pm_i$ is $(u_i, 1, d^\pm_i, 0)$ at $\tilde x_i$ and vanishes elsewhere.
\end{enumerate}
\end{definition}

Note $\cP_{(e_i,m_i)}$ is an interpretation of $\Lambda_{\phi_i}^+(m_i)$ in \S \ref{section:ech} as a path. Figure \ref{fig:lenspath} shows $\cR_{\alpha,\emptyset}=(\cP^+,\cP_\emptyset)$ for an orbit set $\alpha$ of $(S^3,\lambda)$, where $\cP^+$ goes around clockwise.

\begin{thm}\label{thm:lens}
Let $(Y,\lambda,J)$ as above. For admissible orbit sets $\alpha$ and $\beta$ of $\lambda$, $\<\partial\alpha,\beta\> =1$ if and only if $\cR_\ab = \cT_1\cR' \cT_2$ where $\cT_i$ are trivial and $\cR'$ is non-local, indecomposable, $\tilde a$-positive, minimally positive and almost minimally decorated.
\end{thm}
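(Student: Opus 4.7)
The plan is to reduce the lens space problem to the already-proved cylindrical case (Theorem \ref{thm:main}) applied to the extended orbital moment map $\tilde a$ on $\tilde Y := [\tilde x_0, \tilde x_1]\times T^2$, in direct analogy with how Theorem \ref{thm:t3} lifts the $S^1$-case to the $\bR$-case. The crucial geometric input is that the contact cut collapsing $u_i$-orbits at $\{i\}\times T^2$ can be analytically ``reversed'' near $e_i$: under the partition condition, a positive end of total multiplicity $m_i$ at $e_i$ is forced to be the partition $p_{\phi_i}^+(m_i)$, and this partition is exactly the horizontal displacements of the polygonal path $\Lambda_{\phi_i}^+(m_i)$ whose edges $w_{i,n}=nv_i - \lfloor n\phi_i\rfloor u_i$ are precisely the primitive directions of $\tilde a'$ on $V_i$. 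Thus $\cP_{(e_i,m_i)}$ records exactly the data one expects from a curve in $\tilde Y$ having its ends at the (small) elliptic orbits $\brho_{x_n}$ near $\tilde x_i$.

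Concretely, I would carry out the following steps. First, construct a $\tilde\lambda$-admissible almost complex structure $\tilde J$ on $\bR\times\tilde Y$ which agrees with $J$ on $\bR\times Y^o$ and is Morse--Bott model-like on $\bR\times V_i'\times T^2$, with $f_{x_n}$ chosen so that the Morse flow between neighbouring elliptic orbits realises the ``edge merging'' along $\Lambda_{\phi_i}^+(m_i)$. Second, set up the bijection
\[ \cM_1^J(\alpha,\beta)/\bR \;\longleftrightarrow\; \cM_1^{\tilde J}(\tilde\alpha,\tilde\beta)/\bR \]
where $\tilde\alpha,\tilde\beta$ are orbit sets of $\tilde\lambda$ whose associated $\tilde a$-compatible paths are exactly $\cP_\alpha,\cP_\beta$ from Definition \ref{def:lensassoc}. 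In the $\Rightarrow$ direction, one attaches to each end at $(e_i,m_i)$ a Morse--Bott building in $V_i'$ whose topology is prescribed by $\Lambda_{\phi_i}^+(m_i)$ (this is the standard asymptotic analysis for ends at elliptic orbits satisfying the partition condition, as in \cite{bn}); in the $\Leftarrow$ direction, a gluing/collapsing argument produces $C\in\cM_1^J(\ab)$. Third, check that the ECH index is preserved: with the trivialisation $\tau$ specified (the $v_i$-action over $e_i$, and $\Span\{\partial_x,\bar Q\}$ over $Y^o$), the contributions $CZ_\tau(e_i^{m_i})$ and $Q_\tau$ near $\{i\}\times T^2$ match the combinatorial $CZ_{\cR}(x)$ and $Q_{\cR}(x)$ summed over $x\in V_i$, so that $I(\cR_{\ab})=I(\cR_{\tilde\alpha,\tilde\beta})$. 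Fourth, invoke Theorem \ref{thm:main} on $\tilde Y$ to conclude $\langle\partial\tilde\alpha,\tilde\beta\rangle=1$ iff $\cR_{\tilde\alpha,\tilde\beta}$ factors as $\cT_1\cR'\cT_2$ with $\cR'$ of the required form, and finally observe that such a factorisation on $\tilde Y$ is equivalent to the corresponding factorisation of $\cR_{\ab}$, since the region is concentrated in $(\tilde x_0,\tilde x_1)$ and the extra edges near $\tilde x_i$ arise only from the $\cP_{(e_i,m_i)}^\pm$ pieces.

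I expect the main obstacle to be Step 2, i.e., constructing and verifying the moduli bijection. The subtlety is twofold: (i) one must control curves that stray into $V_i\times T^2$, which a priori could break into several buildings connecting the intermediate elliptic orbits $\brho_{x_n}$, and show using the $\Theta$-constraint (Lemma \ref{lem:thetaconstraint}) together with the convexity of $\tilde a$ on $V_i$ and the non-resonance condition $m\phi_i\notin\bZ$ that the only ECH-index-1 configurations realise the maximally concave path $\Lambda_{\phi_i}^+(m_i)$; and (ii) one must ensure transversality and Gromov compactness uniformly as $(\lambda,J)$ approaches the unperturbed model, analogous to Proposition \ref{prop:deform} and the compactness argument in the proof of Theorem \ref{thm:t3}, so that the mod-$2$ count is genuinely invariant. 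Once these analytical points are settled, the combinatorial content of the theorem follows immediately by applying the already-proved Theorem \ref{thm:main} to $(\tilde Y,\tilde\lambda,\tilde J)$.
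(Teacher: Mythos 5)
The approach you propose does not match the paper's, and the central step (your Step 2) has a genuine gap. You ask for a bijection
\[ \cM_1^J(\alpha,\beta)/\bR \;\longleftrightarrow\; \cM_1^{\tilde J}(\tilde\alpha,\tilde\beta)/\bR \]
between curves in $\bR\times L(p,q)$ with ends at covers of the collapsed orbits $e_i$ and curves in $\bR\times[\tilde x_0,\tilde x_1]\times T^2$ with ends at the $T^2$-invariant elliptic orbits $\brho_{x_n}$ on $V_i$. No such correspondence exists. The contact cut is a genuine topological modification: near $e_i$ the lens space is a solid torus $D^2\times S^1$ and a positive end at $e_i^{m}$ is an asymptotic condition along the core cylinder $\bR\times\{0\}\times S^1$, whereas near $\tilde x_i$ the extended manifold is still a product $I\times T^2$ and an end at $\brho_{x_n}$ is an asymptotic condition along a cylinder over a torus knot. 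These are not related by a gluing or Morse--Bott degeneration; the partition $p^+_{\phi_i}(m_i)$ and the path $\Lambda^+_{\phi_i}(m_i)$ carry the same combinatorial information, which is precisely why the paper uses $\tilde a$ to \emph{define} $\cP_{(e_i,m_i)}$, but this is a bookkeeping device that makes Lemmas \ref{lem:pos}, \ref{lem:apos}, \ref{lem:complexity} and Proposition \ref{prop:index} carry over, not a geometric operation on moduli spaces. You flag Step 2 as ``the main obstacle'' -- it is in fact the obstruction.

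The paper's actual route is quite different. It re-proves the needed structural results on $L(p,q)$ directly (Proposition \ref{prop:lens}: the index formula with the $v_i$-trivialization over $e_i$, positivity via Lemma \ref{lem:strata}, the analogue of Proposition \ref{prop:indecomposable} including the constraint that $C'$ has at most one end at covers of $e_0$ or $e_1$ and genus zero, and a deformation invariance statement). Then it reduces by induction (as in Proposition \ref{prop:suff}) and by the duality of Proposition \ref{prop:base}(iii) to a base case in which the nontrivial end data is a single $\check h_{x^o}$ plus at most one end at $e_0$, and it computes the base count not by appealing to Theorem \ref{thm:main} on an extended cylinder, but by identifying a neighbourhood $Y'=\pi_\bI^{-1}([0,1-\epsilon))\cong D^2\times S^1$ with a subset of Taubes's model $(S^2\times S^1,\alpha)$ in \cite{taubes2} and using a finite-dimensional Morse--Bott argument, plus a $\bZ/m$-covering trick for higher multiplicities at $e_0$. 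If you wish to salvage your plan, you would need to replace the claimed moduli bijection by a direct verification of the index, positivity and indecomposability lemmas in the lens-space setting and then supply the base-case counts from an explicit model such as Taubes's; this is exactly what Proposition \ref{prop:lens} and the case analysis in the paper's proof do.
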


We observe the following property of $\cR_\ab$ on $V_i$:
\begin{lem}\label{lem:strata}
If the function $(\tilde a')^\vee \times \sigma_{\cR_\ab}$ vanishes at $x_i \in V_i'$ for $i=0$ (or 1), then $x_i \in V_i$ and $\sigma_{\cR_\ab} =0$ for $x < x_0$ (or $x > x_1$). In particular, if it is positive at $x=i$, then it is non-negative on $V_i'$, $\cR_\ab$ is not minimally decorated and $\sum_{x \in V_i} I_{\cR_\ab}(x) > 0$.
\end{lem}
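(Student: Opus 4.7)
I will treat the $i = 0$ case; the $i = 1$ case is symmetric with the direction of the inequality on $x$ reversed. Set $\sigma := \sigma_{\cR_\ab}$.

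The first step is to reduce to $x_0 \in V_0$. On $V_0' \setminus V_0 = [-\epsilon_0, 0]$ neither $\cP_\alpha$ nor $\cP_\beta$ has edges (by Definition \ref{def:lensassoc}, the $e_0$-edges live in $V_0$ and $Y^o$-orbits live in $(0,1)$), so $\sigma$ is constant there. Writing this constant as a $\bZ$-linear combination of $u_0$ (from $\cP^\pm_0$ at $\tilde x_0$) and of the primitive vectors $w_{0,n} = nv_0 - \floor{n\phi_0}u_0$ (from the partition edges in $V_0$), I would observe that it lies in $\Span_\bZ\{v_0, u_0\}$ with $v_0$-coefficient $n_0 - m_0$, where the action bound forces $|n_0 - m_0| < N$. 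Hypothesis (i) on $\tilde a$ rules out $\tilde a'(x)$ annihilating such a nonzero vector for any $x \in V_0' \setminus V_0$, so a zero of the cross product there forces $\sigma \equiv 0$ on all of $V_0' \setminus V_0$; I can then slide $x_0$ leftward into $V_0$ without losing the vanishing.

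For $x_0 \in V_0$, I would use convexity (ii) of $\tilde a$, which makes $(\tilde a')^\vee$ rotate monotonically from the direction of $u_0$ at $\tilde x_0$ through each $w_{0,n}$ at $x_n$ in the concave order prescribed by $\Lambda^+_{\phi_0}$. The slice class then equals $\sigma(x_0) = -d_0 u_0 + \sum_{x_n < x_0}(c_n - b_n) w_{0,n}$, with $b_n, c_n$ the multiplicities of $n$ in $p^+_{\phi_0}(m_0)$ and $p^+_{\phi_0}(n_0)$ respectively. Demanding that $\sigma(x_0)$ be parallel to $(\tilde a')^\vee(x_0)$, together with the bound on the $v_0$-coefficient and the primitivity of each $w_{0,n}$, should force every partial sum and the $-d_0 u_0$ jump to vanish; my plan is to prove this by induction on the smallest $n$ with $b_n \ne c_n$, using the strict angular monotonicity of the swept directions to argue that any nontrivial prefix would keep $\sigma$ off the line through $(\tilde a')^\vee(x_0)$. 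Hence $\sigma(x) = 0$ for all $x < x_0$.

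For the "in particular" clause, suppose $(\tilde a')^\vee(0) \times \sigma(0) > 0$. If the cross product were negative somewhere on $V_0'$, two applications of the intermediate value theorem would yield zeroes $x_0 < x_0'$, whereupon the main claim applied at $x_0'$ would force $\sigma \equiv 0$ on all of $[\tilde x_0, x_0')$, contradicting positivity at $x = 0$. Hence the cross product is non-negative on $V_0'$. Next, Definition \ref{def:region}(e) with $i = 0$ and $c_{\cP^0} = 1$ at convex $\tilde a$-points demands $\cP^0$'s edges be hyperbolic for minimal decoration, whereas Definition \ref{def:lensassoc}(a) makes the edges at $x_n \in V_0$ purely elliptic, so $\cR_\ab$ is not minimally decorated. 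Finally, $\sum_{x \in V_0} I_{\cR_\ab}(x) > 0$ follows by combining the strictly positive $Q_{\cR_\ab}$-contribution (coming from the non-negativity of the cross product and its strict positivity at $x = 0$) with the non-negative $CZ_{\cR_\ab}$-contribution at $\tilde x_0$ (equal to $d_0 \ge 0$).

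The main obstacle will be Step 2, where making precise the inductive cancellation requires careful bookkeeping of the concave polygonal partition structure against the strict monotone rotation of $(\tilde a')^\vee$ on $V_0$; I expect the primitivity of each $w_{0,n}$ combined with the order in which $(\tilde a')^\vee$ visits these directions to rule out any nontrivial partial sum that keeps $\sigma$ parallel to the eventual direction $(\tilde a')^\vee(x_0)$.
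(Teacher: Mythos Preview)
Your approach for the main statement matches the paper's terse ``follows from the definition of $\tilde a$ and $\Lambda^+_{\phi_i}$'': Step~1 correctly invokes hypothesis~(i), and Step~2's inductive plan is in the right direction, though you leave it as a sketch. (A cleaner route for Step~2 is to first prove the cross product is everywhere $\ge 0$ on $V_0'$ and then analyze its zeroes; note it is \emph{continuous}, since at every edge point the jump of $\sigma$ is a multiple of $v_\cR(x)\sim(\tilde a')^\vee(x)$.)

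The genuine gap is in the ``in particular'' clause, specifically your argument for $\sum_{x \in V_0} I_{\cR_\ab}(x) > 0$. Two problems. First, the $Q$-contribution need not be strictly positive: with $m_0^+ = 1$, $m_0^- = 0$, $d_0 = 0$, the only edge is at $x_1$ where $\sigma(x_1) = 0$, so $\sum Q_\cR = 0$; the hypothesis that the cross product is positive at $x=0$ says nothing about edge points. Second, you account only for $CZ_\cR(\tilde x_0)=d_0$, but since all edges in $V_0$ are elliptic convex one has $CZ_\cR(x_n) = b_n - c_n$, which can be negative; these terms cannot be ignored. A correct argument runs as follows: let $x^*$ be the largest zero of the (continuous) cross product in $V_0$. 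By the first statement $\sigma \equiv 0$ on $[\tilde x_0, x^*]$, hence $I_\cR(x) = 0$ for $x<x^*$. At $x^*$ there must be an edge, $Q_\cR(x^*)=0$, and positivity of the cross product just to the right forces $m_{\cP^0}(x^*) - m_{\cP^1}(x^*) > 0$, i.e.\ $CZ_\cR(x^*) > 0$. For edge points $x > x^*$ one has $v_\cR(x)\times\sigma(x)\ge 1$, so $Q_\cR(x) \ge m_\cR(x) \ge m_{\cP^1}(x) \ge -CZ_\cR(x)$ and $I_\cR(x) \ge 0$. Summing gives $\sum I_\cR \ge I_\cR(x^*)>0$. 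Your ``not minimally decorated'' argument has the related smaller gap that you must check $\cP^0$ actually has an edge in $V_0$; this is exactly $m_{\cP^0}(x^*)>0$ from the same local analysis.
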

\begin{proof}
The first statement follows from the definition of $\tilde a$ and $\Lambda_{\phi_i}^+$. The rest follows from this and that all edges of $\cR_\ab$ are elliptic convex.
\end{proof}

\begin{prop}\label{prop:lens}
To adapt previous results, write $\alpha=\alpha^o\cup \{(e_0,m_0^+),(e_1,m_1^+)\}$, $\beta = \beta^o\cup \{(e_0,m_0^-),(e_1,m_1^-)\}$ and $(-1)^i\sigma_{\cR_\ab}(i) = (m_i^--m_i^+)v_i -c_i u_i$.
\begin{enumerate}[(a)]
\item Proposition \ref{prop:index} still holds and $c_\tau(Z)=c_0+c_1$.
\item Lemma \ref{lem:apos} holds after replacing $a$ with $\tilde a$ in the statement.
\item Proposition \ref{prop:indecomposable} now asserts: $\alpha'$ and $\beta'$ do not share orbits; $C'$ has at most one end at covers of $e_0$ or $e_1$; $g(C')=0$; $\cR_\ab = \cT_1 \cR' \cT_2$ where $\cR'$ is indecomposable ($\cR'$ may differ from $\cR_{\alpha',\beta'}$) and $\cT_i$ are trivial; and $\cM_1(\alpha,\beta) \isom \cM_1(\alpha',\beta')$.
\item We say $a:\bI \to (\ft^2)^*$ is \defn{$\cR$-adapted} if it admits an $\cR$-adapted extension $\tilde a$. Proposition \ref{prop:deform} holds with this definition.
\end{enumerate}
\end{prop}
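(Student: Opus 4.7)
The plan is to treat each part as an adaptation of its ``open'' counterpart (on $Y^o$) plus an endpoint correction coming from the collapsed circles $e_0,e_1$. Throughout, the new phenomena are localised near $\{0\}\times T^2$ and $\{1\}\times T^2$; outside a neighbourhood of these, the arguments from \S\ref{section:main} apply verbatim with $\tilde a$ in place of $a$.

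\textbf{Part (a).} I would mimic the proof of Proposition \ref{prop:index}. The identifications $\sum_x CZ_{\cR_\ab}(x) = CZ^I_\tau(\alpha,\beta)$ and the interior contribution to $Q_\tau(Z)$ are unchanged, so the only task is to compute $c_\tau(Z)$ and the endpoint contribution to $Q_\tau(Z)$. Over $Y^o$ the section $\bar Q$ trivialises $\xi$, while over $e_i$ the trivialisation $\tau$ is given by $v_i$; near $\{i\}\times T^2$ the two differ by the rotation $v_a(x)^\vee\mapsto v_i+\text{(rotation by }\phi_i)u_i$. Extending $\bar Q$ to a section over a surface $S$ representing $Z$ and comparing it with a $v_i$-constant section near the ends at $e_i$ produces exactly $c_i$ signed zeros at each end, because the $v_i$-component of the boundary class $(-1)^i\sigma_{\cR_\ab}(i) = (m_i^- - m_i^+)v_i - c_i u_i$ is absorbed by the partition (and yields the $CZ_\tau(e_i^m)$ terms), while its $u_i$-component must be bounded off by $c_i$ instances of the $u_i$-orbit being collapsed. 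Summing over $i$ gives $c_\tau(Z) = c_0 + c_1$, and the same bookkeeping for $Q_\tau(Z)$ shows the endpoint contributions vanish (they are already recorded in the $\cP_{(e_i,m_i^\pm)}$ edges on $V_i$). The index formula $I(\cR_\ab) = I(\alpha,\beta,Z)$ then follows as before.

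\textbf{Part (b).} On $Y^o$ the original proof of Lemma \ref{lem:apos} applies word for word, using \eqref{eq:pos}. For $x \in V_i$ one instead invokes the construction of $\cP_{(e_i,m_i^\pm)}$: since this path encodes $\Lambda^+_{\phi_i}(m_i^\pm)$ and since by design $\tilde a'|_{V_i}$ sweeps through the same angular sector, every $\sigma_{\cR_\ab}(x)$ for $x\in V_i$ is a non-negative integer combination of $u_i$ and $w_{i,n}$'s; combined with Lemma \ref{lem:strata}, this gives $\tilde a'(x)^\vee \times \sigma_{\cR_\ab}(x) \ge 0$ with equality forcing the slice to be empty on the appropriate side. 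The second assertion transfers because a $J$-holomorphic curve with no end in $V_i$ still satisfies \eqref{eq:pos} for $\tilde a$.

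\textbf{Part (c).} The Fredholm/ECH index inequality \eqref{eq:indineq} and the calculation \eqref{eq:indecomp} go through unchanged with $\cR_\ab$ replaced by $\cR'$. Two new things must be ruled out. First, $\alpha'$ and $\beta'$ cannot both contain a cover of $e_i$: if they did, the partition condition would force $p^+_{e_i}(m^+) = p^+_{\phi_i}(m^+)$ and $p^-_{e_i}(m^-) = p^+_{-\phi_i}(m^-)$ to be ``compatible'' at a shared orbit, and since $e_i$ is elliptic with irrational (generically) $\phi_i$ these partitions share no common multiplicity, producing a positive $CZ^I$-contribution that violates $\ind(C')=1$. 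Second, $C'$ has at most one end at covers of each $e_i$: this is the same argument as the local-factor elimination in Proposition \ref{prop:indecomposable}, adapted using the trivialisation over $e_i$ and the Hofer--Wysocki--Zehnder decay condition; the would-be extra end forces a winding-number contradiction against $\lceil CZ_\tau(e_i^m)/2\rceil$. The remaining assertions ($g(C')=0$, the $\cT_1\cR'\cT_2$ decomposition, and the bijection $\cM_1(\ab)\isom \cM_1(\alpha',\beta')$) follow as in Proposition \ref{prop:indecomposable}, with the caveat that $\cR'$ need not equal $\cR_{\alpha',\beta'}$ because trivial cylinders on $V_i$ do not appear as edges in $\cR_{\alpha',\beta'}$.

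\textbf{Part (d).} The key compatibility needed is Lemma \ref{lem:lambdaconnected}: I would show that the space of $\cR$-adapted $a$ on $\bI$ (in the sense stated) is path connected by first extending each endpoint $a_0,a_1$ to $\cR$-adapted $\tilde a_r$, then applying Lemma \ref{lem:lambdaconnected} to $\tilde a_0,\tilde a_1$ while keeping the endpoint directions $a(i)^\vee \sim (-1)^i u_i$ fixed (this is possible because the $u_i$ are fixed primitive vectors and the path of rescalings/reparametrisations used there can be arranged to preserve this boundary condition). With this, the rest of the proof of Proposition \ref{prop:deform} transfers: action control gives $I(dC')\ge 0$ for covers near $U_x$, and Gromov compactness is maintained because the only new possible limit is a curve developing an end at $e_i$, which is ruled out by the assumption $m\phi_i\notin\bZ$ for $0<m<N$ (no Morse--Bott birth/death of orbits at $e_i$ within the action window).

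\textbf{Main obstacle.} The hardest point is part (a): correctly identifying $c_\tau(Z) = c_0 + c_1$ requires careful bookkeeping of how the $Y^o$-trivialisation $\Span\{\partial_x,\bar Q\}$ degenerates at $\{i\}\times T^2$ relative to the $v_i$-trivialisation over $e_i$, and why this is captured by the $u_i$-coefficient in the decomposition $(-1)^i\sigma_{\cR_\ab}(i) = (m_i^--m_i^+)v_i - c_i u_i$ rather than the $v_i$-coefficient. A secondary obstacle is verifying in part (c) that shared $e_i$-orbits are genuinely forbidden; the cleanest route is to compare the two partitions $p^+_{\phi_i}(m^+)$ and $p^+_{-\phi_i}(m^-)$ directly rather than going through an index computation.
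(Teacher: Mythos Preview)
Your proposal has the right overall architecture, but Part (c) contains a genuine gap. Your argument that $\alpha'$ and $\beta'$ cannot both contain $e_i$ --- via ``incompatibility'' of the partitions $p^+_{\phi_i}(m^+)$ and $p^+_{-\phi_i}(m^-)$ --- does not work: these partitions concern different total multiplicities $m_i^+$ and $m_i^-$, there is no shared-orbit constraint linking them, and nothing in the partition condition alone prevents $C'$ from having both positive and negative ends at covers of $e_i$. The paper proceeds instead by an explicit Fredholm index computation. Assuming $\sigma_{\cR_\ab}(0)\neq 0$ (which forces $\sigma_{\cR_\ab}(1)=0$ by Lemma~\ref{lem:strata}) and writing $p^\pm$ for the number of positive/negative ends of $C$ at covers of $e_0$, one obtains
\[
1 = \ind(C) = 2\bigl(g(C)-1 + d_0 + p^+ + \max\{0,p^--1\}\bigr) + \sum_{\text{ends in }Y^o}(\pm CZ(\rho_j^\pm)+1).
\]
If both $p^\pm>0$ this forces $p^+=p^-=1$ and $d_0=0$; then the partition fact that $p^-_{\phi_0}(m_0^-)=(m_0^-)$ implies $1\in p^+_{\phi_0}(m_0^-)$ produces an edge of $\cP_\beta$ at the point $x_0$ with $v_a(x_0)=v_0-\floor{\phi_0}u_0$, and combining this with $d_0=0$ contradicts $\tilde a$-positivity at $x=0$. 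This is how one actually gets $p^+p^-=0$, and the same identity then yields $d_0+p^+=1$, $p^-\le 1$ and $g(C)=0$ directly --- no HWZ winding argument is needed here.

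Part (a) is also too loose. The endpoint contribution to $Q_\tau(Z)$ does not ``vanish'': it equals $c_i(m_i^++m_i^-)$. The paper constructs an explicit surface $S$ with $|c_i|$ embedded $u_i$-invariant disks over each endpoint, computes $c_\tau$ there as the zero count of the section $\zeta=x(1-x)\partial_x$, and then matches the analytic endpoint contribution $c_i + c_i(m_i^++m_i^-) + CZ^I((e_i,m_i^+))-CZ^I((e_i,m_i^-))$ to $\sum_{x\in V_i}I_{\cR_\ab}(x)$ via Pick's theorem applied to the areas under $\Lambda^+_{\phi_i}(m_i^\pm)$. Your comparison of trivialisations gestures at the right $c_\tau$ but does not establish the $Q_\tau$ and $CZ$ match.

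In Part (d), the issue is subtler than avoiding Morse--Bott degeneration at $e_i$: one must ensure the rotation angle $\phi_0^r$ of $e_0$ never crosses a value with $m\phi_0^r\in\bZ$ for $m\le\max\{m_0^+,m_0^-\}$ along the homotopy, since otherwise the associated region $\cR_{\alpha_r,\beta_r}$ itself would jump. The paper arranges this by an extra smoothness constraint on the rescaling $\bar b$ from Lemma~\ref{lem:lambdaconnected} near the last support point in $V_0$.
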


\begin{proof}
(a) Let $\pi$ and $\psi$ be as in the original proof. Define $G^o$ by \eqref{eq:graph} for $\alpha^o$ and $\beta^o$,
\[  G_0 := \{(s, \epsilon(1-\abs{s}))\}_{s \in [-1,1]}, \quad G_1 := \{(s, 1- \epsilon(1-\abs{s}))\}_{s \in [-1,1]}, \]
and let $V$ be the set of multivalent vertices of $G_0 \cup G^o \cup G_1$. Since $H_2(Y) = 0$, we compute $I(\ab,Z)$ on a smooth surface $S$ as in \S \ref{section:ech} subject to: (i) $\pi(S) \subset G_0 \cup G^o \cup G_1 \cup B_{\epsilon/2}(V)$; (ii) for each component $E$ of $G \setminus B_{\epsilon/2}(V)$, $\pi_S^{-1}(E)$ consists of $\abs{c_i}$ disjoint embedded $u_i$-invariant disks if $(i,0) \in E$ for $i=0,1$ and as before, otherwise. We can construct such an $S$ by gluing these pieces as before.

We are ready to compute $I(\ab,[S])$. It suffices to compare $\sum_{x \in V_i} I_{\cR_\ab}(x)$ to
\[  I_i:=\#(\zeta^{-1}(0) \cap S_i) + \#(S_i \cap S'_i) + CZ^I((e_i,m_i^+)) - CZ^I((e_i,m_i^-)) \]
where $\zeta :=x(1-x)\partial_x \in \Gamma(\xi)$ and $S'_i := (\psi\times \id_Y)(S_i)$. The first two terms are equal to $c_i$ and $c_i(m_i^++m_i^-)$, respectively, while $CZ^I((e_i,m_i^\pm)) - \floor{m_i^\pm\phi_i} - m(\cP_{(e_i,m_i^\pm)})$ is twice the area $A_i^\pm$ under the graph of $\Lambda_{\phi_i}^+(m_i^\pm)$ by Pick's theorem. Using these and $d_i = c_i+\floor{m_i^+\phi_i}-\floor{m_i^-\phi_i}$, we get
\[ I_i = (2A_i^+-2A_i^- + c_i(m_i^++m_i^-)) + (m(\cP_{(e_i,m_i^+)}) - m(\cP_{(e_i,m_i^-)}) + d_i). \]
The first summand equals $\sum_{x \in V_i} Q_{\cR_\ab}(x)$ and the second equals $\sum_{x \in V_i} CZ_{\cR_\ab}(x)$.

\vspace{.1in}\noindent(b) Argue as before on $\bI$ and use Lemma \ref{lem:strata} on $V_i'$.

\vspace{.1in}\noindent(c) By Lemma \ref{lem:apos} and by symmetry, it suffices to assume $\sigma_{\cR_\ab}(0)\neq 0$. By Lemma \ref{lem:strata}, $\sigma_{\cR_\ab}(1)=0$. If $C$ is irreducible,
\[ \sum CZ_\tau(e_0^{n_i^+}) = \sum (2\floor{n_i^+\phi_0}+1) = p^+ + 2\floor{m_0^+\phi_0} \]
and
\[ \sum CZ_\tau(e_0^{n_i^-}) = \sum (2\ceil{n_i^-\phi_-}-1) = -p^- + 2\ceil{m_0^-\phi_0} \]
where each sum is over the entries of the partition $(n_1^\pm, \cdots, n_{p^\pm}^\pm)$ of $m_0^\pm$ given by $C$. Substituting these into \eqref{eq:fredind} and using $d_0 = c_0 +\floor{m_0^+\phi_0}-\floor{m_0^-\phi_0}$,
\[  1 =\ind(C) =  2(g(C)-1 + d_0 + p^+ + \max\{0,p^--1\}) + \sum (\pm CZ(\rho_j^\pm) + 1)\]
where the sum is over positive/negative ends of $C$ at $\rho_j^\pm$ in $Y^o$. Thus, if $p^\pm > 0$, $p^+=p^-=1$ and $d_0=0$. By a simple fact for special partitions (\S \ref{section:ech}), $p_{\phi_0}^- = (m_0^-)$ implies $1 \in p_{\phi_0}^+(m_0^-)$, i.e. $m_{\cP_\beta}(x_0) >0$ where $v_a(x_0)= v_0-\floor{\phi_0}u_0$. Since $\cR_\ab$ is positive with $d_0=0$, $m_{\cP_\alpha}(x_0) > 0$ and by index, $m_{\cP_\alpha}|_{V_0} = \delta_{x_0}$. Then, by convexity of $\tilde a$ on $V_0$, $\cR_\ab$ violates $\tilde a$-positivity at $0$. Therefore, $p^+p^-=0$. Moreover, $d_0 + p^+ > 0$ by $\tilde a$-positivity, so $d_0+p^+ = 1$, $p^-\in \{0,1\}$, $g(C)=0$ and $\sum_{x\in \bI}m^l_{\cR_\ab}(x)= 1$. By the last condition, $\cR_\ab$ cannot decompose at $x \in \bI$ and $\alpha$ and $\beta$ do not share orbits in $Y^o$ either. The rest follows easily from this and Lemma \ref{lem:strata}.

\vspace{.1in}\noindent(d) Assume $\sigma_\cR(1)=0$ as above. Let $\{\tilde a_r\}_{r\in [0,1]}$ be a path given by Lemma \ref{lem:lambdaconnected} except: if $m_0^+ + m_0^- > 0$ (so $\supp m_\cR|_{V_0}=\{x_1<\cdots < x_{n_0}\} \neq \emptyset$ and $x_{n_0} > \tilde x_0$), we additionally require $\bar b$ to be smooth on $[x_{n_0},z_{n_0})$ for $z_{n_0} > 0$, which is possible since $a(0) \times v_\cR(x_{n_0}) > 0$. Let $\blambda_r$ be the contact form on $Y$ obtained for each $a_r := \tilde a_r |_\bI$ as above and $\phi_0^r$ the return angle of the orbit of $\blambda_r$ at $x=0$. 

Since $\Lambda_\phi^+(m) = \Lambda_{\phi'}^+(m)$ implies $\Lambda_\phi^+(m')=\Lambda_{\phi'}^+(m')$ for all $m' \le m$, we may assume that $\alpha$ and $\beta$ do not share orbits by (c) and that, for $0 < m \le \max\{m_0^+,m_0^-\}$, $m\phi_0^r$ never crosses an integer during the deformation by the above requirement. This guarantees orbit sets $\alpha_r$ and $\beta_r$ of $\lambda_r$ with $\cR_{\alpha_r,\beta_r} = \cR$ as well as non-degeneracy of the orbits involved. We can carry out the rest of the original proof with minor adjustments.
\end{proof}

\begin{proof}[Proof of Theorem \ref{thm:lens}]
By Lemma \ref{lem:strata}, Proposition \ref{prop:lens} and \S\ref{section:main}, it suffices to show $\<\partial\alpha,\beta\> =1$ when $I(\cR_\ab) = 1$, $\cR_\ab$ is indecomposable, $\sigma_{\cR_\ab}(0) \neq 0$ and $\alpha$ and $\beta$ do not share $e_1$. We can also reduce $\sum_{x \in \bI}m_{\cR_\ab}(x)$ to 1 by induction as in the proof of Proposition \ref{prop:suff} and assume $m_{\cP_\beta}|_\bI \equiv 0$ by duality as in Proposition \ref{prop:base}(iii). Hence, $C \in \cM_1(\ab)$ has one positive end at $\check h_{x^o}$ for $x^o \in int(\bI)$ and the only other end is: (i) none, (ii) a negative end at $e_0^{m_0^-}$, or (iii) a positive end at $e_0^{m_0^+}$. Assume $u_0=(1,0)$ and $1<\phi_0<2$ by $SL_2(\bZ)$-symmetry.

If $\abs{m_0^\pm} \le 1$, Proposition \ref{prop:lens}(d) allows us to deform $\blambda$ to the pullback of $\alpha$ under a suitable identification of $Y' := \pi^{-1}_\bI([0,1-\epsilon))$ (diffeomorphic to $D^2 \times S^1$) with a subset of $(S^2 \times S^1,\alpha)$ in $I_C=\aleph_C+1$ case of \cite[Theorem A.1]{taubes2}. In each of our three cases, a Morse-Bott argument \cite{mb} gives a unique member of $\cM_1(\ab)/\bR$ from an $(\bR \times S^1)$-family of $J$-holomorphic curves there: use (a1) for case (i); (a2) with $p=1,p'=2$ for (ii); and (a3) with $p=p'=1$ for (iii). 

Otherwise, define $\pi:\bR \times Y' \to S^1$ and $q:\bR \times Y' \to \bR \times Y'$ by $\pi(s,x,t_1,t_2)=t_2$ and $q(s,x,t_1,t_2)=(s,x,t_1,mt_2)$. A $J$-holomorphic cylinder $u:\bR \times S^1 \to \bR \times Y'$ with $\deg(\pi\circ u) = \pm m$ lifts (in $m$ different ways) to a $\tilde J$-holomorphic cylinder $\tilde u$ where $q_*\tilde J = J q_*$. Here, $\tilde J$ is $\tilde\lambda$-admissible for a perturbation of $\blambda$ using $f_{x^o}$ with $2m$ critical points but we can pick one $\tilde u$ (with an end at $\brho_{x^o}(\theta_0)$ for a local minimum $\theta_0$) and deform away any other local minimum $\theta$ of $f_{x^o}$ since $\tilde u$ stays away from $\bR \times \brho_{x^o}(\theta)$. By $T^2$-action and Proposition \ref{prop:lens}(d), we reduce the above case.
\end{proof}

\subsection{Toric contact $S^1 \times S^2$}

The discussions from \S \ref{section:lens} work here except, to account for $u_0 = \pm u_1$:
\begin{definition}
The region $\cR_{\ab,d}$ \defn{associated} to $\ab$ and $d \in \bZ$ is the pair $(\cP_0^+\cP_\alpha\cP_1^+,\cP_0^-\cP_\beta\cP_1^-)$ as in Definition \ref{def:lensassoc} except we impose $d_0 = d$. (Note $d_0$ and $d_1$ are not uniquely determined otherwise.)
\end{definition}

\begin{thm}\label{thm:lens0q}
Define $(\lambda,J)$ on $Y$ as in \S \ref{section:lens}. For admissible orbit sets $\alpha$ and $\beta$ of $\lambda$, $\<\partial\alpha,\beta\> =1$ if and only if there exists a unique $d$ such that $\cR_{\ab,d} = \cT_1\cR' \cT_2$ where $\cT_i$ are trivial and $\cR'$ is non-local, indecomposable, $\tilde a$-positive, minimally positive and almost minimally decorated.
\end{thm}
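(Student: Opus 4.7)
The plan is to mimic the proof of Theorem \ref{thm:lens}, tracking the homology parameter $d$ throughout. The only structural novelty of $S^1 \times S^2$ relative to $L(p,q)$ with $p \neq 0$ is that $u_0 = \pm u_1$, so $H_1(Y) = \bZ$ is infinite and $H_2(Y,\alpha,\beta)$ is a torsor over $H_2(Y) = \bZ$. This is exactly the freedom that $d$ records: two choices $d$ and $d+1$ differ by gluing on a trivial $S^2$-class, which corresponds to adding one unit each to $d_0^+$ and $d_1^-$ (or $d_0^-$ and $d_1^+$), shifting $Z$ by the generator of $H_2(Y)$.

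First I would verify that every ingredient of \S\ref{section:lens} carries over class by class. Most importantly, Proposition \ref{prop:lens}(a) generalizes to $I(\cR_{\ab,d}) = I(\alpha,\beta,Z(d))$ for the $Z(d) \in H_2(Y,\alpha,\beta)$ determined by $d$; the proof is the same surface-gluing computation, since the new boundary pieces contribute $|c_i|$ disks at each end just as before, and different $d$'s correspond to taking different numbers of full $u_i$-invariant disks at the ends. The dependence of $I$ on $d$ then matches \eqref{eq:indamb} applied to $c_1(\xi)$ (which is nontrivial here). Parts (b)--(d) of Proposition \ref{prop:lens} similarly go through: any $\cC \in \cM(\ab)$ has a well-defined $d(\cC) \in \bZ$ (read off from the algebraic intersection of $\cC$ with a fibre $\{pt\} \times T^2$), the $\tilde a$-positivity of $\cR_{\ab,d(\cC)}$ follows from Lemmas \ref{lem:apos} and \ref{lem:strata} applied on $\bI$ and $V_i'$ separately, and the indecomposability reduction of (c) is unchanged because it concerns curves and is local in nature.

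With these adaptations, the moduli space decomposes as
\[ \cM_1(\alpha,\beta) = \bigsqcup_{d \in \bZ} \cM_1(\alpha,\beta,d), \qquad \cM_1(\alpha,\beta,d) := \{\cC \in \cM_1(\alpha,\beta): d(\cC) = d\}, \]
and I would apply the arguments of \S\ref{section:main} and \S\ref{section:lens}---the classification of $\cR$ with $\bar I(\cR) \in \{-1,0,1\}$, the Morse-Bott exclusion of Proposition \ref{prop:diff-mb}, the deformation invariance of Proposition \ref{prop:deform}, the base cases of Proposition \ref{prop:base}, and the induction of Proposition \ref{prop:suff}---inside each fixed class $d$. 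The analog of Theorem \ref{thm:lens} then gives $\#(\cM_1(\ab,d)/\bR) = 1$ precisely when $\cR_{\ab,d}$ has the required form $\cT_1 \cR' \cT_2$, and vanishes otherwise. Summing over $d$ modulo 2 yields the theorem, provided at most one $d$ produces a valid decomposition.

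The main obstacle is therefore establishing this uniqueness of $d$. The argument should go as follows: suppose $\cR_{\ab,d}$ and $\cR_{\ab,d'}$ with $d \neq d'$ both admit decompositions $\cT_i \cR^{(i)} \cT_i'$ as in the statement. The difference $\cR_{\ab,d'} - \cR_{\ab,d}$ is a full $S^2$-class, corresponding combinatorially to adjoining a pair of opposite $u_0$-edges at $\tilde x_0$ (or $\tilde x_1$). Using \eqref{eq:indamb}, $I(\cR_{\ab,d}) - I(\cR_{\ab,d'}) = \<c_1(\xi),[S^2]\>(d-d')$ which is an even nonzero integer (in particular, nonzero since $c_1(\xi)$ evaluates nontrivially on the $S^2$-generator), so the two indices cannot both equal $1$. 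Combined with $I(\cR') = 1$ from the non-local indecomposable piece of each decomposition, this forces $d = d'$. One then must carefully rule out the one borderline case analogous to Remark \ref{rem:uniquelift} (where two lifts coexist because of a convex corner at $x_0$ and $x_0+1$); here it cannot arise because the two candidate edges at $\tilde x_i$ have $c=1$ by construction, while admissibility of $\alpha,\beta$ prevents the needed hyperbolic multiplicities at both endpoints. Once uniqueness is established, the $\bZ/2$-count $\<\partial\alpha,\beta\>$ equals the indicator of the unique $d$, completing the proof.
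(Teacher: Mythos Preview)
Your overall reduction---decompose $\cM_1(\alpha,\beta)$ by the homology class $d$ and then rerun the \S\ref{section:lens} arguments inside each $\cM_1(\alpha,\beta,d)$---is exactly what the paper does, and this part is fine.

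The gap is in your treatment of non-unique $d$. You attempt to prove that at most one $d$ can yield an index-$1$ region, via the index ambiguity formula \eqref{eq:indamb}. This fails for two reasons. First, your formula is incomplete: the difference $I(\cR_{\ab,d}) - I(\cR_{\ab,d'})$ is $\langle c_1(\xi) + 2PD(\Gamma), [S^2]\rangle (d-d')$, not $\langle c_1(\xi),[S^2]\rangle(d-d')$; you dropped the $2PD(\Gamma)$ term. Second, and more seriously, this quantity \emph{can} vanish: the paper's Remark~\ref{rem:lens0q} describes explicitly a situation in which both $\cR_{\ab,0}$ and $\cR_{\ab,1}$ satisfy all the hypotheses of the theorem (with $\alpha' = \check h_{x^o}$, $\beta'=\emptyset$, and $\gamma$ supported at the unique $x^o$ with $v_a(x^o)\sim u_0$, so $\Gamma=0$ and the pairing is zero). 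Your attempted exclusion of the ``borderline case'' via admissibility does not apply here---there is only one hyperbolic orbit in play and admissibility is not violated.

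The paper's resolution is not to prove uniqueness of $d$ but to observe that when two values of $d$ both contribute, each contributes $1$ to $\#(\cM_1(\alpha,\beta)/\bR)$ and they cancel mod $2$. This is consistent with the theorem as stated: the criterion is that there exists a \emph{unique} $d$ with the stated property, so in the non-unique case the theorem correctly predicts $\langle\partial\alpha,\beta\rangle = 0$. You should replace your index-rigidity argument with this cancellation observation.
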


\begin{remark}\label{rem:lens0q}(Uniqueness of $d$)
If $\cR_{\ab,d}$ is minimally positive, $d=0$ or $1$. Suppose both $\cR_{\ab,0}$ and $\cR_{\ab,1}$ satisfy the criteria in Theorem \ref{thm:lens0q} and write $\alpha = \alpha' \cup \gamma$ and $\beta = \beta' \cup \gamma$. By an analogue of Lemma \ref{lem:strata}, $int(\supp\cR_{\ab,1}) = (\tilde x_0,x^o)$ and $int(\supp\cR_{\ab,0})=(x^o,\tilde x_1)$ where $m^l_{\cR_{\alpha',\beta',0}}= \delta_{x_0}$ and as in Remark \ref{rem:uniquelift}, $\supp(m_{\cP_\alpha\cup\cP_\beta}) = \{x^o\}$. Therefore, $(a')^\vee \times u_0$ vanishes precisely at $x^o$, $a'$ is convex at $x^o$ (by $a \times a' >0$), $\alpha' = \check h_{x^o}, \beta' = \emptyset$ and $\gamma$ consists of orbits at $x^o$.
\end{remark}

\begin{proof}[Proof of Theorem \ref{thm:lens0q}]
By Lemma \ref{lem:strata}, we may re-use arguments from \S \ref{section:lens}. In case of non-unique $d$, the two non-zero contributions to $\<\partial\alpha,\beta\>$ cancel.
\end{proof}

\subsection{Map from $ECC(L(p,q))$ to $ECC(T^3)$}

Consider an orbital moment map $a_T:\bR/2\bZ \to (\ft^2)^*$ and suppose there are $\tilde x_0 \in (-1/2,0)$ and $\tilde x_1 \in (1,3/2)$ so that $a_L:=a_T|_\bI$ and $\tilde a_L:=a_T|_{[\tilde x_0,\tilde x_1]}$ satisfy the conditions of $a$ and $\tilde a$ in \S \ref{section:lens}. Suppose further that, for each $i=0,1$, $a_L(i) \times a_T'$ is positive on $\bI$ and negative on $(\tilde x_1,\tilde x_0+2)$. (In particular $p \neq 0$.) As in \S \ref{section:t3} and \S \ref{section:lens}, choose a good perturbation $\lambda_T$ of $\pi_{\bR/2\bZ}^* a_T$ on $(\bR/2\bZ) \times T^2$ and $\lambda_L$ of $\blambda_L$ on $L(p,q)$, as well as generic $\lambda_T$ and $\lambda_L$-admissible $J_T$ and $J_L$. Then, for any orbit set $\alpha$ of $\lambda_L$ with $[\alpha]=0$, $\cR_{\alpha,\emptyset}= (\cP^+,0)$ for a unique $\cP^+:[\tilde x_0,\tilde x_1] \to \cV$.
\begin{prop}
Define $\Phi:ECC(L(p,q),\lambda_L,J_L,0) \to ECC(T^3,\lambda_T,J_T,0)$ by $\cR_{\alpha,\emptyset} = (\cP_{\Phi(\alpha)},0)$. Then, $I(\alpha,\emptyset) = I(\Phi(\alpha),\emptyset)$ and $\partial_T \Phi =\Phi \partial_L$.
\end{prop}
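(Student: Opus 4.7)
The plan is to reduce both assertions to combinatorial comparisons via Propositions~\ref{prop:lens}(a) and~\ref{prop:t3}(a), combined with Theorems~\ref{thm:lens} and~\ref{thm:t3}. First, $\Phi(\alpha)$ is well defined because $\cP^+$ is $a_T$-compatible (since $a_T$ extends $\tilde a_L$ and $\cP^+$ is $\tilde a_L$-compatible). For the index equality, Proposition~\ref{prop:lens}(a) gives $I(\alpha,\emptyset)=I(\cR_{\alpha,\emptyset})$ computed combinatorially on $[\tilde x_0,\tilde x_1]$. On the $T^3$ side I would take offset $\sigma_0=0$ and use the fundamental domain $[\tilde x_0,\tilde x_0+2)$ of $\bR/2\bZ$, which contains $\supp(\cP^+)\subset[\tilde x_0,\tilde x_1]$. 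Then Proposition~\ref{prop:t3}(a) identifies $I(\Phi(\alpha),\emptyset)$ with the combinatorial index of the trivial extension of $\cP^+$ to $\bR$; both computations use the formula $\sum_x \bigl(Q_\cR(x)+CZ_\cR(x)\bigr)$ with identical slice classes $\sigma_\cR(x)=-\sum_{x'<x}m\,v(x')$ and identical elliptic/hyperbolic decorations, so they agree term by term.

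For the chain-map property I would split into two cases according to whether the target $\gamma$ lies in $\im\Phi$. In \emph{Case I}, $\gamma=\Phi(\beta)$ for admissible $\beta$ with $[\beta]=0$. Here I compare $\cR_{\alpha,\beta}$ on $[\tilde x_0,\tilde x_1]$ with the canonical lift of the offset region $(\cP_{\Phi(\alpha)},\cP_{\Phi(\beta)},\sigma)$ on $\bR/2\bZ$. Both $\cP_{\Phi(\alpha)}$ and $\cP_{\Phi(\beta)}$ carry elliptic edges of direction $u_i$ at $\tilde x_i$ (with multiplicities $d_i^{\alpha,+}$ and $d_i^{\beta,+}$), whereas the lens-space convention records only the difference $d_i=d_i^+-d_i^-$ on a single side; the discrepancy is exactly $\min(d_i^{\alpha,+},d_i^{\beta,+})$ trivial bigons stacked at each $\tilde x_i$. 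Such bigons contribute neither to the combinatorial index nor to the unique non-trivial indecomposable factor, and are absorbed into the $\cT_1,\cT_2$ of Theorems~\ref{thm:lens} and~\ref{thm:t3}. Uniqueness of the relevant lift (Remark~\ref{rem:uniquelift}) is automatic because the boundary edges at $\tilde x_i$ are elliptic convex on both sides. Hence $\langle\partial_L\alpha,\beta\rangle=\langle\partial_T\Phi(\alpha),\Phi(\beta)\rangle$ mod~$2$.

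In \emph{Case II}, $\gamma\notin\im\Phi$, and I would show $\langle\partial_T\Phi(\alpha),\gamma\rangle=0$ by ruling out a relevant lift of $(\cP_{\Phi(\alpha)},\cP_\gamma,\sigma)$. The failure of $\gamma\in\im\Phi$ can only occur in three ways: (a) $\cP_\gamma$ has support in $(\tilde x_1,\tilde x_0+2)\pmod{2\bZ}$; (b) $\cP_\gamma$ has a $\check h_{\tilde x_i}$ edge; or (c) the edges of $\cP_\gamma$ in $V_i$ do not form a $\Lambda_{\phi_i}^+(m)$ pattern. In (a), the hypothesis $a_L(i)\times a_T'<0$ on $(\tilde x_1,\tilde x_0+2)$, together with $\cP_{\Phi(\alpha)}\equiv 0$ there, forces $(a_T')^\vee\times\sigma_{\cR_\bR}$ to change sign and so violates $a_T$-positivity. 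Cases (b) and (c) are excluded by the analogue of Lemma~\ref{lem:strata}: at a convex boundary point of $\supp\cR_\bR$ the edges are pinned to the unique $\Lambda_{\phi_i}^+$ pattern dictated by minimal positivity and almost-minimal decoration.

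The main obstacle is Case~II(a): converting the geometric sign condition on $a_T'$ over $(\tilde x_1,\tilde x_0+2)$ into a combinatorial statement that forbids $\cP_\gamma$ from entering the collapsed arc. This is the combinatorial reflection of the geometric fact that $J$-holomorphic currents emanating from $\Phi(\alpha)$ remain confined to $[\tilde x_0,\tilde x_1]\times T^2$. Once this confinement is in hand, the remainder is routine bookkeeping of trivial bigons at $\tilde x_i$ and direct invocation of the combinatorial differential theorems already proved.
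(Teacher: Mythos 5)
The overall plan tracks the paper's proof fairly closely: reduce the index equality to a combinatorial comparison via Propositions~\ref{prop:lens}(a) and~\ref{prop:t3}(a), then handle the chain-map property by showing the $T^3$ differential of $\Phi(\alpha)$ is confined to $\im\Phi$ and matches the $L(p,q)$ differential there. The index comparison and the bookkeeping of trivial bigons at $\tilde x_i$ in your Case~I are both sound, and your accounting $\min(d_i^{\alpha,+},d_i^{\beta,+})$ for the discrepancy between the lens-space region $\cR_{\alpha,\beta}$ and the pair $(\cP_{\Phi(\alpha)},\cP_{\Phi(\beta)})$ is correct.

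However, Case~II(a), which you correctly identify as the crux, has a genuine gap and your proposed mechanism is not the right one. You claim that support of $\cP_\gamma$ in the collapsed arc ``forces $(a_T')^\vee\times\sigma_{\cR_\bR}$ to change sign and so violates $a_T$-positivity.'' This is not true pointwise: the slice class can perfectly well be nonzero over part of $(\tilde x_1,\tilde x_0+2)$ while $(a_T')^\vee\times\sigma$ remains nonnegative there, because $\sigma$ jumps at the edges of $\cP_\gamma$ and can track the rotation of $a_T'$. The actual obstruction is global and angular: by Lemma~\ref{lem:nokink}, if one path of an $a$-positive offset region is silent on an interval where the slice class is nonzero, then the angular turning of $a'$ over that interval is at most $\pi$; in particular, a region that ``does not lift'' must have $m(\cP_{\Phi(\alpha)})>2$. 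The paper's proof combines this with the constraint $I(\cR)=1$ to show that $\supp\cR$ cannot reach into the collapsed arc without forcing either $\cP_{\Phi(\alpha)}$ to be silent on too long an interval (contradicting Lemma~\ref{lem:nokink}) or the index to exceed $1$. Without invoking Lemma~\ref{lem:nokink} and the index bound, your confinement step does not go through. Your appeal to ``the analogue of Lemma~\ref{lem:strata}'' for cases~(b) and~(c) is also informal, but that is closer to being routine; the paper handles those points via the explicit decomposition $\cP_\beta=\cP_{\tilde x_0}\cP_0\cP_{\beta^o}\cP_1\cP_{\tilde x_1}$ and the inequalities $\phi_im_i-1<n_i\le\phi_im_i$ coming from convexity of $\tilde a_L|_{V_i}$ and $I(\cR)=1$, which pin down the $\Lambda_{\phi_i}^+$ pattern. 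Filling the Lemma~\ref{lem:nokink} step would complete your argument.
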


\begin{proof}
Let $\cR = \cR_{\Phi(\alpha),\beta,\sigma}$ for any $\beta$ and $\sigma$ with $\#(\cM_1^{J_T}(\Phi(\alpha),\beta,\sigma)/\bR) =1$. Suppose $\sigma_\cR(x) \neq 0$ for $x \in (\tilde x_1,\tilde x_0+2)$. By positivity of $\cR$ at $\tilde x_0+2$ and the condition on $a'_T$ on $(\tilde x_1, \tilde x_0 + 2)$, $\supp(\cR)$ contains $[\tilde x_1,x]$, and similarly $[x,\tilde x_0+2]$. Moreover, since $I(\cR)=1$, $\supp\cR$ contains $V=[1,\tilde x_0+2]$ or $[\tilde x_1,2]$. Then $\cP_\alpha|_V \equiv 0$ contradicting Lemma \ref{lem:nokink}. Hence, $\supp\cR \subset [\tilde x_0,\tilde x_1]$ and $\cP_\beta = \cP_{\tilde x_0}\cP_0\cP_{\beta^o}\cP_1\cP_{\tilde x_1}$ with $\supp\cP_{\tilde x_i}=\{\tilde x_i\}$, $\supp\cP_i \subset int(V_i)$ and $\supp \cP_{\beta^o} \subset \bI$. If $[\cP_i] = m_iv_i - n_iu_i$, then $n_i \le \phi_im_i$ by convexity of $\tilde a_L|_{V_i}$ and $n_i > \phi_im_i-1$ by $I(\cR)=1$. Thus, by Definition \ref{def:lensassoc}, $[\cP_{\Phi(\beta')}]=[\cP_\beta]$ and $(\cP_{\Phi(\beta')},\cP_\beta,0)$ is positive for the orbit set $\beta' := \{(e_0,m_0),(e_1,m_1)\} \cup \beta^o$ of $\lambda_L$. Since $(\cP_{\Phi(\alpha)},\cP_{\Phi(\beta')},\sigma)$ is positive by Lemma \ref{lem:strata} and $\bar I(\cR) = 0$, $\beta = \Phi(\beta')$ by Lemma \ref{lem:complexity}. Finally, if $m(\cP_{\Phi(\alpha)})=2$ and $\cP_\beta = 0$, then $\cP_{\Phi(\alpha)}|_\bI \equiv 0$, contradicting $I(\cR) =1$. Hence, $(\cP_{\Phi(\alpha)},\cP_\beta)$ has a unique relevant lift, namely $\cR_{\alpha\beta'}$, and the result follows from Theorem \ref{thm:t3} and Theorem \ref{thm:lens}.
\end{proof}

If $a_T(0) \sim (1,0), a_T(1) \sim (0,1)$ and $a_T$ is convex everywhere, we get:
\begin{cor}\cite[Conjecture A.3]{beyond}
If $(S^3, \blambda_L)$ is the boundary of a convex toric domain, $\Phi$ as above is a chain map.
\end{cor}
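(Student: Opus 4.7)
The plan is to deduce the corollary directly from the preceding proposition by exhibiting an extension $a_T:\bR/2\bZ \to (\ft^2)^*$ of $a_L$ that meets all of its hypotheses: convexity of $a_T$ everywhere, $a_T(0) \sim (1,0)$, $a_T(1) \sim (0,1)$, condition (i) of \S\ref{section:lens} for the restriction $\tilde a_L := a_T|_{[\tilde x_0, \tilde x_1]}$, and $a_L(i) \times a_T'$ positive on $\bI$ and negative on $(\tilde x_1, \tilde x_0 + 2)$. Once such an $a_T$ is produced, the chain map property of $\Phi$ is immediate.

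Since $(S^3,\blambda_L)$ bounds a convex toric domain, $a_L$ is already a convex arc in the first quadrant of $(\ft^2)^*$ with $a_L(0)$ on the positive $t_1$-axis and $a_L(1)$ on the positive $t_2$-axis. I would construct $a_T$ by closing this arc into a smooth convex Jordan curve winding once counterclockwise around the origin, routing the complementary arc through the half-plane on the far side (passing below the $t_1$-axis and to the left of the $t_2$-axis). The parameters $\tilde x_0 < 0 < 1 < \tilde x_1$ are chosen so that $a_T'(\tilde x_i) \sim a_L(i)$: this is possible because convexity forces $a_T'$ to rotate monotonically counterclockwise, so it passes through the direction of $a_L(0)$ slightly before $x=0$ and through $a_L(1)$ slightly after $x=1$. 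Conditions (ii) and (iii) of \S\ref{section:lens} are then automatic, and (i) (the non-annihilation of $nv_i - n'u_i$ with $|n|<N$) is secured by a generic $C^\infty$-small perturbation of the extension.

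To verify the cross-product inequalities, note that $a_L(0) \sim (1,0)$ makes $a_L(0) \times a_T'(x)$ equal to the $t_2$-component of $a_T'(x)$. This is positive on $\bI$ because the convex arc rises from the $t_1$-axis to the $t_2$-axis, and negative on the return arc because the tangent direction has rotated past horizontal and lies in the lower half-plane of directions until the curve returns to $a_L(0)$ from below. The case $a_L(1) \sim (0,1)$ is symmetric: one reads off the sign of the $t_1$-component of $a_T'$.

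The main (minor) obstacle is the bookkeeping of compatibly choosing the extension, the parameters $\tilde x_i$, the perturbation, and the return arc while simultaneously maintaining the contact condition $a_T \times a_T' > 0$ and the convexity condition $a_T' \times a_T'' > 0$. This is resolved by observing that any convex closed curve in $(\ft^2)^*$ enclosing the origin automatically satisfies $a_T \times a_T' > 0$, and all remaining conditions are open and generic, so a standard smoothing argument produces the required $a_T$.
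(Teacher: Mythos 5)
Your proposal is correct and takes essentially the same approach the paper does: the corollary is an immediate specialization of the preceding proposition, whose hypotheses one checks are fulfilled once $a_T$ is chosen convex everywhere with $a_T(0)\sim(1,0)$ and $a_T(1)\sim(0,1)$. The paper treats this as a one-liner (it simply states the extra conditions on $a_T$ and cites the proposition); you spell out the one step the paper leaves implicit, namely that such an $a_T$ exists for any given convex toric boundary $a_L$, by closing up the convex arc into a convex Jordan curve around the origin and checking the sign conditions on $a_L(i)\times a_T'$. That is the right verification, and your reading of the cross-product inequalities (second quadrant for $a_T'$ on $\bI$, fourth quadrant on $(\tilde x_1,\tilde x_0+2)$) is correct. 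One small imprecision: you write $a_T'(\tilde x_i)\sim a_L(i)$, but the paper's $\sim$ means positive multiple, and at $\tilde x_1$ the monotone counterclockwise rotation forces $a_T'(\tilde x_1)\sim -a_L(1)$ (angle $3\pi/2$, not $\pi/2$). What is actually required is only that $a_L(i)\times a_T'(\tilde x_i)=0$, so this does not affect the argument, but the phrasing ``passes through the direction of $a_L(1)$ slightly after $x=1$'' is not literally accurate.
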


\bibliographystyle{amsplain}
\bibliography{echcomb}

\providecommand{\bysame}{\leavevmode\hbox to3em{\hrulefill}\thinspace}
\providecommand{\MR}{\relax\ifhmode\unskip\space\fi MR }
\providecommand{\MRhref}[2]{%
  \href{http://www.ams.org/mathscinet-getitem?mr=#1}{#2}
}
\providecommand{\href}[2]{#2}
\begin{thebibliography}{10}

\bibitem{mb}
Fr{\'e}d{\'e}ric Bourgeois, \emph{A {M}orse-{B}ott approach to contact
  homology}, Symplectic and contact topology: interactions and perspectives
  ({T}oronto, {ON}/{M}ontreal, {QC}, 2001), Fields Inst. Commun., vol.~35,
  Amer. Math. Soc., Providence, RI, 2003, pp.~55--77. \MR{1969267
  (2004a:53109)}

\bibitem{cfghr}
Keon Choi, Daniel Cristofaro-Gardiner, David Frenkel, Michael Hutchings, and
  Vinicius G.~B. Ramos, \emph{Symplectic embeddings into four-dimensional
  concave toric domains}, 2013, arXiv:1310.6647.

\bibitem{cgh2}
Vincent Colin, Paolo Ghiggini, and Ko~Honda, \emph{Equivalence of {H}eegaard
  {F}loer homology and embedded contact homology via open book decompositions},
  Proc. Natl. Acad. Sci. USA \textbf{108} (2011), no.~20, 8100--8105.
  \MR{2806645 (2012f:53190)}

\bibitem{concconv}
Daniel Cristofaro-Gardiner, \emph{Symplectic embeddings from concave toric
  domains into convex ones}, 2014, arXiv:1409.1352.

\bibitem{hwz}
H.~Hofer, K.~Wysocki, and E.~Zehnder, \emph{Properties of pseudo-holomorphic
  curves in symplectisations. {II}. {E}mbedding controls and algebraic
  invariants}, Geom. Funct. Anal. \textbf{5} (1995), no.~2, 270--328.
  \MR{1334869}

\bibitem{ir}
Michael Hutchings, \emph{The embedded contact homology index revisited}, New
  perspectives and challenges in symplectic field theory, CRM Proc. Lecture
  Notes, vol.~49, Amer. Math. Soc., Providence, RI, 2009, pp.~263--297.
  \MR{2555941 (2010k:53152)}

\bibitem{qech}
\bysame, \emph{Quantitative embedded contact homology}, J. Differential Geom.
  \textbf{88} (2011), no.~2, 231--266. \MR{2838266}

\bibitem{beyond}
\bysame, \emph{Beyond {ECH} capacities}, 2014, arXiv:1409.1352.

\bibitem{bn}
\bysame, \emph{Contact and symplectic topology}, ch.~Lecture Notes on Embedded
  Contact Homology, pp.~389--484, Springer International Publishing, Cham,
  2014.

\bibitem{pfh}
Michael Hutchings and Michael Sullivan, \emph{The periodic {F}loer homology of
  a {D}ehn twist}, Algebr. Geom. Topol. \textbf{5} (2005), 301--354.
  \MR{2135555 (2005m:53169)}

\bibitem{t3}
\bysame, \emph{Rounding corners of polygons and the embedded contact homology
  of {$T^3$}}, Geom. Topol. \textbf{10} (2006), 169--266. \MR{2207793
  (2006k:53150)}

\bibitem{klt}
Cagatay Kutluhan, Yi-Jen Lee, and Clifford~Henry Taubes, \emph{{H}{F}={H}{M}
  {I}: {H}eegaard {F}loer homology and {S}eiberg-{W}itten {F}loer homology},
  2010, arxiv:1007.1979.

\bibitem{lee}
Yi-Jen Lee, \emph{Reidemeister torsion in {F}loer-{N}ovikov theory and counting
  pseudo-holomorphic tori. {I}}, J. Symplectic Geom. \textbf{3} (2005), no.~2,
  221--311. \MR{2199540 (2006k:53152)}

\bibitem{contactcut}
Eugene Lerman, \emph{Contact cuts}, Israel J. Math. \textbf{124} (2001),
  77--92. \MR{1856505}

\bibitem{lerman}
\bysame, \emph{Contact toric manifolds}, J. Symplectic Geom. \textbf{1} (2003),
  no.~4, 785--828. \MR{2039164}

\bibitem{sw}
Sucharit Sarkar and Jiajun Wang, \emph{An algorithm for computing some
  {H}eegaard {F}loer homologies}, Ann. of Math. (2) \textbf{171} (2010), no.~2,
  1213--1236. \MR{2630063 (2012f:57032)}

\bibitem{taubes2}
Clifford~Henry Taubes, \emph{A compendium of pseudoholomorphic beasts in {$\Bbb
  R\times (S^1\times S^2)$}}, Geom. Topol. \textbf{6} (2002), 657--814
  (electronic). \MR{1943381 (2005a:32024)}

\bibitem{wendl}
Chris Wendl, \emph{Finite energy foliations and surgery on transverse links},
  ProQuest LLC, Ann Arbor, MI, 2005, Thesis (Ph.D.)--New York University.
  \MR{2706831}

\end{thebibliography}

\end{document}